\newtheorem{theorem}{Theorem}[section]
\newtheorem{lemma}[theorem]{Lemma}
\newtheorem{proposition}[theorem]{Proposition}
\newtheorem{corollary}[theorem]{Corollary}
\theoremstyle{definition}
\newtheorem{example}[theorem]{Example}
\newtheorem{notation}[theorem]{Notation}
\newtheorem{convention}[theorem]{Convention}
\theoremstyle{remark}
\newtheorem{remark}[theorem]{Remark}
\newtheorem{chunk}[theorem]{}
\numberwithin{equation}{section}
\newcommand{\kk}{\Bbbk}
\newcommand{\del}{\partial}
\newcommand{\xra}{\xrightarrow}
\newcommand{\Hom}{\operatorname{Hom}}
\newcommand{\Tor}{\operatorname{Tor}}
\newcommand{\q}{\mathbf{q}}
\newcommand{\m}{\mathfrak{m}}
\newcommand{\pf}[2]{\mathrm{pf}_{\overline{#1}}(#2)}
\newcommand{\sgn}{\textnormal{sgn}}
\newcommand{\ee}{\mathsf{e}}
\newcommand{\ff}{\mathsf{f}}
\renewcommand{\gg}{\mathsf{g}}
\newcommand{\ov}{\overline}
\newcommand{\rank}{\mathrm{rank}}
\DeclareSymbolFont{largesymbolsstix}{LS2}{stixex}{m}{n}
\DeclareMathDelimiter{\lbrbrak}{\mathopen}{largesymbolsstix}{"EE}{largesymbolsstix}{"14}
\DeclareMathDelimiter{\rbrbrak}{\mathclose}{largesymbolsstix}{"EF}{largesymbolsstix}{"15}
\crefname{diagram}{diagram}{diagrams}
\crefname{diagram}{Diagram}{Diagrams}
\begin{document}

\title[The Tor algebra of trimmings of Gorenstein ideals]{The Tor algebra of trimmings of Gorenstein ideals}

\author[L.~Ferraro]{Luigi Ferraro}
\address{School of Mathematical and Statistical Sciences,
University of Texas Rio Grande Valley, Edinburg, TX 78539, U.S.A.}
\email{luigi.ferraro@utrgv.edu}

\author[A.~Hardesty]{Alexis Hardesty}
\address{Division of Mathematics,
Texas Woman's University, Denton, TX 76204, U.S.A.}
\email{ahardesty1@twu.edu}

\keywords{pfaffian, trimming, Gorenstein, DG algebra, free resolution}
\subjclass[2020]{13D02, 13D07, 13H10}

\begin{abstract}
Let $(R,\m,\kk)$ be a regular local ring of dimension 3. Let $I$ be a Gorenstein ideal of $R$ of grade 3. Buchsbaum and Eisenbud proved that there is a skew-symmetric matrix of odd size such that $I$ is generated by the sub-maximal pfaffians of this matrix. Let $J$ be the ideal obtained by multiplying some of the pfaffian generators of $I$ by $\m$; we say that $J$ is a trimming of $I$. Building on a recent paper of Vandebogert, we construct an explicit free resolution of $R/J$ and compute a partial DG algebra structure on this resolution. We provide the full DG algebra structure in the appendix. We use the products on this resolution to study the Tor algebra of such trimmed ideals and we use the information obtained to prove that recent conjectures of Christensen, Veliche and Weyman on ideals of class $\mathbf{G}$ hold true in our context. Furthermore, we address the realizability question for ideals of class $\mathbf{G}$.
\end{abstract}

\maketitle

\section{Introduction}

Let $(R,\m,\kk)$ be a regular local ring and $I$ a perfect ideal of grade $3$. Buchsbaum and Eisenbud showed in \cite{BuchEisen} that a minimal free resolution $F_\bullet$ of $R/I$ has a differential graded (DG) algebra structure. This DG algebra induces a graded algebra structure on $\Tor^R_\bullet(R/I,\kk)\colonequals\mathrm{H}_\bullet(F_\bullet\otimes_R\kk)$. By results of Weyman \cite{Weyman} and of Avramov, Kustin, and Miller \cite{AKM}, this graded algebra structure does not depend on the DG algebra structure on $F_\bullet$ and may be classified into one of five distinct classes. 

Of particular interest to us are ideals whose Tor algebra is of class $\mathbf{G}(r)$, where $r$ is a parameter defined as the rank of the multiplication map
\[
\Tor^R_2(R/I,\kk)\rightarrow\Hom_\kk(\Tor^R_1(R/I,\kk),\Tor^R_3(R/I,\kk));
\]
see \cite[1.1]{Avramov2012}. Every Gorenstein ideal of grade 3 that is not a complete intersection ideal is of class $\mathbf{G}(r)$. Moreover, by a result of Buchsbaum and Eisenbud \cite{BuchEisen}, every Gorenstein ideal of grade 3 is generated by the sub-maximal pfaffians of a skew-symmetric matrix of odd size. It was conjectured by Avramov in \cite{Avramov2012} that every ideal of class $\mathbf{G}$ is Gorenstein. A counterexample to this conjecture was found by Christensen and Veliche in \cite{CVExamples} and later by Christensen, Veliche and Weyman in \cite{trimming}. The counterexample in \cite{trimming} was constructed by a process referred to as ``trimming", which consists of replacing a generator $g$ of the ideal by $\m g$.

In \cite{Linkage}, Christensen, Veliche and Weyman make the following conjectures: let $J$ be a grade 3 perfect ideal of class $\mathbf{G}(r)$ that is not Gorenstein, minimally generated by $m$ elements, then
\begin{enumerate}
\item if the type of $J$ is 2, then $2\leq r\leq m-5$ or $r=m-3$.
\item If the type of $J$ is at least 3, then $2\leq r\leq m-4$.
\end{enumerate}
Using results from \cite{trimming}, these conjectures can be refined for ideals $J$ of class $\mathbf{G}(r)$ arising by trimming a Gorenstein ideal in a regular local ring of dimension 3 (see \Cref{rem:conj}). In \Cref{cor:conj}, we prove these conjectures hold for ideals obtained by trimming the pfaffian generators of a Gorenstein ideal in a regular local ring of dimension 3.

The main tool used in this paper to prove the conjectures mentioned above is a DG algebra resolution of the trimmed ideal $J$. This DG algebra resolution was originally constructed by Vandebogert in \cite{DGTrimmed} in a more general setting. The product formulae given in \cite[Theorem 3.2]{DGTrimmed} are defined implicitly by lifting certain cycles through the differential of the resolution. In this paper we make use of Vandebogert's construction and of pfaffian identities proved by Knuth \cite{Knuth} to give more explicit products on the resolution constructed by Vandebogert.

The paper is organized as follows. In Section 2 we recall background information on perfect ideals of grade 3 and their classification, on DG algebra resolutions of Gorenstein ideals, and on pfaffians and their identities. In Section 3 we apply Vandebogert's construction to give a more explicit free resolution of an ideal obtained by trimming pfaffian generators of a Gorenstein ideal of grade 3. In Section 4 we use the product table provided by Vandebogert and the pfaffian identities provided by Knuth to give a more explicit (partial) DG algebra structure on the free resolution constructed in Section 3. In Section 5 we use the resolution and the products constructed in Section 4 to study the Tor algebra of ideals arising by trimming the pfaffian generators of a Gorenstein ideal of grade 3. This section contains the main result of the paper, i.e., \Cref{thm:TrimClass}. We use these results to prove that the conjectures of Christensen, Veliche and Weyman hold true in our context. Finally, in Section 6 we address the realizability question for ideals of class $\mathbf{G}(r)$, namely, if one fixes the Betti numbers of $R/I$, what values of $r$ are realized by an ideal with those Betti numbers and of class $\mathbf{G}(r)$? Novel examples are provided in \Cref{prop:OddEx} and \Cref{prop:EvenEx}. The reader may be interested in knowing the full DG algebra structure on the resolution of a trimming of a Gorenstein ideal, which is computed in the Appendix. 

\section{Background and Notation}

Throughout the paper $(R,\m,\kk)$ denotes a regular local ring of dimension 3. The results of this paper also hold true if $R$ is a three dimensional standard graded polynomial ring with coefficients in $\kk$.

\begin{chunk}\label{chk:ProdTables}
Let $I$ be a perfect ideal of $R$ of grade 3. We say that $I$ has \emph{format} $(1,m,m+n-1,n)$ if the minimal free resolution $F_\bullet$ of $R/I$ is of the form
\[
F_\bullet:0\longrightarrow R^n\longrightarrow R^{m+n-1}\longrightarrow R^m\longrightarrow R\longrightarrow0.
\]
We fix bases
\begin{equation}\label{basis}
\{e_i\}_{i=1,\ldots,m},\quad\{f_i\}_{i=1,\ldots,m+n-1},\quad\{g_i\}_{i=1,\ldots,n}
\end{equation}
of $F_1,F_2$ and $F_3$ respectively.

It was proved in \cite{BuchEisen} that a free resolution of length 3 always admits a DG algebra structure. The DG algebra structure on $F_\bullet$ induces a graded $\kk$-algebra structure on $\mathrm{H}(F_\bullet\otimes_R\kk)$, which is denoted by $\Tor^R(R/I,\kk)$. 

It was shown in \cite{AKM} that, even though the DG algebra structure of $F_\bullet$ is not unique, the algebra structure on $\Tor^R(R/I,\kk)$ is unique. Moreover, the bases in \eqref{basis} can be chosen such that the $\kk$-bases induced in $\Tor^R(R/I,\kk)$, which we denote by
\[
\{\ee_i\}_{i=1,\ldots,m},\quad\{\ff_i\}_{i=1,\ldots,m+n-1},\quad\{\gg_i\}_{i=1,\ldots,n},
\]
have products described by one of the multiplicative structures in the product table below:
\begin{center}
\begin{tabular}{rlrl}
    $\mathbf{C}(3)$ & $\ee_1 \ee_2 = \ff_3$, $\ee_2 \ee_3 = \ff_1$, $\ee_3 \ee_1 = \ff_2$ & $\ee_i \ff_i = \gg_1$ & for $1\leq i\leq 3$\\
    $\mathbf{T}$ & $\ee_1 \ee_2 = \ff_3$, $\ee_2 \ee_3 = \ff_1$, $\ee_3 \ee_1 = \ff_2$\\
    $\mathbf{B}$ & $\ee_1 \ee_2 = \ff_3$ & $\ee_i \ff_i = \gg_1$ & for $1\leq i \leq 2$\\
    $\mathbf{G}(r)$ &  & $\ee_i \ff_i = \gg_1$ & for $1\leq i\leq r$\\
    $\mathbf{H}(p,q)$ & $\ee_i \ee_{p+1} = \ff_i$ for $1\leq i\leq p$ & $\ee_{p+1}\ff_{p+j} = \gg_j$ & for $1\leq j\leq q$,
\end{tabular}
\end{center}
with $r,p,q$ nonnegative integers and $r\geq2$. The products not listed are either zero or can be deduced from the ones listed by graded-commutativity. Depending on the product structure of $\Tor^R(R/I,\kk)$, we say that the ideal $I$ belongs to one of the following classes: $\mathbf{C}(3)$, $\mathbf{T}$, $\mathbf{B}$, $\mathbf{G}(r)$, and $\mathbf{H}(p,q)$.
\end{chunk}

\begin{chunk}
Let $T=(T_{i,j})$ be a $m\times m$ skew-symmetric matrix with entries in $R$ and with zeros on the diagonal. We define a function $\mathcal{P}$ from the set of words in the letters $\{1,\ldots,m\}$ to $R$ as follows
\[
\mathcal{P}[i,j] \colonequals T_{i,j},\quad \mathrm{for}\;i,j\in\{1,\ldots,m\}
\]
\[
\mathcal{P}[i_1\ldots i_n] \colonequals
\begin{cases}
\sum\mathrm{sgn}\binom{i_1\ldots i_{2k}}{j_1\cdots j_{2k}}\mathcal{P}[j_1j_2]\cdots\mathcal{P}[j_{2k-1}j_{2k}]&\mathrm{if}\;n=2k\\
0&\mathrm{otherwise},\end{cases}
\]
where $\mathrm{sgn}$ denotes the sign of a permutation and the sum is taken over all the partitions of $\{i_1,\ldots,i_{2k}\}$ in $k$ subsets of size 2.
By convention we set $\mathcal{P}[\emptyset]=1$ and we set $\mathcal{P}[w]=0$ if $w$ has a repeated letter. 

If $\{i_1,\ldots,i_n\}\subseteq\{1,\ldots,m\}$ with $i_1<\cdots<i_n$, we define the \emph{pfaffian} of the submatrix of $T$ obtained by only considering the rows and columns in positions $i_1,\ldots,i_n$ to be
\[
\mathrm{pf}_{i_1,\ldots,i_n}(T)\colonequals\mathcal{P}[i_1,\ldots,i_n].
\]
We also set a notation for the pfaffian of the submatrix of $T$ obtained by removing the rows and columns in positions $j_1,\ldots,j_n$
\[
\pf{j_1,\ldots,j_n}{T}\colonequals\mathrm{pf}_{\{1,\ldots,m\}\backslash\{j_1,\ldots,j_n\}}(T),
\]
and we set this quantity to be zero if the indices $j_1,\ldots,j_n$ are not distinct.

\end{chunk}

\begin{chunk}\label{ch:GorRes}
Let $I$ be a Gorenstein ideal of $R$ of grade 3 that is not a complete intersection. It was shown in \cite{BuchEisen} that there is a skew-symmetric matrix $T$ of odd size $m$, with zeros on the diagonal and entries in $\m$, such that $I=((-1)^{i+1}\pf{i}{T})_{i=1,\ldots,m}$. Moreover, a minimal free resolution of $R/I$ is given by 
\[
F_\bullet:0\rightarrow R\xra{D_3}R^m\xra{D_2}R^m\xra{D_1}R\rightarrow0,
\]
where $D_1=\begin{pmatrix}\pf{1}{T}&\cdots&(-1)^{i+1}\pf{i}{T}&\cdots&\pf{m}{T}\end{pmatrix}$, $D_2=T$ and $D_3=D_1^*$, the transpose of $D_1$.

A DG algebra structure on this resolution has been found in \cite{small}. Let $\{e_i\}_{i=1,\ldots,m},\{f_i\}_{i=1,\ldots,m}$ and $\{g\}$ be bases in degrees $1,2$ and $3$, respectively, of the resolution $F_\bullet$ with the differentials described by the matrices $D_1,D_2,D_3$ above. We denote by $(m)$ the sequence $(1\;2\;\cdots\;m)$, and by $(m)\backslash \{i_1,\ldots,i_n\}$, the sequence $(m)$ with the elements $i_1,\ldots,i_n$ removed. 
A product on $F_\bullet$ is given by the following formulae
\begin{equation}\label{eq:ProdEE}
e_ie_j=\sum_{r=1}^m(-1)^{i+1}\mathrm{sgn}\binom{(m)\backslash \{i\}}{jr(m)\backslash\{i,j,r\}}\pf{i,j,r}{T}f_r,\quad\mathrm{for}\;i<j,
\end{equation}
\[
e_if_j=\delta_{i,j}g,
\]
where $\delta_{i,j}$ is the Kronecker delta.
\end{chunk}

\begin{chunk}
For the convenience of the reader we record properties of pfaffians. We first recall the formula in \cite[(A.1.5)]{aci}, if $\beta$ is a word in $1,\ldots,m$ and $b$ an element of $\{1,\ldots,m\}$, then
\begin{equation}\label{eq:A15}
\mathcal{P}[\beta]=\sum_{r\in\beta}\mathrm{sgn}\binom{\beta}{br\beta\backslash\{b,r\}}\mathcal{P}[\beta\backslash\{b,r\}]\mathcal{P}[br].
\end{equation}
Let $1\leq i,j\leq m$ with $i\neq j$, by taking $\beta=(m)\backslash\{i\}$ and $b=j$ in \eqref{eq:A15} one gets
\begin{equation}\label{eq:singlepf}
\pf{i}{T}=\sum_{r=1}^m\mathrm{sgn}\binom{(m)\backslash\{i\}}{jr(m)\backslash\{i,j,r\}}T_{j,r}\pf{i,j,r}{T}.
\end{equation}

Let $1\leq i,j,k\leq m$ be distinct indices. Set $\beta = k(m)\backslash\{i,j\}$. We point out that $k$ appears twice in $\beta$. In terms of permutations signs, we treat the two appearances of $k$ as being distinct and set $b$ to be the first $k$ in $\beta$. Applying \eqref{eq:A15} one gets
\begin{equation}\label{eq:old3pf0}
    \sum_{r=1}^m  \mathrm{sgn}\binom{k(m)\backslash\{i,j\}}{kr(m)\backslash\{i,j,r\}}T_{k,r} \pf{i,j,r}{T} =0.
\end{equation}

Let $1\leq i,j,r,k\leq m$ be distinct indices. By taking $\beta = (m)\backslash\{i,j,r\}$ and $b=k$ in \eqref{eq:A15} one gets
\begin{equation}\label{eq:triplepf}
    \pf{i,j,r}{T} = \sum_{h=1}^m \mathrm{sgn}\binom{(m)\backslash\{i,j,r\}}{kh(m)\backslash\{i,j,r,k,h\}} T_{k,h} \pf{i,j,r,h,k}{T}.
\end{equation}

\end{chunk}

\begin{chunk}
Let $\sigma$ be a permutation in the symmetric group $S_n$. We consider the matrix
\[
\begin{pmatrix}
1&2&\cdots&n\\
1&2&\cdots&n
\end{pmatrix},
\]
and we draw a line connecting $i$ in the first row with $\sigma(i)$ in the second row for each $i$. The number of pairs of lines intersecting is called the \emph{crossing number} of $\sigma$, and we denote it by $c(\sigma)$. It is well known, see for example \cite[Pages 27-29]{Enum}, that
\[
\mathrm{sgn}(\sigma)=(-1)^{c(\sigma)}.
\]
\end{chunk}

\begin{chunk}
We recall that the \emph{unit step function}, also known as the \emph{Heaviside step function}, is the function $\theta:\mathbb{R}\backslash\{0\}\rightarrow\{0,1\}$ defined as
\[
\theta(x)=\begin{cases}
0\quad\mathrm{if}\;x<0\\
1\quad\mathrm{if}\;x>0.
\end{cases}
\]
Let $m$ be a positive integer and let $i,j,r$ be distinct elements of $\{1,\ldots,m\}$. We introduce the following notation
\[
\sigma_{i,j,r}\colonequals (-1)^{i+1}\sgn\binom{(m)\backslash{\{i\}}}{jr(m)\backslash{\{i,j,r\}}}.
\]
If $i,j,r$ are not distinct, then we set $\sigma_{i,j,r}=0$. Using the crossing number of the permutation above, it is easy to check that $\sigma_{i,j,r}$ can be written as


\begin{equation}\label{eq:Sigma3Theta}
\sigma_{i,j,r} = (-1)^{i+j+r+1+\theta(r-i)+\theta(r-j)+\theta(j-i)}.
\end{equation}
The following relations satisfied by $\sigma_{i,j,r}$ follow from the previous expression; we record them here for later reference
\begin{equation}\label{eq:Sig3Rel}
\sigma_{i,j,r} = -\sigma_{j,i,r},\quad\sigma_{i,j,r} = \sigma_{j,r,i} = \sigma_{r,i,j}.
\end{equation}
\end{chunk}

\begin{lemma}
Let $i,j,k$ be distinct elements of $\{1,\ldots,m\}$. Then
\begin{equation}\label{eq:3pf0}
    \sum_{r=1}^m  \sigma_{i,j,r}T_{k,r} \pf{i,j,r}{T} =0.
\end{equation}
\end{lemma}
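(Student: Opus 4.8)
The statement to prove is essentially a repackaging of \eqref{eq:old3pf0} using the notation $\sigma_{i,j,r}$. The plan is to compare the sign appearing in \eqref{eq:old3pf0}, namely $\mathrm{sgn}\binom{k(m)\backslash\{i,j\}}{kr(m)\backslash\{i,j,r\}}$, with $\sigma_{i,j,r}$ and show that they differ by a fixed sign independent of $r$; then the desired identity follows immediately from \eqref{eq:old3pf0} after pulling that constant sign out of the sum.

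First I would spell out the permutation implicit in \eqref{eq:old3pf0}: its top row is the word $k(m)\backslash\{i,j\}$ (with the distinguished first letter $k$) and its bottom row is $kr(m)\backslash\{i,j,r\}$. Since the first letter $k$ is sent to the first letter $k$, that line contributes no crossings, and the sign equals the sign of the permutation sending $(m)\backslash\{i,j\}$ to $r(m)\backslash\{i,j,r\}$. Next I would relate this to the permutation sending $(m)\backslash\{i\}$ to $jr(m)\backslash\{i,j,r\}$ that defines $\sigma_{i,j,r}$ (up to the factor $(-1)^{i+1}$). Inserting the letter $j$ at the front of the bottom word and at its original position $j$ in the top word introduces a predictable number of crossings: $j$ must cross every letter in $(m)\backslash\{i\}$ that is less than $j$ and lies in the portion between positions — counting these gives a sign depending only on $i,j$ (roughly $(-1)^{\#\{\text{letters} < j\}}$), not on $r$. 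So $\mathrm{sgn}\binom{k(m)\backslash\{i,j\}}{kr(m)\backslash\{i,j,r\}} = \pm \sigma_{i,j,r}$ with the $\pm$ constant in $r$.

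Alternatively — and this is probably cleaner to write — I would use the closed formula \eqref{eq:Sigma3Theta} for $\sigma_{i,j,r}$ and a parallel closed formula for the sign in \eqref{eq:old3pf0} obtained by the same crossing-number bookkeeping, and simply observe that their ratio is $(-1)^{(\text{something in } i,j)}$. Substituting $\sigma_{i,j,r} = (\pm1)\,\mathrm{sgn}\binom{k(m)\backslash\{i,j\}}{kr(m)\backslash\{i,j,r\}}$ into the left-hand side of \eqref{eq:3pf0} and factoring out the constant sign turns the sum into $(\pm1)$ times the sum in \eqref{eq:old3pf0}, which is $0$.

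The main obstacle is purely bookkeeping: carefully tracking how many crossings are gained or lost when one passes from the word $k(m)\backslash\{i,j\}$ to $(m)\backslash\{i\}$ (reinserting $j$) on the top and from $kr(m)\backslash\{i,j,r\}$ to $jr(m)\backslash\{i,j,r\}$ (replacing the leading $k$ by $j$, or rather relating the two) on the bottom, and confirming that the net count does not depend on $r$. One must be slightly careful that the position of $r$ in the bottom word is the same in both permutations (it sits just after the leading letter in each case), so the line emanating from $r$ contributes the same crossings on both sides and cancels out of the ratio; everything that survives involves only $i$ and $j$. Once that is checked, the result is one line.
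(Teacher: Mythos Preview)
Your proposal is correct and follows essentially the same route as the paper: the paper computes the sign in \eqref{eq:old3pf0} via crossing numbers as $(-1)^{r+1+\theta(r-i)+\theta(r-j)}$, compares it with \eqref{eq:Sigma3Theta} to see that it equals $(-1)^{i+j+\theta(j-i)}\sigma_{i,j,r}$, and then multiplies \eqref{eq:old3pf0} by this $r$-independent constant. Your ``cleaner'' alternative is exactly this, and your first approach (tracking the insertion of $j$) is just a more combinatorial way of arriving at the same constant.
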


\begin{proof}
Using crossing numbers, the sign in \eqref{eq:old3pf0} is $(-1)^{r+1+\theta(r-i)+\theta(r-j)}$, which is $(-1)^{i+j+\theta(j-i)}\sigma_{i,j,r}$. Therefore, by multiplying \eqref{eq:old3pf0} by $(-1)^{i+j+\theta(j-i)}$ one gets \eqref{eq:3pf0}.
\end{proof}

We introduce the following notation for the sign in \eqref{eq:triplepf} 
\[
\sigma_{i,j,r,h,k} \colonequals \sgn \binom{(m)\backslash{\{i,j,r\}}}{kh(m)\backslash{\{i,j,r,k,h\}}}.
\]
Again, if $i,j,r,h,k$ are not distinct, then we set $\sigma_{i,j,r,h,k}=0$. Using the crossing number of the permutation above one can check that the following equality holds
\begin{align}\label{eq:Sigma5Theta}
    \sigma_{i,j,r,h,k} &= (-1)^{h+k+1+\theta(k-i)+\theta(k-j)+\theta(k-r)+\theta(k-h)+\theta(h-i)+\theta(h-j)+\theta(h-r)}.
\end{align}
We point out that $\sigma_{i,j,r,h,k}$ is independent of the order of $i,j,r$.

\begin{lemma}
Let $i,h,s,k,j\in\{1,\ldots,m\}$ with $i\neq j$, $h\neq j$, $s\neq j$, and $k\neq j$. Then
\begin{equation}\label{eq:5pf0}
\sum_{r=1}^m \sigma_{i,r,h} \sigma_{i,r,h,s,k} T_{j,r} \pf{i,r,h,s,k}{T}=0.
\end{equation}
\end{lemma}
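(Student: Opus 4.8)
The plan is to reproduce, one level up, the mechanism behind \eqref{eq:3pf0}: produce a pfaffian identity of the shape $\sum_{r=1}^{m}(\text{sign}_r)\,T_{j,r}\,\pf{i,r,h,s,k}{T}=0$ directly from the expansion formula \eqref{eq:A15} applied to a word with a repeated letter, and then reconcile $(\text{sign}_r)$ with $\sigma_{i,r,h}\,\sigma_{i,r,h,s,k}$ up to a sign that does not depend on $r$. First I would dispose of the degenerate cases: if $i=h$ then $\sigma_{i,r,h}=0$ for every $r$, while if any other two of $i,h,s,k$ coincide then $\sigma_{i,r,h,s,k}=0$ for every $r$; in either situation the sum vanishes term by term. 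Since moreover $j\notin\{i,h,s,k\}$ by hypothesis, we may assume $i,h,s,k,j$ are pairwise distinct (so in particular $m\ge 5$).

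To manufacture the identity, consider the word $\beta\colonequals j\,\big((m)\backslash\{i,h,s,k\}\big)$, namely $j$ prepended to the increasing enumeration of $\{1,\dots,m\}\setminus\{i,h,s,k\}$. Since $j$ lies in that set, the letter $j$ occurs twice in $\beta$; as in the derivation of \eqref{eq:old3pf0} we treat the two occurrences as distinct and take $b$ to be the prepended one. Because $\beta$ has a repeated letter, $\mathcal{P}[\beta]=0$, so \eqref{eq:A15} gives
\[
0=\sum_{r\in\beta}\sgn\binom{\beta}{jr\,\beta\backslash\{j,r\}}\,\mathcal{P}[\beta\backslash\{j,r\}]\,T_{j,r}.
\]
The summand in which $r$ is the second occurrence of $j$ vanishes because $T_{j,j}=0$; for $r\in(m)\backslash\{i,h,s,k,j\}$ the word $\beta\backslash\{j,r\}$ is the increasing enumeration of $\{1,\dots,m\}\setminus\{i,h,s,k,r\}$, hence $\mathcal{P}[\beta\backslash\{j,r\}]=\pf{i,r,h,s,k}{T}$. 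Adjoining the (vanishing) terms $r=j$ (again $T_{j,j}=0$) and $r\in\{i,h,s,k\}$ (where $\pf{i,r,h,s,k}{T}=0$ by the repeated-index convention) extends the sum to all of $\{1,\dots,m\}$:
\[
0=\sum_{r=1}^{m}\sgn\binom{\beta}{jr\,\beta\backslash\{j,r\}}\,T_{j,r}\,\pf{i,r,h,s,k}{T}.
\]

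The remaining, and genuinely delicate, step is the sign bookkeeping. Using the crossing-number description, the permutation occurring in that sign fixes $b$ and cyclically carries $r$ to the front of the remaining block, so its sign is $(-1)^{q-1}$, where $q=r-\theta(r-i)-\theta(r-h)-\theta(r-s)-\theta(r-k)$ is the position of $r$ in the increasing enumeration of $\{1,\dots,m\}\setminus\{i,h,s,k\}$; thus
\[
\sgn\binom{\beta}{jr\,\beta\backslash\{j,r\}}=(-1)^{\,r-1+\theta(r-i)+\theta(r-h)+\theta(r-s)+\theta(r-k)}.
\]
Writing out $\sigma_{i,r,h}$ and $\sigma_{i,r,h,s,k}$ via \eqref{eq:Sigma3Theta} and \eqref{eq:Sigma5Theta} and using repeatedly that $\theta(x)+\theta(-x)=1$, one checks that
\[
\sgn\binom{\beta}{jr\,\beta\backslash\{j,r\}}=\varepsilon\cdot\sigma_{i,r,h}\,\sigma_{i,r,h,s,k},\qquad \varepsilon=(-1)^{\,i+h+s+k+\theta(h-i)+\theta(s-i)+\theta(s-h)+\theta(k-i)+\theta(k-h)+\theta(k-s)},
\]
and $\varepsilon$ is independent of $r$. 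Multiplying the last displayed identity by $\varepsilon$ produces exactly \eqref{eq:5pf0}. I expect the only real obstacle to be the verification that the $r$-dependent contributions cancel modulo $2$ — the point being that the three cross-terms $\theta(r-h)-\theta(h-r)$, $\theta(r-s)-\theta(s-r)$, $\theta(r-k)-\theta(k-r)$ are each odd, so their total is odd and combines with the linear term $r$ to disappear; everything else is a direct transcription of the proof of \eqref{eq:3pf0}.
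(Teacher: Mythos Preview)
Your proof is correct and follows essentially the same route as the paper's: apply \eqref{eq:A15} to the word $\beta=j\,(m)\backslash\{i,h,s,k\}$ with $b$ the prepended copy of $j$, compute the sign via the crossing number as $(-1)^{r+1+\theta(r-i)+\theta(r-h)+\theta(r-s)+\theta(r-k)}$, and then multiply through by the $r$-independent factor you call $\varepsilon$ (the paper uses the identical constant). Your explicit handling of the degenerate cases where two of $i,h,s,k$ coincide is a small addition the paper leaves implicit; your closing heuristic about a ``linear term $r$'' is slightly off (the leftover is the constant $+1$, not $r$, and $1+1+1+1\equiv 0$), but your stated $\varepsilon$ is correct and the verification is routine.
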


\begin{proof}
Let $\beta=j(m)\backslash\{i,h,s,k\}$. Applying \eqref{eq:A15} with $b$ equals to the first appearance of $j$ in $\beta$ yields
\begin{align*}
    0 &= \mathcal{P}[\beta]\quad \textnormal{ since $\beta$ has a repeated $j$}\\
    &= \sum_{r=1}^m \sgn\left(\begin{smallmatrix}
    \beta\\
    jr\beta\setminus{\{j,r\}}
    \end{smallmatrix}\right) T_{j,r} \mathcal{P}[\beta\setminus{\{j,r\}}]\quad \textnormal{ by}\;\eqref{eq:A15}\\
    &= \sum_{r=1}^m \sgn\left(\begin{smallmatrix}
    j(m)\setminus{\{i,h,s,k\}}\\
    jr(m)\setminus{\{i,h,s,k,r\}}
    \end{smallmatrix}\right) T_{j,r} \pf{i,h,s,k,r}{T}\\
    &= \sum_{r=1}^m \sgn\left(\begin{smallmatrix}
    (m)\setminus{\{i,h,s,k\}}\\
    r(m)\setminus{\{i,h,s,k,r\}}
    \end{smallmatrix}\right) T_{j,r} \pf{i,h,s,k,r}{T}\\
    &= \sum_{r=1}^m (-1)^{r+1+\theta(r-i)+\theta(r-h)+\theta(r-s)+\theta(r-k)} T_{j,r} \pf{i,h,s,k,r}{T}\quad\mathrm{using\;the\;crossing\;number}\\
    &= \sum_{r=1}^m (-1)^{r+1+\theta(r-i)+\theta(r-h)+\theta(r-s)+\theta(r-k)} T_{j,r} \pf{i,h,s,k,r}{T}
\end{align*}
When multiplying the sign in the last equality by $(-1)^{i+h+s+k+\theta(h-i)+\theta(k-i)+\theta(k-h)+\theta(k-s)+\theta(s-i)+\theta(s-h)},$ we obtain the sign $\sigma_{i,r,h} \sigma_{i,r,h,s,k}$. Therefore, multiplying the last equality by this power of $-1$ gives \eqref{eq:5pf0}.
\end{proof}

\section{A free resolution of a trimmed pfaffian ideal}

We fix a generating set for the maximal ideal $\m = (z_1,z_2,z_3)$. Let $I$ be a Gorenstein ideal of $R$ of grade 3 that is not a complete intersection. It has been proved in \cite[Theorem 2.1]{BuchEisen} that there is a odd sized skew-symmetric $m\times m$ matrix $T$ with entries in $\m$ such that $I=(y_1,\ldots,y_m)$ where $y_i\colonequals(-1)^{i+1}\pf{i}{T}$ for $i=1,\ldots,m$. Let $t$ be an integer such that $1\leq t\leq m$, and let $J\colonequals y_1\m+\cdots y_t\m+(y_{t+1},\ldots,y_m)$; following the terminology set in \cite{trimming} we say that $J$ is obtained from $I$ by trimming the first $t$ generators of $I$. The aim for this section is to obtain a free resolution of $R/J$.

In \cite[Theorem 2.6]{DGTrimmed}, Vandebogert constructs a free resolution of a trimmed ideal $J$ starting from the free resolution of a (general) ideal $R/I$. We apply his construction to the resolution of a Gorenstein ideal of grade 3 given in \ref{ch:GorRes}.

Let $(G_\bullet,\delta_\bullet)$ be the Koszul resolution of $k$ over $R$, with
\[
G_1=Ru_1\oplus Ru_2\oplus Ru_3,\quad G_2=Rv_{1,2}\oplus Rv_{1,3}\oplus Rv_{2,3},\quad G_3=Rw,
\]
\[
\delta_1=\begin{pmatrix}z_1&z_2&z_3\end{pmatrix},\quad\delta_2=\begin{pmatrix}-z_2&-z_3&0\\z_1&0&-z_3\\0&z_1&z_2\end{pmatrix},\quad\delta_3=\begin{pmatrix}z_3\\-z_2\\z_1\end{pmatrix}.
\]
We will need $t$ copies of $G_\bullet$, which we will denote by $(G_\bullet^k,\delta^k_\bullet)$ for $k=1,\ldots, t$. We denote the generators of $G_1^k$ by $u_1^k,u_2^k,u_3^k$, and similarly for $G_2^k,G_3^k$. We set $v^k_{\beta,\alpha}=-v^k_{\alpha,\beta}$ for $\alpha<\beta$ and $\alpha,\beta\in\{1,2,3\}$.

Let $c_{i,j,l}$, for $i,j=1,\ldots,m$ and $l=1,2,3$, be elements of $R$ satisfying the following equality 
\begin{equation}\label{eq:Tji}
T_{j,i} = \sum_{l=1}^3 c_{i,j,l}z_l.
\end{equation}

We define constants that will be used in the statement of the main theorem of this section. Let $\alpha,\beta\in\{1,2,3\}$ with $\alpha<\beta$, and let $k=1,\ldots,t$, then we define
\begin{equation}\label{eq:d^1}
d^k_{\alpha,\beta} \colonequals \sum_{i=1}^m\sum_{r=1}^m \sigma_{i,k,r} c_{i,k,\beta} c_{r,k,\alpha} \pf{i,k,r}{T}. 
\end{equation}

We use the notation set in \ref{ch:GorRes} for the minimal free resolution $(F_\bullet,D_\bullet)$ of $R/I$.

We define $F_1'$ by the decomposition $F_1 = (\oplus_{i=1}^t Re_i) \oplus  F_1'$. Similarly, we define $D_2'$ by the decomposition 
\[
D_2\in\Hom_R(F_2,F_1) = \left(\oplus_{i=1}^t \Hom_R(F_2, Re_i)\right) \oplus \Hom_R(F_2,F_1')
\]
by setting $D_2 = D_0^1 + D_0^2 + \dots + D_0^t + D_2'$.

We denote the composition 
\[
\begin{tikzpicture}[baseline=(current  bounding  box.center)]
 \matrix (m) [matrix of math nodes,row sep=3em,column sep=4em,minimum width=2em] {
F_2& Re_i & R\\};
\path[->] (m-1-1) edge  node[above]{$D_0^i$} (m-1-2);
\path[->] (m-1-2) edge  node[above]{} (m-1-3);
\end{tikzpicture}
\]
as ${D_0^i}'$ for $1\leq i \leq t$, where the second map sends $e_i\mapsto 1$.

\begin{theorem}\label{thm:res} Using the notation set so far, a free resolution of $R/J$ is given by
\begin{center}

\begin{tikzpicture}[baseline=(current  bounding  box.center)]
 \matrix (m) [matrix of math nodes,row sep=3em,column sep=4em,minimum width=2em] {
0&F_3\oplus(\oplus_{k=1}^tG_3^k)&F_2\oplus(\oplus_{k=1}^tG_2^k)&F_1^\prime\oplus(\oplus_{k=1}^tG_1^k)&R\\};
\path[->] (m-1-1) edge (m-1-2);
\path[->] (m-1-2) edge  node[above]{$\partial_3$} (m-1-3);
\path[->] (m-1-3) edge  node[above]{$\partial_2$} (m-1-4);
\path[->] (m-1-4) edge  node[above]{$\partial_1$} (m-1-5);
\end{tikzpicture}

$\del_3 = \begin{pmatrix}
D_3 & 0\\
Q_2 & \oplus_{k=1}^t \delta_3^k
\end{pmatrix}$ ,
$\del_2 = \begin{pmatrix}
D_2' & 0\\
-Q_1 & \oplus_{k=1}^t \delta_2^k
\end{pmatrix}$,
$\del_1 = \begin{pmatrix}
D_1|_{F_1} & -\sum_{k=1}^t y_k\delta_1^k
\end{pmatrix}$
\end{center}
where 
\[
Q_1=\begin{pmatrix}q_1^1\\\vdots\\q_1^t\end{pmatrix},\quad Q_2=\begin{pmatrix}q_2^1\\\vdots\\q_2^t\end{pmatrix},
\]
and the maps $q_1^k:F_2\rightarrow G_1^k,q_2^k:F_3\rightarrow G_2^k$ for $k=1,\ldots,t$ are defined by
\[
q_1^k(f_i) = \sum_{l=1}^3 c_{i,k,l}u_l^k,
\]
\[
q_2^k(g) = d^k_{1,2}v^k_{1,2} + d^k_{1,3}v^k_{1,3} + d^k_{2,3}v^k_{2,3}.
\]
\end{theorem}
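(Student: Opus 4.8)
The plan is to verify directly that the proposed complex is a free resolution of $R/J$ by checking three things: (i) that $\partial_1\partial_2 = 0$ and $\partial_2\partial_3 = 0$, so that we indeed have a complex; (ii) that $\operatorname{im}\partial_1 = J$; and (iii) that the complex is acyclic in positive degrees. The cleanest route to (iii) is to invoke Vandebogert's construction in \cite[Theorem 2.6]{DGTrimmed}: his resolution of a trimmed ideal is built from the free resolution $F_\bullet$ of $R/I$ together with the Koszul resolutions $G_\bullet^k$ and certain comparison maps, and it is already known to be a resolution in that generality. So the real work is to identify our explicitly-written maps $\partial_\bullet$ with Vandebogert's maps, which reduces everything to computing the comparison maps $q_1^k$ and $q_2^k$ explicitly for the Gorenstein resolution of \ref{ch:GorRes}.

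Concretely, Vandebogert's $q_1^k \colon F_2 \to G_1^k$ is a chain map lifting the map $F_1 \to R$, $e_i \mapsto \delta_{i,k}$, along the differentials; i.e. it must satisfy $\delta_1^k \circ q_1^k = {D_0^k}' \colon F_2 \to R$. Using $D_2 = T$ and the definition ${D_0^k}'(f_i) = T_{k,i}$ (the $e_k$-component of $D_2(f_i)$, up to sign), the requirement becomes $\delta_1^k(q_1^k(f_i)) = \sum_l c_{i,k,l} z_l = T_{k,i}$, which is exactly \eqref{eq:Tji}. So the formula $q_1^k(f_i) = \sum_{l=1}^3 c_{i,k,l} u_l^k$ is forced, and checking it is a chain map in the remaining degree amounts to lifting $q_1^k$ one step further to get $q_2^k \colon F_3 \to G_2^k$ with $\delta_2^k \circ q_2^k = q_1^k \circ D_3 \colon F_3 \to G_1^k$ (possibly with a sign). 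This is where the pfaffian identities enter: one computes $q_1^k(D_3(g))$ using $D_3 = D_1^*$ and the product/differential formulae, expands $T_{j,i}$ via \eqref{eq:Tji}, and must recognize the resulting element of $G_1^k$ as $\delta_2^k$ applied to the claimed $q_2^k(g) = \sum_{\alpha<\beta} d^k_{\alpha,\beta} v^k_{\alpha,\beta}$. Matching coefficients of $u_l^k$ will produce expressions that collapse, via the single-pfaffian identity \eqref{eq:singlepf} and the vanishing identity \eqref{eq:3pf0}, into the antisymmetrized combinations $d^k_{\alpha,\beta}$ of \eqref{eq:d^1}.

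I expect the main obstacle to be precisely this last verification: showing that $\delta_2^k(q_2^k(g))$ equals $q_1^k(D_3(g))$ on the nose, signs included. The subtlety is that $q_2^k(g)$ is a "symmetrized" object — the $d^k_{\alpha,\beta}$ are double sums over $i,r$ weighted by $\sigma_{i,k,r}$ — while $q_1^k(D_3(g))$ comes out as a single sum, so one has to split each term, use skew-symmetry of $T$ together with \eqref{eq:Tji} (which forces a relation $c_{i,j,l}$ vs.\ $c_{j,i,l}$ only after multiplying by $z_l$, not termwise), and then apply \eqref{eq:3pf0} to kill the diagonal-type contributions. Keeping track of the Koszul signs in $\delta_2^k$ against the $\sigma$ signs is the bookkeeping-heavy heart of the argument. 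Once $q_1^k, q_2^k$ are confirmed to be the comparison maps, parts (i) and (ii) are immediate: $\partial_1\partial_2 = 0$ because the top row gives $D_1 D_2' = 0$ (a consequence of $D_1 D_2 = 0$ together with $D_0^k$ absorbing the trimmed columns) and the Koszul/$q_1^k$ pieces cancel by the chain-map property, $\partial_2\partial_3 = 0$ similarly, and $\operatorname{im}\partial_1 = (y_{t+1},\dots,y_m) + \sum_{k=1}^t y_k(z_1,z_2,z_3) = J$ directly from the block form of $\partial_1$. Acyclicity then follows from \cite[Theorem 2.6]{DGTrimmed}, or alternatively can be checked by a mapping-cone / spectral-sequence argument on the bicomplex assembled from $F_\bullet$ and the $G_\bullet^k$.
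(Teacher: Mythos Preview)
Your approach is essentially the paper's: invoke \cite[Theorem~2.6]{DGTrimmed} after verifying the two commutative diagrams $\delta_1^k\circ q_1^k={D_0^k}'$ and $\delta_2^k\circ q_2^k=q_1^k\circ D_3$, which is exactly what the paper does. One minor point: the $q_2^k$ verification is simpler than you anticipate---the vanishing of the ``diagonal'' $z_l$-coefficient in $q_1^k(D_3(g))$ follows directly from the antisymmetry $\sigma_{i,k,r}=-\sigma_{r,k,i}$ in \eqref{eq:Sig3Rel}, with no need for \eqref{eq:3pf0} or any termwise relation between $c_{i,k,l}$ and $c_{k,i,l}$.
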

Before showing the proof of \Cref{thm:res} we provide an example.

\begin{example}\label{ex:FreeRes}
Let $R=\mathbb{F}_2[x,y,z]$. In the notation set so far $z_1$ would be $x$, $z_2$ would be $y$, and $z_3$ would be $z$. Consider the matrix 
\[
T=\begin{pmatrix} 0 & 0 & 0 & x & z\\ 0 & 0 & x & z & y\\ 0 & x & 0 & y & 0\\ x & z & y & 0 & 0\\ z & y & 0 & 0 & 0 \end{pmatrix}.
\]  
Then 
\[
\pf{1}{T}=y^2, \quad \pf{2}{T}=yz, \quad \pf{3}{T}=xy+z^2, \quad
\pf{4}{T}=xz, \quad \pf{5}{T}=x^2.
\]
Let $I$ be the ideal generated by the pfaffians above. By \cite[(3.3) Proposition]{trimming}, the ideal $I$ is a perfect Gorenstein ideal of grade 3. Let $F_\bullet$ be the minimal free resolution of $R/I$ as defined in \Cref{ch:GorRes}. Let $J$ be the ideal obtained from $I$ by trimming its first generator, i.e.
\[
J=(xy^2,y^3,zy^2,yz,xy+z^2,xz,x^2).
\]
Equation \eqref{eq:Tji} allows us to find the constants $c_{i,j,l}$, for example
\[
x=T_{4,1}=c_{1,4,1}x+c_{1,4,2}y+c_{1,4,3}z,
\]
therefore $c_{1,4,1}=1,c_{1,4,2}=0,c_{1,4,3}=0$. Similarly, one can find all the remaining constants. The only nonzero ones are
\[
c_{1,4,1}=c_{2,3,1}=c_{2,4,2}=c_{3,4,2}=c_{1,5,3}=c_{2,4,3}=1,
\]
and the ones obtained by switching the first two indices. Therefore
\[
Q_1=q_1^1=\begin{pmatrix} c_{1,1,1}&c_{2,1,1}&c_{3,1,1}&c_{4,1,1}&c_{5,1,1}\\
c_{1,1,2}&c_{2,1,2}&c_{3,1,2}&c_{4,1,2}&c_{5,1,2}\\
c_{1,1,3}&c_{2,1,3}&c_{3,1,3}&c_{4,1,3}&c_{5,1,3}\end{pmatrix}=
\begin{pmatrix} 0&0&0&1&0\\0&0&0&0&0\\0&0&0&0&1\end{pmatrix}.
\]
To write down $Q_2$ we need the following constants
\begin{align*}
d_{1,2}^1&=\sum_{i=1}^5\sum_{r=1}^5c_{i,1,2}c_{r,1,1}\pf{i,1,r}{T}=0\\
d_{1,3}^1&=\sum_{i=1}^5\sum_{r=1}^5c_{i,1,3}c_{r,1,1}\pf{i,1,r}{T}=c_{5,1,3}c_{4,1,1}\pf{5,1,4}{T}=x\\
d_{2,3}^1&=\sum_{i=1}^5\sum_{r=1}^5c_{i,1,3}c_{r,1,2}\pf{i,1,r}{T}=0.
\end{align*}
Therefore the map $Q_2$ is
\[
Q_2=\begin{pmatrix}0\\x\\0\end{pmatrix}.
\]
Therefore the resolution of $R/J$ given in the theorem is
\[
\begin{tikzpicture}[baseline=(current  bounding  box.center)]
 \matrix (m) [matrix of math nodes,row sep=3em,column sep=4em,minimum width=2em] {
0&R^2&R^8&R^7&R\\};
\path[->] (m-1-1) edge (m-1-2);
\path[->] (m-1-2) edge  node[above]{$\partial_3$} (m-1-3);
\path[->] (m-1-3) edge  node[above]{$\partial_2$} (m-1-4);
\path[->] (m-1-4) edge  node[above]{$\partial_1$} (m-1-5);
\end{tikzpicture}
\]

where
\begin{align*}
\partial_1=\begin{pmatrix}yz&xy+z^2&xz&x^2&xy^2&y^3&y^2z\end{pmatrix},
\partial_2&=\begin{pmatrix}0&0&x&z&y&0&0&0\\
0&x&0&y&0&0&0&0\\
x&z&y&0&0&0&0&0\\
z&y&0&0&0&0&0&0\\
0&0&0&1&0&y&z&0\\
0&0&0&0&0&x&0&z\\
0&0&0&0&1&0&x&y\end{pmatrix},
\partial_3&=\begin{pmatrix}y^2&0\\
yz&0\\
xy+z^2&0\\
xz&0\\
x^2&0\\
0&z\\
x&y\\
0&x\end{pmatrix}.
\end{align*}
\end{example}

\begin{proof}[Proof of \Cref{thm:res}]
After checking that the choices of $q_1^k$ and $q_2^k$ above give commutative diagrams
\[
\begin{tikzpicture}[baseline=(current  bounding  box.center)]
 \matrix (m) [matrix of math nodes,row sep=3em,column sep=4em,minimum width=2em] {
&F_2\\
G_1^k&R\\};
\path[->] (m-1-2) edge  node[above]{$q_1^k$} (m-2-1);
\path[->] (m-1-2) edge  node[right]{${D_0^k}'$} (m-2-2);
\path[->] (m-2-1) edge  node[above]{$\delta_1^k$} (m-2-2);
\end{tikzpicture}
\quad\quad
\begin{tikzpicture}[baseline=(current  bounding  box.center)]
 \matrix (m) [matrix of math nodes,row sep=3em,column sep=4em,minimum width=2em] {
F_3&F_2\\
G_2^k&G_1^k\\};
\path[->] (m-1-1) edge  node[above]{$D_3$} (m-1-2);
\path[->] (m-1-1) edge  node[right]{$q_2^k$} (m-2-1);
\path[->] (m-2-1) edge  node[above]{$\delta_2^k$} (m-2-2);
\path[->] (m-1-2) edge  node[right]{$q_1^k$} (m-2-2);
\end{tikzpicture}
\]
the result follows from \cite[Theorem 2.6]{DGTrimmed}. First we check the commutativity left-hand diagram:
\begin{align*}
{D_0^k}'(f_i) &= T_{k,i},\\
\delta_1^k(q_1^k(f_i)) &= \delta_1^k\left(\sum_{l=1}^3 c_{i,k,l}u_l^k\right) = \sum_{l=1}^3c_{i,k,l}z_l = T_{k,i}.
\end{align*}
Now we check the commutativity of the right-hand diagram:

\begin{align*}
q_1^k(D_3(g)) &= q_1^k\left( \sum_{i=1}^m y_if_i \right) = \sum_{i=1}^m \sum_{l=1}^3 y_ic_{i,k,l}u_l^k=\left(\sum_{i=1}^my_ic_{i,k,1}\right)u_1^k+\left(\sum_{i=1}^my_ic_{i,k,2}\right)u_2^k+\left(\sum_{i=1}^my_ic_{i,k,3}\right)u_3^k,\end{align*}
\begin{samepage}
\begin{align*}
\delta_2^k(q_2^k(g)) &= \delta_2^k(d^k_{1,2}v^k_{1,2} + d^k_{1,3}v^k_{1,3} + d^k_{2,3}v^k_{2,3})\\
&= d^k_{1,2}(z_1u^k_2-z_2u^k_1) + d^k_{1,3}(z_1u^k_3-z_3u^k_1) + d^k_{2,3}(z_2u^k_3-z_3u^k_2)\\
&= (-z_2d^k_{1,2} - z_3d^k_{1,3})u^k_1 + (z_1d^k_{1,2} - z_3d^k_{2,3})u^k_2 + (z_1d^k_{1,3} + z_2d^k_{2,3})u^k_3.
\end{align*}
\end{samepage}

We check that $\sum_{i=1}^my_ic_{i,k,1}=-z_2d_{1,2}^k-z_3d_{1,3}^k$; the remaining two equalities are similarly checked.

It follows from \eqref{eq:singlepf} and the definition of $\sigma_{i,k,r}$ that for every $k\in\{1,\ldots,m\}\backslash\{i\}$ the following equality holds
\begin{equation}\label{eq:y_i}
y_i=\sum_{r=1}^m\sigma_{i,k,r}T_{k,r}\pf{i,k,r}{T}.
\end{equation}

Expanding $\sum_{i=1}^my_ic_{i,k,1}$ using \eqref{eq:y_i} and \eqref{eq:Tji} gives
\begin{align*}
\sum_{i=1}^m \sum_{r=1}^m \sigma_{i,k,r} c_{r,k,1}c_{i,k,1} \pf{i,k,r}{T} z_1 + \sum_{i=1}^m \sum_{r=1}^m \sigma_{i,k,r} c_{r,k,2}c_{i,k,1} \pf{i,k,r}{T} z_2 + \sum_{i=1}^m \sum_{r=1}^m \sigma_{i,k,r} c_{r,k,3}c_{i,k,1} \pf{i,k,r}{T} z_3.
\end{align*}

We notice that the coefficient of $z_1$ is 0 since, by \eqref{eq:Sig3Rel},
\[
\sigma_{i,k,r}c_{r,k,1}c_{i,k,1}\pf{i,k,r}{T}=-\sigma_{i,r,k}c_{i,k,1}c_{r,k,1}\pf{i,r,k}{T},
\]
and that the coefficients of $z_2$ and $z_3$ are $-d_{1,2}^k$ and $-d_{1,3}^k$ respectively.
\end{proof}

\section{Products on the resolution}
We start by defining several constants that will be used in the statement of the main theorem of this section. Let $\alpha,\beta\in\{1,2,3\}$ with $\alpha<\beta$, let $k$ be an integer between $1$ and $t$, and let $i,j$ be integers between $1$ and $m$. 

We first define the constants $d_{\alpha,\beta}^{k,i,j}$ which will be used in products \textit{a} and \textit{e} of the theorem.
\[
d^{k,i,j}_{\alpha,\beta} \colonequals \sum_{r=1}^{m} \sigma_{i,j,r} \sum_{h=1}^m \sigma_{i,j,r,h,k} \pf{i,j,r,h,k}{T}c_{r,k,\beta}c_{h,k,\alpha}.
\]

Let $l\in\{1,2,3\}$. The next set of constants will be used in product $b$.

\begin{align*}
    d_{\alpha,\beta}^{k,i,j,l}&\colonequals z_ld_{\alpha,\beta}^{k,i,j},\quad\mathrm{for}\;k\neq i,\\
    d^{i,i,j,l}_{\alpha,\beta}&\colonequals 0\quad\mathrm{if}\;\{\alpha,\beta,l\}=\{1,2,3\},\\
    d^{i,i,j,\alpha}_{\alpha,\beta}&\colonequals\sum_{r=1}^m\sigma_{i,j,r}\pf{i,j,r}{T}c_{r,i,\beta},\\
    d^{i,i,j,\beta}_{\alpha,\beta}&\colonequals-\sum_{r=1}^m\sigma_{i,j,r}\pf{i,j,r}{T}c_{r,i,\alpha}.
\end{align*}

We set $d_{\beta,\alpha}=-d_{\alpha,\beta}$ for any number of superscripts.

The DG algebra structure on $G_\bullet^k$ (with $1\leq k\leq t$) is the usual exterior product of the Koszul complex, namely
\[
u_1^ku_2^k=v_{1,2}^k,\quad u_1^ku_3^k=v_{1,3}^k,\quad u_2^ku_3^k=v_{2,3}^k,\quad u_1^ku_2^ku_3^k=w^k,
\]
and the usual skew-commutativity rules. A product in the DG algebra $G_\bullet^k$ will be denoted as $-\cdot_{G^k}-$. In the statement (and proof) of the theorem we will denote a product in the DG algebra $F_\bullet$ as $-\cdot_F-$. We will denote products on the resolution given in \Cref{thm:res} by an unadorned $-\cdot-$.

\begin{remark}
By \cite[Lemma 4.9]{DGTrimmed}, if the ideal we trim is contained in the square of the maximal ideal, then the only products on the resolution of the trimmed ideal, defined in \cite[Theorem 3.2]{DGTrimmed}, that may induce nontrivial products in homology, are the products $F_1'\cdot F_1'$ and $F_1'\cdot F_2$. In the next Theorem, we explicitly construct these products. Moreover, we also construct the products of the form $F_1'\cdot G_1^i$, since they are used in the computation of the products of the form $F_1'\cdot F_2$. The interested reader can find the complete DG algebra structure on this resolution in the Appendix. We label the products in the next Theorem so that they are consistent with the labels in \Cref{thm:Appendix}.
\end{remark}

\begin{theorem}\label{thm:prod}
A DG algebra structure on the resolution constructed in \Cref{thm:res} is given by the following product rules

\noindent \underline{a. $F_1'\otimes F_1' \rightarrow F_2 \oplus (\oplus_{k=1}^t G_2^k)$}\\
\begin{equation*}
e_i\cdot e_j\colonequals e_i\cdot_Fe_j+\sum_{k=1}^t d^{k,i,j}_{1,2}v^k_{1,2}+d^{k,i,j}_{1,3}v^k_{1,3} + d^{k,i,j}_{2,3}v^k_{2,3},
\end{equation*}
for $t+1\leq i,j\leq m$.

\vspace{4mm}
\noindent\underline{b. $F_1'\otimes G_1^i \rightarrow F_2 \oplus (\oplus_{k=1}^t G_2^k)$}\\
\begin{equation*}
e_j\cdot u_l^i\colonequals z_le_i\cdot_Fe_j +\sum_{k=1}^t d^{k,i,j,l}_{1,2}v^k_{1,2}+d^{k,i,j,l}_{1,3}v^k_{1,3} + d^{k,i,j,l}_{2,3}v^k_{2,3},
\end{equation*}
for $1\leq i\leq t$ and $t+1\leq j\leq m$.

\vspace{4mm}
\noindent \underline{e. $F_1' \otimes F_2 \rightarrow F_3 \oplus \left( \oplus_{k=1}^t G_3^k \right)$}\\
\begin{equation*}
e_i \cdot f_j\colonequals \begin{cases}
e_i\cdot_Ff_j, & t+1\leq j\leq m \\
- \sum_{r=1}^m c_{r,j,3} d^{j,i,r}_{1,2} w^j, & 1\leq j\leq t
\end{cases}
\end{equation*}
for $t+1\leq i \leq m$.
\end{theorem}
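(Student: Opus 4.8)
The plan is to exploit the fact that the resolution of \Cref{thm:res} has length $3$. Consequently, if a graded-commutative collection of products satisfies the Leibniz rule with respect to $\partial$, then it is automatically associative: for $x,y,z$ of homological degree $1$, the element $(x\cdot y)\cdot z-x\cdot(y\cdot z)$ lies in homological degree $3$, is annihilated by $\partial_3$ by the Leibniz rule, and hence vanishes because $\partial_3$ is injective; all other associativity constraints are trivial, as the relevant products land in homological degree $\geq 4$. Thus it suffices to verify that the products a, b, e (together with the remaining products of the full DG structure, given in \Cref{thm:Appendix}) are graded-commutative and satisfy the Leibniz rule; existence of a DG structure extending them is in any case guaranteed by \cite[Theorem 2.6]{DGTrimmed}. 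Throughout one uses $\partial_1(e_i)=y_i$ for $e_i\in F_1'$, $\partial_1(u_l^k)=-y_kz_l$, and the block forms of $\partial_2,\partial_3$ from \Cref{thm:res}.

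For product a ($t+1\leq i,j\leq m$), the Leibniz rule demands $\partial_2(e_i\cdot e_j)=y_ie_j-y_je_i$. The $F_1'$-component holds already inside $F_\bullet$, since $e_i\cdot_Fe_j=\sum_r\sigma_{i,j,r}\pf{i,j,r}{T}f_r$ by \Cref{ch:GorRes} and $D_2(e_i\cdot_Fe_j)=y_ie_j-y_je_i$ lies in $F_1'$. The $G_1^k$-component requires, for each $k$, that $\delta_2^k$ carry $\sum_{\alpha<\beta}d^{k,i,j}_{\alpha,\beta}v^k_{\alpha,\beta}$ to $q_1^k(e_i\cdot_Fe_j)=\sum_r\sigma_{i,j,r}\pf{i,j,r}{T}\sum_lc_{r,k,l}u_l^k$. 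This element is a $\delta_1^k$-cycle precisely by \eqref{eq:3pf0} (after invoking \eqref{eq:Tji}), hence a boundary; to exhibit the boundary explicitly one expands each $\pf{i,j,r}{T}$ via \eqref{eq:triplepf} to introduce $T_{k,h}=\sum_lc_{h,k,l}z_l$, regroups by the $u_l^k$, and compares the coefficient of $z_lu_l^k$ with $\pm d^{k,i,j}_{\alpha,\beta}$. The symmetry of $\pf{i,j,r,h,k}{T}$ in its indices, the relations \eqref{eq:Sig3Rel}, and the crossing-number formulas \eqref{eq:Sigma3Theta} and \eqref{eq:Sigma5Theta} are what make this work: after the relabelling $r\leftrightarrow h$ one verifies the sign identity $\sigma_{i,j,r}\sigma_{i,j,r,h,k}=-\sigma_{i,j,h}\sigma_{i,j,h,r,k}$, which forces the $z_lu_l^k$-coefficients to vanish and the remaining ones to assemble correctly. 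Graded-commutativity $e_i\cdot e_j=-e_j\cdot e_i$ follows from $\sigma_{i,j,r}=-\sigma_{j,i,r}$ together with the invariance of $\sigma_{i,j,r,h,k}$ and $\pf{i,j,r,h,k}{T}$ under permuting $i,j,r$.

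For product b, since $\partial_1(e_j)=y_j$ and $\partial_1(u_l^i)=-y_iz_l$, the Leibniz rule reads $\partial_2(e_j\cdot u_l^i)=y_ju_l^i+z_ly_ie_j$. For the summands with $k\neq i$ everything is $z_l$ times the corresponding computation in product a, which both forces $d^{k,i,j,l}_{\alpha,\beta}=z_ld^{k,i,j}_{\alpha,\beta}$ and produces the $F_1'$-component $z_lD_2'(e_i\cdot_Fe_j)=z_ly_ie_j$. The summand with $k=i$ must in addition account for $y_ju_l^i$; writing $y_j=\sum_r\sigma_{j,i,r}T_{i,r}\pf{j,i,r}{T}$ by \eqref{eq:y_i} and expanding $T_{i,r}$ via \eqref{eq:Tji} shows that the required correction is exactly $\delta_2^i$ of the element whose coefficients are the specially-defined $d^{i,i,j,l}_{\alpha,\beta}$, the vanishing prescription when $\{\alpha,\beta,l\}=\{1,2,3\}$ reflecting that no $z_l$ is produced in that slot. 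For product e ($t+1\leq i\leq m$) the Leibniz rule is $\partial_3(e_i\cdot f_j)=y_if_j-e_i\cdot\partial_2(f_j)$, with $\partial_2(f_j)=\big(D_2'(f_j),-\sum_kq_1^k(f_j)\big)$, so the right-hand side is expanded using product a on $e_i\cdot D_2'(f_j)$ and product b on the terms $e_i\cdot u_l^k$. The $F_3$-component collapses to $e_i\cdot_Ff_j$ by the Leibniz rule already valid in $F_\bullet$ (after using $\sum_lc_{j,k,l}z_l=T_{k,j}$ to reassemble $e_i\cdot_FD_2(f_j)$), which gives the first case; when $1\leq j\leq t$ this component vanishes, and collecting the $G_3^k$-contributions of products a and b and simplifying with \eqref{eq:5pf0} and the pfaffian identities of \cite{Knuth} leaves $-\sum_rc_{r,j,3}d^{j,i,r}_{1,2}$ as the coefficient of $w^j$ and $0$ on every $G_3^k$ with $k\neq j$.

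The main obstacle is the sign bookkeeping. Nearly every step rests on an identity among the symbols $\sigma_{\cdot,\cdot,\cdot}$ and $\sigma_{\cdot,\cdot,\cdot,\cdot,\cdot}$, and these are most reliably established by rewriting both sides through the Heaviside-function expressions \eqref{eq:Sigma3Theta} and \eqref{eq:Sigma5Theta} and comparing exponents modulo $2$; combined with the double sums over $r$ and $h$ and the reindexings used to exploit the symmetry of the five-index pfaffian, the truly delicate point is product e, where one must see that \eqref{eq:5pf0} together with Knuth's identities conspire to annihilate the spurious $G_3^k$-terms and collapse the surviving pfaffian sums into the single stated coefficient.
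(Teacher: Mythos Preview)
Your proposal is correct and follows essentially the same route as the paper. The paper invokes \cite[Theorem~3.2]{DGTrimmed} at each step to identify the equations that must be verified, whereas you derive those same equations directly from the Leibniz rule together with the observation that injectivity of $\partial_3$ forces associativity; the component-by-component computations (expanding $\pf{i,j,r}{T}$ via \eqref{eq:triplepf}, invoking the sign identity $\sigma_{i,j,r}\sigma_{i,j,r,h,k}=-\sigma_{i,j,h}\sigma_{i,j,h,r,k}$, and collapsing the $G_3^k$-terms in product~e via \eqref{eq:5pf0}) are identical to the paper's.
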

Before proving the theorem we provide an example
\begin{example}\label{ex:DGARes}
In this example we compute the products $a,b,e$ on the resolution constructed in \Cref{ex:FreeRes}. For product $a$, we need the constants $d_{\alpha,\beta}^{k,i,j}$
\begin{align*}
d_{1,2}^{1,i,j}&=\sum_{r=1}^5\sum_{h=1}^5\pf{i,j,r,h,1}{T}c_{r,1,2}c_{h,1,1}=0,\\
d_{1,3}^{1,i,j}&=\sum_{r=1}^5\sum_{h=1}^5\pf{i,j,r,h,1}{T}c_{r,1,3}c_{h,1,1}=\pf{i,j,5,4,1}{T},\\
d_{2,3}^{1,i,j}&=\sum_{r=1}^5\sum_{h=1}^5\pf{i,j,r,h,1}{T}c_{r,1,3}c_{h,1,2}=0.
\end{align*}
So, the products of type $a$ are
\begin{center}
\def\arraystretch{1.25}
\begin{tabular}{|c||c|c|c|c|c|}
\hline
& $e_2$ & $e_3$ & $e_4$ & $e_5$\\
\hline\hline
$e_2$ & $0$ & $zf_4+xf_5+v_{1,3}^1$ & $zf_3$ & $yf_1+xf_3$\\
\hline
$e_3$ & $-$ & $0$ & $yf_1+zf_2$ & $zf_1+xf_2$\\
\hline
$e_4$ & $-$ & $-$ & $0$ & $xf_1$\\
\hline
$e_5$ & $-$ & $-$ & $-$ & $0$\\
\hline
\end{tabular}
\end{center}
where the $-$ indicates products that can be deduced from the skew-commutativity of the product. For product $b$, we need the constants $d_{\alpha,\beta}^{k,i,j,l}$
\begin{align*}
d_{1,2}^{1,1,j,3}&=d_{1,3}^{1,1,j,2}=d_{2,3}^{1,1,j,1}=0,\\
d_{1,2}^{1,1,j,1}&=d_{2,3}^{1,1,j,3}=\sum_{r=1}^5\pf{1,j,r}{T}c_{r,1,2}=0,\\
d_{1,2}^{1,1,j,2}&=d_{1,3}^{1,1,j,3}=\sum_{r=1}^5\pf{1,j,r}{T}c_{r,1,1}=\pf{1,j,4}{T},\\
d_{1,3}^{1,1,j,1}&=d_{2,3}^{1,1,j,2}=\sum_{r=1}^5\pf{1,j,r}{T}c_{r,1,3}=\pf{1,j,5}{T}.
\end{align*}
So, the products of type $b$ are
\begin{center}
\def\arraystretch{1.25}
\begin{tabular}{|c||c|c|c|c|c|}
\hline
& $u_1^1$ & $u_2^1$ & $u_3^1$ \\
\hline\hline
$e_2$ & $xyf_5+yv_{1,3}^1$ & $y^2f_5+yv_{2,3}^1$ & $yzf_5$ \\
\hline
$e_3$ & $xyf_4+xzf_5+zv_{1,3}^1$ & $y^2f_4+yzf_5+yv_{1,2}^1+zv_{2,3}^1$ & $yzf_4+z^2f_5+yv_{1,3}^1$ \\
\hline
$e_4$ & $xyf_3+x^2f_5+xv_{1,3}^1$ & $y^2f_3+xyf_5+xv_{2,3}^1$ & $yzf_3+xzf_5$ \\
\hline
$e_5$ & $xyf_2+xzf_3+x^2f_4$ & $y^2f_2+yzf_3+xyf_4+xv_{1,2}^1$ & $yzf_2+z^2f_3+xzf_4+xv_{1,3}^1$ \\
\hline
\end{tabular}
\end{center}
While the products of type $e$ are
\begin{center}
\def\arraystretch{1.25}
\begin{tabular}{|c||c|c|c|c|c|}
\hline
& $f_1$ & $f_2$ & $f_3$ & $f_4$ & $f_5$\\
\hline\hline
$e_2$ & $0$ & $g$ & $0$ & $0$ & $0$\\
\hline
$e_3$ & $0$ & $0$ & $g$ & $0$ & $0$\\
\hline
$e_4$ & $0$ & $0$ & $0$ & $g$ & $0$\\
\hline
$e_5$ & $0$ & $0$ & $0$ & $0$ & $g$\\
\hline
\end{tabular}
\end{center}
\end{example}

\begin{proof}[Proof of \Cref{thm:prod}]\hphantom{a}\\\vspace{-0.1in}

\noindent \underline{$a$. $F_1'\otimes F_1' \rightarrow F_2 \oplus (\oplus_{k=1}^t G_2^k)$}\\
By \cite[Theorem 3.2]{DGTrimmed}, it suffices to show that
\[
\delta_2^k(d^{k,i,j}_{1,2}v^k_{1,2}+d^{k,i,j}_{1,3}v^k_{1,3} + d^{k,i,j}_{2,3}v^k_{2,3})=q_1^k(e_i\cdot_Fe_j).
\]
Indeed, the left side is equal to
\[
(-d^{k,i,j}_{1,2}z_2 - d^{k,i,j}_{1,3}z_3)u^k_1 + (d^{k,i,j}_{1,2}z_1 -d^{k,i,j}_{2,3}z_3)u^k_2 + (d^{k,i,j}_{1,3}z_1+d^{k,i,j}_{2,3}z_2)u^k_3,
\]
while the right side is equal to
\begin{align*}
    q_1^k(e_i\cdot_Fe_j) 
    &= q_1^k\left(\sum_{r=1}^m \sigma_{i,j,r} \pf{i,j,r}{T} f_r\right)\\
    &= \sum_{r=1}^{m} \sigma_{i,j,r} \pf{i,j,r}{T} (c_{r,k,1}u^k_1 + c_{r,k,2}u^k_2 + c_{r,k,3}u^k_3)\\
    &= \sum_{r=1}^{m} \sigma_{i,j,r} \sum_{h=1}^m \sigma_{i,j,r,h,k} T_{k,h} \pf{i,j,r,h,k}{T} (c_{r,k,1}u^k_1 + c_{r,k,2}u^k_2 + c_{r,k,3}u^k_3)\\
    &= \sum_{r=1}^{m} \sigma_{i,j,r} \sum_{h=1}^m \sigma_{i,j,r,h,k} (c_{h,k,1}z_1+c_{h,k,2}z_3+c_{h,k,1}z_3) \pf{i,j,r,h,k}{T} (c_{r,k,1}u^k_1 + c_{r,k,2}u^k_2 + c_{r,k,3}u^k_3),
\end{align*}
where the first equality comes from the product in $F_\bullet$, the second equality comes from applying $q_1^k$ as defined in \Cref{thm:res}, the third equality comes from \eqref{eq:triplepf}, and the fourth equality comes from \eqref{eq:Tji}. We show that the coefficients of $u_1^k$ are the same; a similar argument can be used for the coefficients of $u_2^k$ and $u_3^k$. We need to show that the following equality holds
\begin{align*}
    -d^{k,i,j}_{1,2}z_2 - d^{k,i,j}_{1,3}z_3 &= \sum_{r=1}^{m} \sigma_{i,j,r} \sum_{h=1}^m \sigma_{i,j,r,h,k} \pf{i,j,r,h,k}{T}c_{r,k,1}c_{h,k,1}z_1\\
    &+ \sum_{r=1}^{m} \sigma_{i,j,r} \sum_{h=1}^m \sigma_{i,j,r,h,k} \pf{i,j,r,h,k}{T}c_{r,k,1}c_{h,k,2}z_2\\ &+ \sum_{r=1}^{m} \sigma_{i,j,r} \sum_{h=1}^m \sigma_{i,j,r,h,k} \pf{i,j,r,h,k}{T}c_{r,k,1}c_{h,k,3}z_3.
\end{align*}
\noindent
The coefficient of $z_1$ on the right hand side of the display above is zero since
\[
\sigma_{i,j,r}\sigma_{i,j,r,h,k}=-\sigma_{i,j,h}\sigma_{i,j,h,r,k},
\]
which can be verified using \eqref{eq:Sigma3Theta} and \eqref{eq:Sigma5Theta}. The coefficients of $z_2$ and $z_3$ match for the same reason.

\vspace{4mm}
\noindent\underline{$b$. $F_1'\otimes G_1^i \rightarrow F_2 \oplus (\oplus_{k=1}^t G_2^k)$}\\
By \cite[Theorem 3.2]{DGTrimmed}, it suffices to show that
\begin{align}
\delta_2^i(d^{i,i,j,l}_{1,2}v^i_{1,2}+d^{i,i,j,l}_{1,3}v^i_{1,3} + d^{i,i,j,l}_{2,3}v^i_{2,3}) &= y_ju_l^i+z_lq_1^i(e_i\cdot_Fe_j),\label{eq:bi}\\
\delta_2^k(d^{k,i,j,l}_{1,2}v^k_{1,2}+d^{k,i,j,l}_{1,3}v^k_{1,3} + d^{k,i,j,l}_{2,3}v^k_{2,3})&=z_lq_1^k(e_i\cdot_Fe_j),\quad\mathrm{for}\;k\neq i.\label{eq:bk}
\end{align}
We first prove \eqref{eq:bi}. The left side is equal to
\[
(-d^{i,i,j,l}_{1,2}z_2 - d^{i,i,j,l}_{1,3}z_3)u^i_1 + (d^{i,i,j,l}_{1,2}z_1 -d^{i,i,j,l}_{2,3}z_3)u^i_2 + (d^{i,i,j,l}_{1,3}z_1+d^{i,i,j,l}_{2,3}z_2)u^i_3,
\]
while the right side is equal to
\begin{align*}
    &y_ju_l^i + z_lq_1^i(e_i\cdot_Fe_j) \\
    = &y_ju_l^i + z_lq_1^i\left( \sum_{r=1}^{m} \sigma_{i,j,r} \pf{i,j,r}{T} f_r \right)\\
    = & y_ju_l^i + z_l \sum_{r=1}^{m} \sigma_{i,j,r} \pf{i,j,r}{T} (c_{r,i,1}u_1^i + c_{r,i,2}u_2^i + c_{r,i,3}u_3^i), 
\end{align*}
where the first equality comes from the product in $F_\bullet$ and the second equality comes from applying $q_1^k$ as defined in \Cref{thm:res}. To prove that the coefficients of $u_1^i,u_2^i$ and $u_3^i$ match, we assume that $l=1$; the remaining cases are proved similarly. The coefficients of $u_2^i$ and $u_3^i$ match by the definition of the constants $d^{i,i,j,1}_{1,2}$, $d^{i,i,j,1}_{1,3}$ and $d^{i,i,j,1}_{2,3}$. Expanding $y_j$ using \eqref{eq:singlepf} and \eqref{eq:Sig3Rel}, it follows that the coefficient of $u_1^i$ in $y_ju_1^i+z_1q_1^i(e_1\cdot_Fe_j)$ is
\begin{align*}
     &-z_1\sum_{r=1}^m \sigma_{i,j,r} \pf{i,j,r}{T} c_{r,i,1} -z_2\sum_{r=1}^m \sigma_{i,j,r} \pf{i,j,r}{T} c_{r,i,2}\\ 
     &-z_3\sum_{r=1}^m \sigma_{i,j,r} \pf{i,j,r}{T} c_{r,i,3} 
     +z_1\sum_{r=1}^{m} \sigma_{i,j,r} \pf{i,j,r}{T} c_{r,i,1}\\
     = &-z_2\sum_{r=1}^m \sigma_{i,j,r} \pf{i,j,r}{T} c_{r,i,2} -z_3\sum_{r=1}^m \sigma_{i,j,r} \pf{i,j,r}{T} c_{r,i,3}\\
     = &-d_{1,2}^{i,i,j,1}z_2 -d_{1,3}^{i,i,j,1}z_3.
\end{align*}
Now we prove \eqref{eq:bk}. By the definitions of $d_{1,2}^{k,i,j,l}$, $d_{1,3}^{k,i,j,l}$ and $d_{2,3}^{k,i,j,l}$ when $k\neq i$, the left hand side is equal to
\begin{align*}
\delta_2^k(z_ld^{k,i,j}_{1,2}v^k_{1,2}+z_ld^{k,i,j}_{1,3}v^k_{1,3} + z_ld^{k,i,j}_{2,3}v^k_{2,3}) = z_l\delta_2^k(d^{k,i,j}_{1,2}v^k_{1,2}+d^{k,i,j}_{1,3}v^k_{1,3} + d^{k,i,j}_{2,3}v^k_{2,3}) = z_lq_1^k(e_i\cdot_Fe_j),
\end{align*}
where the last equality was shown in the proof of product \textit{a}.

\vspace{4mm}
\noindent \underline{$e$. $F_1' \otimes F_2 \rightarrow F_3 \oplus \left( \oplus_{k=1}^t G_3^k \right)$}\\
Set
\begin{align*}
    g_2^k &\colonequals \left(\sum_{r=t+1}^{m} T_{r,j} d_{1,2}^{k,i,r}\right) v^k_{1,2} + \left(\sum_{r=t+1}^{m} T_{r,j} d_{1,3}^{k,i,r}\right) v^k_{1,3} + \left(\sum_{r=t+1}^{m} T_{r,j} d_{2,3}^{k,i,r}\right) v^k_{2,3},\quad\mathrm{with}\;1\leq k\leq t,\\
    g_2^{k,r} &\colonequals \left(\sum_{l=1}^3 c_{j,k,l} d_{1,2}^{r,k,i,l}\right)v^r_{1,2} + \left(\sum_{l=1}^3 c_{j,k,l} d_{1,3}^{r,k,i,l}\right) v^r_{1,3} + \left(\sum_{l=1}^3 c_{j,k,l}d_{2,3}^{r,k,i,l}\right) v^r_{2,3},\quad\mathrm{with}\;1\leq k,r\leq t.
\end{align*}
By \cite[Theorem 3.2]{DGTrimmed}, it suffices to show that
\begin{align}
e_i\cdot \left( d_2'(f_j) - \sum_{k=1}^t q_1^k(f_j)\right) &= e_i\cdot_F d_2'(f_j) + \sum_{k=1}^t g_2^k - \sum_{k=1}^t \left( \delta_1^k\circ q_1^k(f_j) e_k\cdot_F e_i + \sum_{r=1}^t g_2^{k,r} \right),\label{eq:e1}\\
q_2^k(e_i\cdot_F f_j) + g_2^k - \sum_{r=1}^t g_2^{r,k} &= 0,\;\mathrm{with}\;k\neq j,\label{eq:e2}\\
q_2^j(e_i\cdot_F f_j) + g_2^j - \sum_{r=1}^t g_2^{r,j} &= \delta_3^j\left(\sum_{r=1}^m c_{r,j,3} d^{j,i,r}_{1,2} w^j\right).\label{eq:e3}
\end{align}
We first prove \eqref{eq:e1}. The left hand side is equal to 
\begin{align*}
&e_i\cdot\sum_{r=t+1}^m T_{r,j} e_r - e_i\cdot\sum_{k=1}^t \sum_{l=1}^3 c_{j,k,l}u_l^k\\
= &\sum_{r=t+1}^m T_{r,j} e_i\cdot_Fe_r + \sum_{r=t+1}^{m} T_{r,j} \sum_{k=1}^t d_{1,2}^{k,i,r}v^k_{1,2} + d_{1,3}^{k,i,r}v^k_{1,3} + d_{2,3}^{k,i,r}v^k_{2,3}\\
&- \sum_{k=1}^t \sum_{l=1}^3 c_{j,k,l} z_l e_k\cdot_Fe_i - \sum_{k=1}^t \sum_{r=1}^t \sum_{l=1}^3 c_{j,k,l} d_{1,2}^{r,k,i,l}v^r_{1,2} + d_{1,3}^{r,k,i,l}v^r_{1,3} + d_{2,3}^{r,k,i,l}v^r_{2,3},
\end{align*}
while the right hand side is equal to 
\begin{align*}
&e_i\cdot_F\sum_{r=t+1}^m T_{r,j} e_r + \sum_{k=1}^t \sum_{r=t+1}^{m} T_{r,j} \left(d_{1,2}^{k,i,r}v^k_{1,2} + d_{1,3}^{k,i,r}v^k_{1,3} + d_{2,3}^{k,i,r}v^k_{2,3}\right)\\
&- \sum_{k=1}^t  \delta_1^k\circ q_1^k(f_j) e_k\cdot_F e_i - \sum_{k=1}^t \sum_{r=1}^t \left(\sum_{l=1}^3 c_{j,k,l} d_{1,2}^{r,k,i,l}v^r_{1,2} + d_{1,3}^{r,k,i,l}v^r_{1,3} + d_{2,3}^{r,k,i,l}v^r_{2,3}\right),
\end{align*}
which coincides with the left hand side. Now we prove \eqref{eq:e2}. We show that the coefficient of $v_{1,2}^k$ on the left hand side of \eqref{eq:e2} is zero; the remaining cases are similarly checked. The coefficient of $v_{1,2}^k$ is
\begin{align*}
    &\delta_{i,j}\sum_{r=1}^m\sum_{h=1}^m \sigma_{r,k,h} c_{r,k,2} c_{h,k,1} \pf{r,k,h}{T} + \sum_{r=t+1}^{m} T_{r,j} d_{1,2}^{k,i,r} - \sum_{r=1}^t \sum_{l=1}^3 c_{j,r,l} d_{1,2}^{k,r,i,l}\\
    = &\delta_{i,j}\sum_{r=1}^m\sum_{h=1}^m \sigma_{r,k,h} c_{r,k,2} c_{h,k,1} \pf{r,k,h}{T} + \sum_{r=t+1}^{m} T_{r,j} d_{1,2}^{k,i,r} - \sum_{\underset{r\neq k}{r=1}}^t \sum_{l=1}^3 c_{j,r,l} d_{1,2}^{k,r,i,l} - \sum_{l=1}^3 c_{j,k,l} d_{1,2}^{k,k,i,l}\\
    = &\delta_{i,j}\sum_{r=1}^m\sum_{h=1}^m \sigma_{r,k,h} c_{r,k,2} c_{h,k,1} \pf{r,k,h}{T} + \sum_{r=t+1}^{m} T_{r,j} d_{1,2}^{k,i,r} - \sum_{\underset{r\neq k}{r=1}}^t \sum_{l=1}^3 z_lc_{j,r,l} d_{1,2}^{k,r,i} - \sum_{l=1}^3 c_{j,k,l} d_{1,2}^{k,k,i,l}\\
    = &\delta_{i,j}\sum_{r=1}^m\sum_{h=1}^m \sigma_{r,k,h} c_{r,k,2} c_{h,k,1} \pf{r,k,h}{T} + \sum_{r=t+1}^{m} T_{r,j} d_{1,2}^{k,i,r} - \sum_{\underset{r\neq k}{r=1}}^t T_{r,j} d_{1,2}^{k,r,i} - \sum_{l=1}^3 c_{j,k,l} d_{1,2}^{k,k,i,l}\\
    = &\delta_{i,j}\sum_{r=1}^m\sum_{h=1}^m \sigma_{r,k,h} c_{r,k,2} c_{h,k,1} \pf{r,k,h}{T} + \sum_{r=t+1}^{m} T_{r,j} d_{1,2}^{k,i,r} + \sum_{\underset{r\neq k}{r=1}}^t T_{r,j} d_{1,2}^{k,i,r} - \sum_{l=1}^3 c_{j,k,l} d_{1,2}^{k,k,i,l}\\
    = &\delta_{i,j}\sum_{r=1}^m\sum_{h=1}^m \sigma_{r,k,h} c_{r,k,2} c_{h,k,1} \pf{r,k,h}{T} + \sum_{\underset{r\neq k}{r=1}}^m T_{r,j} d_{1,2}^{k,i,r} - \sum_{l=1}^3 c_{j,k,l} d_{1,2}^{k,k,i,l}\\
    = &\delta_{i,j}\sum_{r=1}^m\sum_{h=1}^m \sigma_{r,k,h} c_{r,k,2} c_{h,k,1} \pf{r,k,h}{T} + \sum_{\underset{r\neq k}{r=1}}^m T_{r,j} d_{1,2}^{k,i,r} - c_{j,k,1} d_{1,2}^{k,k,i,1} - c_{j,k,2} d_{1,2}^{k,k,i,2}\\
    = &\delta_{i,j}\sum_{r=1}^m\sum_{h=1}^m \sigma_{r,k,h} c_{r,k,2} c_{h,k,1} \pf{r,k,h}{T} + \sum_{\underset{r\neq k}{r=1}}^m T_{r,j} d_{1,2}^{k,i,r}\\
    &- \sum_{r=1}^m \sigma_{k,i,r} \pf{k,i,r}{T} c_{j,k,1}c_{r,k,2} + \sum_{r=1}^m \sigma_{k,i,r} \pf{k,i,r}{T} c_{r,k,1}c_{j,k,2}\\
    = &\delta_{i,j}\sum_{r=1}^m\sum_{h=1}^m \sigma_{r,k,h} c_{r,k,2} c_{h,k,1} \pf{r,k,h}{T} + \sum_{\underset{r\neq k}{r=1}}^m T_{r,j} d_{1,2}^{k,i,r} + \sum_{r=1}^m \sigma_{k,i,r} \pf{k,i,r}{T} (c_{r,k,1}c_{j,k,2} - c_{j,k,1}c_{r,k,2})
\end{align*}
where the first equality follows from separating the case $r=k$, the second equality follows from the definition of $d_{1,2}^{k,r,i,l}$, the third equality follows from \eqref{eq:Tji}, the fourth equality follows from definition of $d_{1,2}^{k,i,r}$, the fifth equality follows from combining summations, the sixth and seventh equalities follow from definition of $d_{1,2}^{k,k,i,l}$, and the eighth equality follows from combining summations. First, we consider the case $i\neq j$, in which the coefficient of $v_{1,2}^k$ becomes
\begin{align*}
    &\sum_{\underset{r\neq k}{r=1}}^m T_{r,j} d_{1,2}^{k,i,r} + \sum_{r=1}^m \sigma_{k,i,r} \pf{k,i,r}{T} (c_{r,k,1}c_{j,k,2} - c_{j,k,1}c_{r,k,2})\\
    = &\sum_{\underset{r\neq k}{r=1}}^m T_{r,j} \sum_{h=1}^m \sigma_{i,r,h} \sum_{s=1}^m \sigma_{i,r,h,s,k} \pf{i,r,h,s,k}{T} c_{s,k,1} c_{h,k,2} + \sum_{h=1}^m\sigma_{k,i,h}\pf{k,i,h}{T} (c_{h,k,1}c_{j,k,2} - c_{j,k,1}c_{h,k,2})\\
    = &\sum_{\underset{r\neq k}{r=1}}^m T_{r,j} \sum_{\underset{h\neq j}{h=1}}^m \sigma_{i,r,h} \sum_{s=1}^m \sigma_{i,r,h,s,k} \pf{i,r,h,s,k}{T} c_{s,k,1} c_{h,k,2} + \sum_{\underset{r\neq k}{r=1}}^m T_{r,j} \sigma_{i,r,j} \sum_{h=1}^m \sigma_{i,r,j,h,k} \pf{i,r,j,h,k}{T} c_{h,k,1} c_{j,k,2}\\ 
    &+ \sum_{\underset{h\neq j}{h=1}}^m \sigma_{k,i,h}\sum_{\underset{r\neq k}{r=1}}^m T_{j,r} \sigma_{k,i,h,r,j} \pf{k,i,h,r,j}{T} (c_{h,k,1}c_{j,k,2} - c_{j,k,1}c_{h,k,2})\\
    = &\sum_{\underset{r\neq k}{r=1}}^m T_{r,j} \sum_{\underset{h\neq j}{h=1}}^m \sigma_{i,r,h} \sum_{s=1}^m \sigma_{i,r,h,s,k} \pf{i,r,h,s,k}{T} c_{s,k,1} c_{h,k,2} + \sum_{\underset{r\neq k}{r=1}}^m T_{r,j} \sigma_{i,r,j} \sum_{\underset{h\neq j}{h=1}}^m \sigma_{i,r,j,h,k} \pf{i,r,j,h,k}{T} c_{h,k,1} c_{j,k,2}\\ 
    &- \sum_{\underset{h\neq j}{h=1}}^m \sigma_{k,i,h}\sum_{\underset{r\neq k}{r=1}}^m T_{r,j} \sigma_{k,i,h,r,j} \pf{k,i,h,r,j}{T} (c_{h,k,1}c_{j,k,2} - c_{j,k,1}c_{h,k,2})\\
    = &\sum_{\underset{r\neq k}{r=1}}^m T_{r,j} \sum_{\underset{h\neq j}{h=1}}^m \sigma_{i,r,h} \sum_{s=1}^m \sigma_{i,r,h,s,k} \pf{i,r,h,s,k}{T} c_{s,k,1} c_{h,k,2} + \sum_{\underset{r\neq k}{r=1}}^m T_{r,j} \sum_{\underset{h\neq j}{h=1}}^m \sigma_{k,i,h}\sigma_{k,i,h,r,j} \pf{k,i,h,r,j}{T}c_{j,k,1}c_{h,k,2}\\
    = &\sum_{\underset{r\neq k}{r=1}}^m T_{r,j} \sum_{\underset{h\neq j}{h=1}}^m \sigma_{i,r,h} \sum_{\underset{s\neq j}{s=1}}^m \sigma_{i,r,h,s,k} \pf{i,r,h,s,k}{T} c_{s,k,1} c_{h,k,2} + \sum_{\underset{r\neq k}{r=1}}^m T_{r,j} \sum_{\underset{h\neq j}{h=1}}^m \sigma_{i,r,h} \sigma_{i,r,h,j,k} \pf{i,r,h,j,k}{T} c_{j,k,1} c_{h,k,2}\\ 
    &+ \sum_{\underset{r\neq k}{r=1}}^m T_{r,j} \sum_{\underset{h\neq j}{h=1}}^m \sigma_{k,i,h}\sigma_{k,i,h,r,j} \pf{k,i,h,r,j}{T}c_{j,k,1}c_{h,k,2}\\
    = &\sum_{\underset{r\neq k}{r=1}}^m T_{r,j} \sum_{\underset{h\neq j}{h=1}}^m \sigma_{i,r,h} \sum_{\underset{s\neq j}{s=1}}^m \sigma_{i,r,h,s,k} \pf{i,r,h,s,k}{T} c_{s,k,1} c_{h,k,2}\\
    = &\sum_{\underset{s\neq j}{s=1}}^m c_{s,k,1}\sum_{\underset{h\neq j}{h=1}}^m c_{h,k,2} \sum_{\underset{r\neq k}{r=1}}^m \sigma_{i,r,h} \sigma_{i,r,h,s,k} T_{r,j} \pf{i,r,h,s,k}{T}\\
    = &0,
\end{align*}
where the first equality follows from the definition of $d_{1,2}^{k,i,r}$ and reindexing, the second equality follows from separating the case $h=j$, reindexing and using \eqref{eq:triplepf}, the third equality changes $T_{j,r}$ in the third term with $-T_{r,j}$, the fourth equality follows from cancellation via \eqref{eq:Sigma3Theta} and \eqref{eq:Sigma5Theta}, the fifth equality follows from separating the case $s=j$, the sixth equality follows from cancellation via \eqref{eq:Sigma3Theta} and \eqref{eq:Sigma5Theta}, the seventh equality follows from rearranging the summation, and the eighth equality follows from \eqref{eq:5pf0}.

We move to consider the case $i=j$, which forces $k\neq j$, and the coefficient of $v_{1,2}^k$ is
\begin{align*}
&\sum_{r=1}^m\sum_{h=1}^m \sigma_{r,k,h} c_{r,k,2} c_{h,k,1} \pf{r,k,h}{T} + \sum_{\underset{r\neq k}{r=1}}^m T_{r,i} d_{1,2}^{k,i,r} + \sum_{r=1}^m \sigma_{k,i,r} \pf{k,i,r}{T} (c_{r,k,1}c_{i,k,2} - c_{i,k,1}c_{r,k,2})\\
= &\sum_{r=1}^m\sum_{h=1}^m \sigma_{r,k,h} c_{r,k,2} c_{h,k,1} \pf{r,k,h}{T} + \sum_{r=1}^m T_{r,i} \sum_{s=1}^m \sigma_{i,r,s} \sum_{h=1}^m \sigma_{i,r,s,h,k} \pf{i,r,s,h,k}{T} c_{s,k,2}c_{h,k,1}\\
&+ \sum_{r=1}^m \sigma_{k,i,r} \pf{k,i,r}{T} (c_{r,k,1}c_{i,k,2} - c_{i,k,1}c_{r,k,2})\\
= &\sum_{\underset{r\neq i}{r=1}}^m\sum_{h=1}^m \sigma_{r,k,h} c_{r,k,2} c_{h,k,1} \pf{r,k,h}{T} + \sum_{h=1}^m \sigma_{i,k,h} c_{i,k,2} c_{h,k,1} \pf{i,k,h}{T}\\ 
&+ \sum_{r=1}^m T_{r,i} \sum_{s=1}^m \sigma_{i,r,s} \sum_{h=1}^m \sigma_{i,r,s,h,k} \pf{i,r,s,h,k}{T} c_{s,k,2}c_{h,k,1} + \sum_{h=1}^m \sigma_{k,i,h} \pf{k,i,h}{T} (c_{h,k,1}c_{i,k,2} - c_{i,k,1}c_{h,k,2})\\
= &\sum_{\underset{r\neq i}{r=1}}^m\sum_{h=1}^m \sigma_{r,k,h} c_{r,k,2} c_{h,k,1} \pf{r,k,h}{T}\\
&+ \sum_{r=1}^m T_{r,i} \sum_{s=1}^m \sigma_{i,r,s} \sum_{h=1}^m \sigma_{i,r,s,h,k} \pf{i,r,s,h,k}{T} c_{s,k,2}c_{h,k,1} - \sum_{h=1}^m\sigma_{k,i,h}\pf{k,i,h}{T}c_{h,k,2}c_{i,k,1}\\
= &\sum_{\underset{r\neq i}{r=1}}^m\sum_{\underset{h\neq i}{h=1}}^m \sigma_{r,k,h} c_{r,k,2} c_{h,k,1} \pf{r,k,h}{T} + \sum_{r=1}^m \sigma_{r,k,i} c_{r,k,2} c_{i,k,1} \pf{r,k,i}{T}\\
&+ \sum_{r=1}^m T_{r,i} \sum_{s=1}^m \sigma_{i,r,s} \sum_{h=1}^m \sigma_{i,r,s,h,k} \pf{i,r,s,h,k}{T} c_{s,k,2}c_{h,k,1} -  \sum_{r=1}^m\sigma_{k,i,r}\pf{k,i,r}{T}c_{r,k,2}c_{i,k,1}\\
= &\sum_{\underset{r\neq i}{r=1}}^m\sum_{\underset{h\neq i}{h=1}}^m \sigma_{r,k,h} c_{r,k,2} c_{h,k,1} \pf{r,k,h}{T} + \sum_{r=1}^m T_{r,i} \sum_{s=1}^m \sigma_{i,r,s} \sum_{h=1}^m \sigma_{i,r,s,h,k} \pf{i,r,s,h,k}{T} c_{s,k,2}c_{h,k,1}\\
= &\sum_{\underset{r\neq i}{r=1}}^m\sum_{\underset{h\neq i}{h=1}}^m \sigma_{r,k,h} c_{r,k,2} c_{h,k,1} \sum_{\underset{s\neq i}{s=1}}^m \sigma_{r,k,h,s,i}T_{i,s}\pf{r,k,h,s,i}{T} + \sum_{r=1}^m T_{r,i} \sum_{s=1}^m \sigma_{i,r,s} \sum_{h=1}^m \sigma_{i,r,s,h,k} \pf{i,r,s,h,k}{T} c_{s,k,2}c_{h,k,1}\\
= & \sum_{\underset{s\neq i}{s=1}}^m\sum_{\underset{h\neq i}{h=1}}^m \sigma_{s,k,h} c_{s,k,2} c_{h,k,1} \sum_{\underset{r\neq i}{r=1}}^m \sigma_{s,k,h,r,i}T_{i,r}\pf{s,k,h,r,i}{T}
+ \sum_{r=1}^m T_{r,i} \sum_{s=1}^m \sigma_{i,r,s} \sum_{h=1}^m \sigma_{i,r,s,h,k} \pf{i,r,s,h,k}{T} c_{s,k,2}c_{h,k,1}\\
= & 
\sum_{\underset{r\neq i}{r=1}}^m T_{i,r}
\sum_{\underset{s\neq i}{s=1}}^m 
\sum_{\underset{h\neq i}{h=1}}^m
\sigma_{s,k,h} \sigma_{s,k,h,r,i}\pf{s,k,h,r,i}{T} c_{s,k,2} c_{h,k,1}
- \sum_{r=1}^m T_{i,r} \sum_{s=1}^m \sum_{h=1}^m \sigma_{i,r,s} \sigma_{i,r,s,h,k} \pf{i,r,s,h,k}{T} c_{s,k,2}c_{h,k,1}\\
= &0
\end{align*}
\vspace{-0.02in}\noindent
where the first equality follows from the definition of $d_{1,2}^{k,i,r}$ , the second equality follows from separating the case $r=i$, the third equality follows from cancellation via \eqref{eq:Sig3Rel}, the fourth equality follows from separating the case $h=i$, the fifth equality follows from cancellation via \eqref{eq:Sig3Rel}, the sixth equality follows from \eqref{eq:triplepf}, the seventh equality follows from reindexing, the eighth equality changes $T_{r,i}$ in the second term with $-T_{i,r}$, and the ninth equality follows from cancellation via \eqref{eq:Sigma3Theta} and \eqref{eq:Sigma5Theta}. Notice that we can take $r,s,h\neq i$ in the second term of the eighth equality as the pfaffian will be zero otherwise. 

Now we show \eqref{eq:e3}. We show that the coefficient of $v_{1,2}^j$ is the same on both sides of \eqref{eq:e3}; the remaining cases are similarly checked. The coefficient of $v_{1,2}^j$ on the left hand side of \eqref{eq:e3} is computed as in \eqref{eq:e2}, and is equal to
\begin{align*}
&\delta_{i,j}\sum_{r=1}^m\sum_{h=1}^m \sigma_{r,j,h} c_{r,j,2} c_{h,j,1} \pf{r,j,h}{T} + \sum_{\underset{r\neq j}{r=1}}^m T_{r,j} d_{1,2}^{j,i,r} + \sum_{r=1}^m \sigma_{j,i,r} \pf{j,i,r}{T} (c_{r,j,1}c_{j,j,2} - c_{j,j,1}c_{r,j,2})\\
= & \delta_{i,j}\sum_{r=1}^m\sum_{h=1}^m \sigma_{r,j,h} c_{r,j,2} c_{h,j,1} \pf{r,j,h}{T} + \sum_{\underset{r\neq j}{r=1}}^m T_{r,j} d_{1,2}^{j,i,r}.
\end{align*}

Notice that $i\neq j$, since in \eqref{eq:e3}, the element $g_2^j$ is only defined for $1\leq j \leq t$ and we have $t+1\leq i \leq m$. Hence the coefficient above is computed as in \eqref{eq:e2} and is equal to
\begin{align*}
    &\sum_{\underset{s\neq j}{s=1}}^m c_{s,j,1} \sum_{\underset{h\neq j}{h=1}}^m c_{h,j,2} \sum_{\underset{r\neq j}{r=1}}^m \sigma_{i,r,h} \sigma_{i,r,h,s,j} T_{r,j} \pf{i,r,h,s,j}{T}\\
    = &\sum_{\underset{s\neq j}{s=1}}^m c_{s,j,1} \sum_{\underset{h\neq j}{h=1}}^m c_{h,j,2} \sum_{\underset{r\neq j}{r=1}}^m \sigma_{i,r,h} \sigma_{i,r,h,s,j}  \pf{i,r,h,s,j}{T} \sum_{l=1}^3 c_{j,r,l}z_l\\
    = &z_1\sum_{\underset{h\neq j}{h=1}}^m c_{h,j,2} \left(\sum_{\underset{s\neq j}{s=1}}^m  \sum_{\underset{r\neq j}{r=1}}^m c_{s,j,1} c_{j,r,1}\sigma_{i,r,s} \sigma_{i,r,s,h,j} \pf{i,r,s,h,j}{T} \right)\\
    &+ z_2\sum_{\underset{s\neq j}{s=1}}^m c_{s,j,1} \left(\sum_{\underset{h\neq j}{h=1}}^m \sum_{\underset{r\neq j}{r=1}}^m c_{h,j,2} c_{j,r,2}\sigma_{i,r,s} \sigma_{i,r,s,h,j} \pf{i,r,s,h,j}{T} \right)\\
    &+z_3\left(\sum_{\underset{s\neq j}{s=1}}^m c_{s,j,1} \sum_{\underset{h\neq j}{h=1}}^m c_{h,j,2} \sum_{\underset{r\neq j}{r=1}}^m c_{j,r,3}\sigma_{i,r,s} \sigma_{i,r,s,h,j} \pf{i,r,s,h,j}{T} \right)\\
    = &z_1\left(\sum_{\underset{s\neq j}{s=1}}^m c_{s,j,1} 0 \right) + z_2\left(\sum_{\underset{h\neq j}{h=1}}^m c_{h,j,2} 0 \right) + z_3\left(\sum_{\underset{s\neq j}{s=1}}^m c_{s,j,1} \sum_{\underset{h\neq j}{h=1}}^m c_{h,j,2} \sum_{\underset{r\neq j}{r=1}}^m c_{j,r,3}\sigma_{i,r,s} \sigma_{i,r,s,h,j} \pf{i,r,s,h,j}{T} \right)\\
    = &\left(\sum_{\underset{s\neq j}{s=1}}^m c_{s,j,1} \sum_{\underset{h\neq j}{h=1}}^m c_{h,j,2} \sum_{\underset{r\neq j}{r=1}}^m c_{j,r,3}\sigma_{i,r,s} \sigma_{i,r,s,h,j} \pf{i,r,s,h,j}{T} \right) z_3\\
    = &\left(\sum_{\underset{r\neq j}{r=1}}^m c_{r,j,3} \sum_{\underset{h\neq j}{h=1}}^m \sigma_{i,r,h} \sum_{\underset{s\neq j}{s=1}}^m \sigma_{i,r,h,s,j} \pf{i,r,h,s,j}{T} c_{s,j,1}c_{h,j,2} \right)z_3\\
    = &\left(\sum_{\underset{r\neq j}{r=1}} c_{r,j,3} d^{j,i,r}_{1,2} \right)z_3
\end{align*}
where the first equality follows from \eqref{eq:Tji}, the second equality follows from rearranging the summations, the third equality follows from \eqref{eq:Sigma3Theta} and \eqref{eq:Sigma5Theta}, the fourth equality follows from cancellation, the fifth equality follows from rearranging the summation, and the sixth equality follows from definition of $d_{1,2}^{j,i,r}$. 
Hence the nonzero coefficients of $v_{1,2}^j$, $v_{1,3}^j$, and $v_{2,3}^j$ in \eqref{eq:e3} are, respectively, 
\[
\left(\sum_{\underset{r\neq j}{r=1}} c_{r,j,3} d^{j,i,r}_{1,2}\right)z_3, \quad \left(-\sum_{\underset{h\neq j}{h=1}} c_{h,j,2} d^{j,i,h}_{1,3}\right)z_2, \quad \mathrm{ and\;} \left(\sum_{\underset{s\neq j}{s=1}} c_{s,j,1} d^{j,i,s}_{2,3}\right)z_1.
\]
One can check that the quantities in the parentheses in the display above are the same.
\end{proof}

\section{Tor Algebras}

In this section we study the Tor algebra of ideals obtained by trimming the pfaffian generators of a Gorenstein ideal of grade 3. As a consequence of the main result of this section, we show that the conjectures in \cite[7.4 Conjectures]{Linkage} on ideals of class $\mathbf{G}(r)$ hold true in our context. Moreover, it will also follow that $r$ is always given by the lower bound provided in \cite[(2.4) Theorem]{trimming}.

\begin{notation}\label{notation}
Let $T$ be a skew-symmetric matrix of odd size $m$ with entries in $\m$, and let $t$ be an integer between 1 and $m$. Recall by \Cref{thm:res} that $Q_1$ is the matrix 
\[
Q_1=\begin{pmatrix}q_1^1\\\vdots\\ q_1^t\end{pmatrix}=\begin{pmatrix}c_{1,1,1}&c_{2,1,1}&\cdots&c_{t,1,1}\\
c_{1,1,2}&c_{2,1,2}&\cdots&c_{t,1,2}\\
c_{1,1,3}&c_{2,1,3}&\cdots&c_{t,1,3}\\
\vdots&\vdots&\vdots&\vdots\\
c_{1,t,1}&c_{2,t,1}&\cdots&c_{t,t,1}\\
c_{1,t,2}&c_{2,t,2}&\cdots&c_{t,t,2}\\
c_{1,t,3}&c_{2,t,3}&\cdots&c_{t,t,3}\\
\end{pmatrix},
\]
where the entries are defined by \eqref{eq:Tji}. The matrices appearing in the displays of \Cref{prop:5G} and \Cref{lem:Cond1&2} are submatrices of the block $q_1^k\otimes\kk$ for $k=1,\ldots,t$, in which the entries are denoted with a bar to indicate the residue class modulo $\m$. We denote by $p(T,t)$ the number of pivot columns of $Q_1\otimes_R\kk$ among the last $m-t$ columns. Moreover, we denote by $(C_\bullet,\partial_\bullet)$ the resolution given in \Cref{thm:res}.
\end{notation}

\begin{convention}\label{convention}
To make the results of this section easier to read, we will denote the classes $\mathbf{H}(0,0)$ and $\mathbf{H}(0,1)$ by $\mathbf{G}(0)$ and $\mathbf{G}(1)$, respectively. In particular, we say that ideals of class $\mathbf{H}(0,0)$ and $\mathbf{H}(0,1)$ are of class $\mathbf{G}$.
\end{convention}

\begin{lemma}\label{lem:Cond1&2}
Let $T$ be a skew-symmetric matrix of size $5$ with entries in $\m$ and let $t$ be an integer between 1 and $5$. Consider the following two conditions:
\begin{enumerate}
\item For every $i,j,k$ distinct with $t+1\leq i,j\leq 5$ and $1\leq k\leq t$, set $\{r,h\}=[5]\backslash\{k,i,j\}$. The $2\times 2$ minors of the matrix
\[
\begin{pmatrix}\ov{c_{h,k,1}}&\ov{c_{h,k,2}}&\ov{c_{h,k,3}}\\
\ov{c_{r,k,1}}&\ov{c_{r,k,2}}&\ov{c_{r,k,3}}
\end{pmatrix}
\]
are zero.
\item For every $i\in[5]$ and $k\in[5]$ with $1\leq k\leq t$ and $t+1\leq i\leq 5$, set $\{h,s,r\}=[5]\backslash\{i,k\}$. The determinant of the matrix
\[
\begin{pmatrix}\ov{c_{h,k,1}}&\ov{c_{s,k,1}}&\ov{c_{r,k,1}}\\
\ov{c_{h,k,2}}&\ov{c_{s,k,2}}&\ov{c_{r,k,2}}\\
\ov{c_{h,k,3}}&\ov{c_{s,k,3}}&\ov{c_{r,k,3}}
\end{pmatrix}
\]
is zero.
\end{enumerate}
If $t\leq 3$, then condition (1) implies condition (2).
\end{lemma}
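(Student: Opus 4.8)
The plan is to fix distinct $i,j,k$ with $t+1\le i,j\le 5$ and $1\le k\le t$, and to show that condition (1) applied to all such triples forces the columns of the $3\times 3$ matrix in condition (2) to be linearly dependent over $\kk$, hence its determinant vanishes. The key observation is that since $t\le 3$, whenever we pick $i,k$ with $1\le k\le t$ and $t+1\le i\le 5$ and write $\{h,s,r\}=[5]\setminus\{i,k\}$, at least two of the three indices $h,s,r$ lie in $\{t+1,\ldots,5\}$: indeed $[5]\setminus\{i,k\}$ has three elements, of which at most one (namely the unique element of $\{1,\ldots,t\}\setminus\{k\}$ that survives, if any) can fail to lie in $\{t+1,\ldots,5\}$ — and because $t\le 3$ this bookkeeping shows at least two of $h,s,r$ exceed $t$. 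So after relabelling we may assume $s,r\ge t+1$ (and $h$ arbitrary in $[5]\setminus\{i,k\}$).

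First I would apply condition (1) to the triple $(s,r,k)$ — which is legitimate since $s,r$ are distinct, both $\ge t+1$, and $k\le t$ with $k\notin\{s,r\}$ — noting that $\{h',k'\}=[5]\setminus\{k,s,r\}=\{i,h\}$ (with $k'=k$ already excluded). Thus condition (1) tells us that the $2\times 2$ minors of
\[
\begin{pmatrix}\ov{c_{i,k,1}}&\ov{c_{i,k,2}}&\ov{c_{i,k,3}}\\ \ov{c_{h,k,1}}&\ov{c_{h,k,2}}&\ov{c_{h,k,3}}\end{pmatrix}
\]
all vanish, i.e.\ the rows indexed by $i$ and $h$ of the matrix $(\ov{c_{\bullet,k,\bullet}})$ are $\kk$-proportional. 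Next, whenever two of $h,s,r$ are $\ge t+1$ — say $s,r\ge t+1$ as arranged — I would additionally apply condition (1) to the triple $(i,r,k)$ (if $i\ne r$, which holds since $i=k'$ was removed and $r\ne k$; here $i,r\ge t+1$ and $k\le t$), giving that rows $i$ and $s$ (where $\{h',k'\}=[5]\setminus\{k,i,r\}=\{h,s\}$) are proportional, and similarly to $(i,s,k)$ to get rows $i$ and $r$ proportional. Combining these, the three rows of the matrix in condition (2) — which are columns after transposing, but rank is transpose-invariant — are all $\kk$-scalar multiples of one common row $\ov{c_{i,k,\bullet}}$, so the matrix has rank $\le 1$ and in particular its determinant is zero.

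The main obstacle, and the only place real care is needed, is the index bookkeeping: one must verify that for $t\le 3$ the set $[5]\setminus\{i,k\}$ really does contain at least two indices from $\{t+1,\ldots,5\}$, and that the auxiliary triples to which condition (1) is applied are all \emph{valid} — i.e.\ consist of three pairwise-distinct indices, two of them in $\{t+1,\ldots,5\}$ and the third in $\{1,\ldots,t\}$, with the third distinct from the first two. For $t=1$ this is immediate ($k=1$, and $\{h,s,r\}=[5]\setminus\{1,i\}\subseteq\{2,\ldots,5\}$ already lies entirely in the trimmed range); for $t=2$ and $t=3$ one checks the handful of cases by hand, using that $i\ge t+1$ removes one high index and $k\le t$ removes one low index, leaving at least $5-t-1\ge 1$ further high indices plus the high indices not equal to $i$. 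Once this combinatorial fact is pinned down, the implication is exactly the rank-$\le 1$ argument above. It is also worth remarking why the hypothesis $t\le 3$ is needed: for $t=4$ or $t=5$ one may be unable to find even one valid triple for condition (1) governing the relevant rows, so the implication genuinely can fail.
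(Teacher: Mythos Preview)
Your combinatorial claim is wrong for $t=3$, and this breaks the argument. You assert that at most one element of $\{1,\ldots,t\}\setminus\{k\}$ survives in $[5]\setminus\{i,k\}$, but $\{1,\ldots,t\}\setminus\{k\}$ has $t-1$ elements, which is $2$ when $t=3$. Concretely, take $t=3$, $k=1$, $i=4$: then $\{h,s,r\}=\{2,3,5\}$, and only \emph{one} of these (namely $5$) lies in $\{t+1,\ldots,5\}$. In this situation you can only apply condition~(1) to the single triple $(i,5,k)=(4,5,1)$, which tells you the columns indexed by $2$ and $3$ are proportional; you cannot run the other two applications of (1), since the triples $(i,r,k)$ and $(i,s,k)$ with $r,s\in\{2,3\}$ fail the requirement $r,s\ge t+1$. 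So your rank-$\le 1$ strategy cannot be completed as written.

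The fix is simply to aim for less. You only need the determinant to vanish, not rank $\le 1$, and for that a single application of (1) suffices: since $t\le 3$ there is always at least \emph{one} index in $\{h,s,r\}$ lying in $\{t+1,\ldots,5\}$ (your own count $5-t-1\ge 1$ at the end shows exactly this, not the ``at least two'' you claimed earlier). Call that index $s$; then expanding the $3\times 3$ determinant along the $s$-column expresses it as a combination of the $2\times 2$ minors of the $h,r$-columns, and those vanish by condition~(1) applied to $(i,s,k)$. This is precisely the paper's argument. For $t\le 2$ your approach does go through (when the pivot row $h$ is zero the determinant vanishes trivially, and otherwise your three proportionalities chain up), but the case $t=3$ forces you back to the cofactor-expansion route.
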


\begin{proof}
Since $t\leq 3$, we may choose $s\in[5]\backslash{\{i,k\}}$ such that $t+1\leq s\leq 5$. Expanding the determinant of the matrix in \emph{(2)} along the second column immediately shows that \emph{(1)} implies \emph{(2)}.
\end{proof}

\begin{lemma}\label{lem:onlyG}
Let $I$ be a Gorenstein ideal of grade 3 generated by $((-1)^{i+1}\pf{i}{T})_{i=1,\ldots,5}$ for some skew-symmetric matrix $T$ of size $5$ with entries in $\m$. Let $t$ be an integer between 1 and $5$. Let $J$ be the ideal obtained by trimming the first $t$ generators of $I$. Then $\ov{C_1}\ov{C_2}\subseteq\ov{F_3}$ if and only if condition (2) in \Cref{lem:Cond1&2} is satisfied, where $C_1,C_2$ are defined in \Cref{notation}, and $F_3$ is defined in \Cref{ch:GorRes}.
\end{lemma}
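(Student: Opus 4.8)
The plan is to unwind the definition of the product $\ov{C_1}\ov{C_2}$ component by component, using the explicit formulae for products $a$, $b$, and $e$ from \Cref{thm:prod}, and to identify precisely which terms land outside $\ov{F_3}$. Recall that $C_1 = F_1' \oplus (\oplus_{k=1}^t G_1^k)$ and $C_2 = F_2 \oplus (\oplus_{k=1}^t G_2^k)$, so a product in $C_1 \cdot C_2$ lands in $C_3 = F_3 \oplus (\oplus_{k=1}^t G_3^k)$. Passing to residues modulo $\m$, all products of the form $G_1^k \cdot C_2$ and $F_1' \cdot G_2^k$ vanish since the relevant structure constants ($c_{i,j,l}$, $z_l$, the entries of $T$, and the $d$'s, which all lie in $\m$) reduce to zero; the only products that can contribute are $\ov{F_1'}\cdot\ov{F_2}$, governed by product $e$. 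So the statement reduces to: the $G_3^k$-components of $\ov{e_i \cdot f_j}$ all vanish if and only if condition (2) holds.

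First I would recall from product $e$ that for $t+1 \leq i \leq m$ and $1 \leq j \leq t$,
\[
e_i \cdot f_j = -\sum_{r=1}^m c_{r,j,3}\, d^{j,i,r}_{1,2}\, w^j,
\]
and for $t+1 \leq j \leq m$ the product $e_i \cdot f_j = e_i \cdot_F f_j \in F_3$, which contributes nothing to any $G_3^k$. Thus the $G_3^j$-component of $\ov{C_1}\ov{C_2}$ is controlled by the residue of $\sum_r c_{r,j,3}\, d^{j,i,r}_{1,2}$ for each pair $i \in \{t+1,\dots,m\}$, $j \in \{1,\dots,t\}$; recall $m=5$ here. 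Specializing $m=5$, I would expand $d^{j,i,r}_{1,2} = \sum_{s} \sigma_{i,r,s}\sum_{h}\sigma_{i,r,s,h,j}\pf{i,r,s,h,j}{T}c_{s,j,2}c_{h,j,1}$ and observe that $\pf{i,r,s,h,j}{T}$ is the pfaffian of a submatrix of a $5\times 5$ matrix with five distinct indices $i,r,s,h,j$ removed — i.e., it is $\pf{\{1,\dots,5\}}{T} = \mathcal P[\emptyset] = 1$ — and is nonzero exactly when $\{i,r,s,h,j\} = \{1,\dots,5\}$. This collapses the triple sum dramatically: with $i,j$ fixed and distinct, the indices $r,s,h$ must be exactly the three remaining elements of $[5]\setminus\{i,j\}$ in some order, so $d^{j,i,r}_{1,2}$ becomes (up to a sign) a $2\times 2$ minor of the matrix $\begin{pmatrix}\ov{c_{s,j,1}}&\ov{c_{s,j,2}}\\ \ov{c_{h,j,1}}&\ov{c_{h,j,2}}\end{pmatrix}$ summed over the two orderings of the remaining pair, and $\sum_r c_{r,j,3}d^{j,i,r}_{1,2}$ becomes a signed sum matching exactly the cofactor expansion (along the third row or column, depending on bookkeeping) of the $3\times 3$ determinant in condition (2) of \Cref{lem:Cond1&2}, with $k = j$ and the index set $\{h,s,r\} = [5]\setminus\{i,j\}$. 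The equivalence then follows: the $G_3^j$-component vanishes for all such $i$ if and only if all these determinants vanish, which is precisely condition (2).

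The main obstacle I anticipate is the sign bookkeeping: one must verify that the collection of signs $\sigma_{i,r,s}\sigma_{i,r,s,h,j}$ appearing as $(r,s,h)$ ranges over permutations of $[5]\setminus\{i,j\}$, weighted by $c_{r,j,3}$, reassembles coherently into $\pm\det$ of the stated $3\times 3$ matrix rather than into some other alternating combination (or zero identically). I would handle this using the explicit expressions \eqref{eq:Sigma3Theta} and \eqref{eq:Sigma5Theta} for the signs in terms of Heaviside functions, together with the relations \eqref{eq:Sig3Rel} and the symmetry of $\sigma_{i,j,r,h,k}$ in its first three arguments, reducing to checking consistency on a single representative ordering and then tracking how transpositions of $r,s,h$ act on both the sign and the corresponding $2\times 2$ minor — transpositions of $s$ and $h$ flip the minor, transpositions involving $r$ permute which minor appears, exactly as in a Laplace expansion. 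A clean way to avoid excessive casework is to note that for $m=5$ the pfaffian identity \eqref{eq:triplepf} with the pfaffian of the full matrix on the right degenerates, and the content of the claim is really just that the top piece of Vandebogert's product $e$ computes a classical $3\times3$ determinant; once one pins the sign on one configuration the rest propagates by antisymmetry.
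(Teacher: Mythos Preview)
Your approach matches the paper's: both reduce to product $e$, use that for $m=5$ the Pfaffian $\pf{i,r,s,h,j}{T}$ equals $1$ exactly when the indices exhaust $[5]$, and identify the resulting six-term sum $\sum_r \ov{c_{r,j,3}}\,\ov{d^{j,i,r}_{1,2}}$ as the cofactor expansion of the determinant in condition~(2); the paper writes out the six terms and the sign simplification explicitly rather than deferring them. One correction to your preliminary step: the constants $c_{i,j,l}$ need not lie in $\m$ (they are typically units, since $T$ has entries in $\m\setminus\m^2$), so your stated reason for the vanishing of products $f,g,h,i$ modulo $\m$ is wrong as written---the vanishing actually comes from the factors $z_l$, $y_i$, and $\pf{i,j,r}{T}$ (the latter being a single entry of $T$ when $m=5$), all of which do lie in $\m$.
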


\begin{proof}
In product $e$, for $\ov{C_1}\ov{C_2}\subseteq\ov{F_3}$, we must have $\sum_{r=1}^m \ov{c_{r,k,3} d^{k,i,r}_{1,2}}=0$ for all $t+1\leq i\leq m$ and $1\leq k\leq t$ with $k\in[5]$. Fixing $\{r,h,s\}=[5]\backslash\{i,k\}$, we can expand this summation as 
\begin{align*}
&\ov{c_{r,k,3}}\left( \sigma_{i,r,h}\sigma_{i,r,h,s,k} \ov{c_{s,k,1}}\ov{c_{h,k,2}} + \sigma_{i,r,s}\sigma_{i,r,s,h,k} \ov{c_{h,k,1}}\ov{c_{s,k,2}} \right)\\
&+ \ov{c_{h,k,3}}\left( \sigma_{i,h,r}\sigma_{i,h,r,s,k} \ov{c_{s,k,1}}\ov{c_{r,k,2}} + \sigma_{i,h,s}\sigma_{i,h,s,r,k} \ov{c_{r,k,1}}\ov{c_{s,k,2}} \right)\\
&+ \ov{c_{s,k,3}}\left( \sigma_{i,s,r}\sigma_{i,s,r,h,k} \ov{c_{h,k,1}}\ov{c_{r,k,2}} + \sigma_{i,s,h}\sigma_{i,s,h,r,k} \ov{c_{r,k,1}}\ov{c_{h,k,2}} \right)\\
= &\sigma_{i,r,h}\sigma_{i,r,h,s,k}\ov{c_{r,k,3}}\left(\ov{c_{s,k,1}}\ov{c_{h,k,2}} - \ov{c_{h,k,1}}\ov{c_{s,k,2}} \right)\\
&+ \sigma_{i,h,r}\sigma_{i,h,r,s,k}\ov{c_{h,k,3}}\left(\ov{c_{s,k,1}}\ov{c_{r,k,2}} - \ov{c_{r,k,1}}\ov{c_{s,k,2}} \right)\\
&+ \sigma_{i,s,r}\sigma_{i,s,r,h,k}c_{s,k,3}\left(\ov{c_{h,k,1}}\ov{c_{r,k,2}} - \ov{c_{r,k,1}}\ov{c_{h,k,2}} \right)\\
= &-\sigma_{i,r,h}\sigma_{i,r,h,s,k}(-\ov{c_{r,k,3}}(\ov{c_{s,k,1}}\ov{c_{h,k,2}} - \ov{c_{h,k,1}}\ov{c_{s,k,2}} ) + \ov{c_{h,k,3}}(\ov{c_{s,k,1}}\ov{c_{r,k,2}} - \ov{c_{r,k,1}}\ov{c_{s,k,2}} ) \\&+ \ov{c_{s,k,3}}(\ov{c_{h,k,1}}\ov{c_{r,k,2}} - \ov{c_{r,k,1}}\ov{c_{h,k,2}} ) )\\
= &-\sigma_{i,r,h}\sigma_{i,r,h,s,k}(\ov{c_{h,k,3}}(\ov{c_{s,k,1}}\ov{c_{r,k,2}} - \ov{c_{r,k,1}}\ov{c_{s,k,2}} ) - \ov{c_{r,k,3}}(\ov{c_{s,k,1}}\ov{c_{h,k,2}} - \ov{c_{h,k,1}}\ov{c_{s,k,2}} ) \\&+ \ov{c_{s,k,3}}(\ov{c_{h,k,1}}\ov{c_{r,k,2}} - \ov{c_{r,k,1}}\ov{c_{h,k,2}} ) )\\
= &-\sigma_{i,r,h}\sigma_{i,r,h,s,k}\;\mathrm{det}
\begin{pmatrix}\ov{c_{h,k,1}}&\ov{c_{s,k,1}}&\ov{c_{r,k,1}}\\
\ov{c_{h,k,2}}&\ov{c_{s,k,2}}&\ov{c_{r,k,2}}\\
\ov{c_{h,k,3}}&\ov{c_{s,k,3}}&\ov{c_{r,k,3}}
\end{pmatrix},
\end{align*}
which shows that condition \emph{(2)} in \Cref{lem:Cond1&2} is equivalent to $\ov{C_1}\ov{C_2}\subseteq\ov{F_3}$.
\end{proof}

\begin{notation} Let $C_\bullet$ be the resolution constructed in \Cref{thm:res}. We denote by $\ee_1,\ldots,\ee_m$ the basis of $C_\bullet\otimes_R\kk$ induced by $e_1,\ldots, e_m$. A similar notation is enforced for the remaining basis elements of $C_\bullet$.
\end{notation}

\begin{lemma}\label{lem:ChangeOfBasis}
Let $I$ be a Gorenstein ideal of grade 3 generated by $((-1)^{i+1}\pf{i}{T})_{i=1,\ldots,m}$ for some skew-symmetric matrix $T$ of odd size and with entries in $\m$. Let $t$ be an integer between 1 and $m$. Let $J$ be the ideal obtained by trimming the first $t$ generators of $I$. Let $C_\bullet$ be the resolution of $R/J$ constructed in \Cref{thm:res}. If the only nonzero products on $C_\bullet\otimes_R\kk$ are $\ee_i\ff_i$ for $i\geq t+1$, then the ideal $J$ is of class $\mathbf{G}(m-t-p(T,t))$.
\end{lemma}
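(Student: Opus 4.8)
The plan is to identify $\Tor^R(R/J,\kk)$ with the homology algebra of $C_\bullet\otimes_R\kk$ — legitimate since $C_\bullet$ is a DG algebra resolution of $R/J$ — read off the multiplicative structure, and compare it with the product table of \ref{chk:ProdTables} (I abbreviate $\Tor_i\colonequals\Tor^R_i(R/J,\kk)$). First I would check that $\partial_1\otimes\kk=0$ and $\partial_3\otimes\kk=0$: the entries of $\partial_1$, of $D_3$, and of the $\delta_3^k$ are among the $y_i$, the $y_kz_l$, and the $z_l$, all in $\m$, while the entries of $Q_2$ are the constants $d^k_{\alpha,\beta}$, which lie in $\m$ because they are $R$-combinations of pfaffians $\pf{i,k,r}{T}$ and such a pfaffian lies in $\m$ once $m\geq 5$. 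Since the blocks $D_2'$ and $\bigoplus_k\delta_2^k$ of $\partial_2$ also have entries in $\m$, the map $\partial_2\otimes\kk$ is $\left(\begin{smallmatrix}0&0\\-\overline Q&0\end{smallmatrix}\right)$, where $\overline Q\colonequals Q_1\otimes_R\kk$. Hence
\[
\Tor_1\cong(F_1'\otimes\kk)\oplus\operatorname{coker}\overline Q,\qquad\Tor_2\cong\ker\overline Q\oplus\textstyle\bigoplus_k(G_2^k\otimes\kk),\qquad\Tor_3\cong(F_3\otimes\kk)\oplus\textstyle\bigoplus_k(G_3^k\otimes\kk),
\]
so that $\dim_\kk\Tor_3=t+1$.

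Next I would use the hypothesis to force $\overline Q$ to be surjective. If $\operatorname{coker}\overline Q\neq 0$, choose a nonzero class in it; written in the basis of the $\uu_l^k$ it has a nonzero coefficient, say on $\uu_{l_0}^{k_0}$, and its product with $\vv^{k_0}_{\alpha,\beta}$, where $\{\alpha,\beta\}=\{1,2,3\}\setminus\{l_0\}$, equals — by the Koszul product on $G_\bullet^{k_0}$, together with $[\mathsf{w}^{k_0}]\neq 0$ in $\Tor_3$ — a nonzero multiple of $\mathsf{w}^{k_0}$, a nonzero product not of the form $\ee_i\ff_i$, contradicting the hypothesis. Therefore $\operatorname{coker}\overline Q=0$, so $\Tor_1=F_1'\otimes\kk$ has dimension $m-t$ and $\dim_\kk\ker\overline Q=m-3t$. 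I would then compute the rank of the multiplication map $\mu\colon\Tor_2\to\Hom_\kk(\Tor_1,\Tor_3)$, which by definition is the parameter $r$ of the prospective class $\mathbf{G}(r)$. The hypothesis makes every product of an $\ee_j$ ($j>t$) with a $\vv^k_{\alpha,\beta}$ vanish, so $\mu$ annihilates the summand $\bigoplus_k(G_2^k\otimes\kk)$; and for $\xi=\sum_i a_i\ff_i\in\ker\overline Q$, product $e$ of \Cref{thm:prod} together with the hypothesis give $\ee_j\cdot\ff_i=\delta_{ij}\,\gg$ when $i,j>t$ and $\ee_j\cdot\ff_i=0$ when $i\leq t$, so $\mu(\xi)$ is the functional $\ee_j\mapsto a_j\,\gg$. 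Thus $\mu$ factors through the projection $\pi\colon\kk^m\to\kk^{m-t}$ onto the coordinates $\ff_{t+1},\dots,\ff_m$, whence $r=\rank\mu=\dim_\kk\pi(\ker\overline Q)$.

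To finish the count, apply rank--nullity to $\pi|_{\ker\overline Q}$: its kernel is $\ker\overline Q\cap\langle\ff_1,\dots,\ff_t\rangle$, which is $\ker\overline Q^{(1)}$ for $\overline Q^{(1)}$ the submatrix of the first $t$ columns of $\overline Q$, so $r=(m-3t)-(t-\rank\overline Q^{(1)})$. Because $\overline Q$ has full row rank $3t$, its reduced row echelon form has $3t$ pivot columns, $p(T,t)$ of them among the last $m-t$ columns and hence $3t-p(T,t)$ among the first $t$; since the pivots among the first $t$ columns span their column space, $\rank\overline Q^{(1)}=3t-p(T,t)$, and therefore $r=m-t-p(T,t)$. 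Finally, since $\Tor_1=F_1'\otimes\kk$ and the hypothesis forces $\ee_i\ee_j=0$ for all $i,j$, we have $\Tor_1\cdot\Tor_1=0$; combined with the diagonal form of $\Tor_1\cdot\Tor_2$ just computed (all nonzero products equal to the single class $\gg$), and noting that all products landing in degree $>3$ vanish for degree reasons, an appropriate change of basis in $\Tor_1$ and $\Tor_2$ realizes exactly the product table of $\mathbf{G}(r)$. By \ref{chk:ProdTables} and \Cref{convention}, $J$ is of class $\mathbf{G}(m-t-p(T,t))$.

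I expect the main obstacle to be the second paragraph: rigorously justifying that the hypothesis forces $\overline Q$ to be surjective — equivalently, that no product originating in the Koszul summands $G_\bullet^k$ survives in $\Tor_\bullet$ — and that, granting this, the products $\ee_j\cdot\ff_i$ are precisely the diagonal ones. Once $\Tor_\bullet$ and its product have been pinned down, the pivot-count identity $\rank\overline Q^{(1)}=3t-p(T,t)$ and the comparison with the classification table are routine.
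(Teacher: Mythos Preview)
There is a genuine gap in your second paragraph: the claim that the hypothesis forces $\overline Q$ to be surjective is false, because the product you invoke does not exist. The multiplication on $C_\bullet$ restricted to $G_\bullet^k$ is \emph{not} the Koszul product; by product $g$ of \Cref{thm:Appendix} one has $u_l^k\cdot v_{\alpha,\beta}^k=-y_k\,(u_l^k\cdot_{G^k}v_{\alpha,\beta}^k)$, and since $y_k\in\m$ this vanishes in $C_\bullet\otimes_R\kk$. In fact every product on $C_\bullet\otimes_R\kk$ involving a $\uu$ or a $\vv$ vanishes automatically: inspecting products $b$--$d$ and $f$--$i$ of \Cref{thm:Appendix}, each coefficient is a multiple of some $z_l$, of some $y_k$, or of a sub-pfaffian $\pf{i,j,r}{T}\in\m$. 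Thus the hypothesis of the lemma imposes no constraint whatsoever on $\overline Q$; for instance $\overline Q=0$ occurs whenever the entries of $T$ lie in $\m^2$.

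The good news is that surjectivity is a red herring and your argument is easily repaired without it. Keep $\Tor_1=(F_1'\otimes\kk)\oplus\operatorname{coker}\overline Q$; by the hypothesis (or by the paragraph above) every class coming from $\operatorname{coker}\overline Q$ multiplies trivially, so $\Tor_1\cdot\Tor_1=0$ and your map $\mu$ still factors through $\pi|_{\ker\overline Q}$ with image contained in $\kk\gg$. Now redo the rank--nullity count with $\rho\colonequals\rank\overline Q$ in place of $3t$: one has $\dim_\kk\ker\overline Q=m-\rho$ and $\rank\overline Q^{(1)}=\rho-p(T,t)$ (the pivots of $\overline Q$ among the first $t$ columns), whence
\[
r=(m-\rho)-\bigl(t-(\rho-p(T,t))\bigr)=m-t-p(T,t),
\]
and the classification follows as you wrote. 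The paper reaches the same conclusion by a more hands-on route: it row- and column-reduces $\overline Q$ to split off the non-minimal summand of $C_\bullet\otimes_R\kk$, then exhibits explicit bases $\{\ee_i'\}$ and $\{\ff_j'\}_{j\notin P}$ in which the surviving products are exactly $\ee_j'\ff_j'=\gg$ for $j>t$ with $j\notin P$, of which there are $m-t-p(T,t)$.
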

\begin{proof}
Since row operations on $Q_1\otimes_R\kk$ change basis elements that do not contribute to the product, we can assume that $Q_1\otimes_R\kk$ is in row reduced echelon form. One can perform elementary column operations on $Q_1\otimes_R\kk$ so that every pivot is the only nonzero element in its row, this will allow to split off the nonminimal part of $C_\bullet\otimes_R\kk$. These column operations yield a new basis of the form
\[
\ff_j'=\ff_j+\sum_{l\in P\backslash\{j\}}\alpha_{j,l}\ff_l,\quad\alpha_{j,l}\in\kk\;\forall j,l,
\]
where $P$ is the index set of the pivot columns of $Q_1\otimes_R\kk$.\newline
Set
\[
\ee_i'=\begin{cases}
\displaystyle\ee_i-\sum_{k\not\in P}\alpha_{k,i}\ee_k\quad\mathrm{if}\;i\in P\\
\ee_i\hphantom{-\sum_{k\not\in P}\alpha_{k,i}}\;\quad\mathrm{if}\;i\not\in P.
\end{cases}
\]
We first show that $\ee_1',\ldots,\ee_m'$ is a basis. Let $b_1,\ldots, b_m\in\kk$ such that
\[
b_1\ee_1'+\cdots b_m\ee_m'=0,
\]
then
\[
\sum_{i\in P}b_i\left(\ee_i-\sum_{k\not\in P}\alpha_{k,i}\ee_k\right)+\sum_{i\not\in P}b_i\ee_i=0.
\]
If $j\in P$, then the coefficient of $\ee_j$ in the previous expression is $b_j$, therefore $b_j=0$ for $j\in P$. If $j\not\in P$, then the coefficient of $\ee_j$ is 
\[
b_j-\sum_{i\in P}b_i\alpha_{j,i}=b_j,
\]
since $b_i=0$ for $i\in P$. Therefore $b_j=0$ for $j\not\in P$ as well. This shows that $\ee_1',\ldots,\ee_m'$ is a basis.\newline
Now we study the products between the elements $\{\ee_i'\}_{i\in[m]}$ and $\{\ff_j'\}_{j\not\in P}$. Let $j\not\in P$. If $j\geq t+1$, then $\ee_j'\ff_j'=\ee_j\ff_j'=\gg$. If $j\leq t$, then $\ee_j'\ff_j'=\ee_j\ff_j'=0$. If $i\in P$, then
\[
\ee_i'\ff_j'=\alpha_{j,i}\gg-\alpha_{j,i}\gg=0.
\]
If $i\not\in P$ and $i\neq j$, then $\ee_i'\ff_j'=\ee_i\ff_j'=0$.\newline
Since a basis of $\Tor_2^R(R,J,\kk)$ is given by $\{\ff_j'\}_{j\not\in P}$, it follows that the ideal $J$ is of class $\mathbf{G}$. Since there are only $m-t-p(T,t)$ of the elements $\ff_j'$ contributing to a nonzero product, it follows that the class is $\mathbf{G}(m-t-p(T,t))$.
\end{proof}

\begin{proposition}\label{prop:5G}
Let $I$ be a Gorenstein ideal of grade 3 generated by $((-1)^{i+1}\pf{i}{T})_{i=1,\ldots,5}$ for some skew-symmetric matrix $T$ of size $5$ with entries in $\m$. Let $t$ be an integer between 1 and $5$. Let $J$ be the ideal obtained by trimming the first $t$ generators of $I$. Then $J$ is of class $\mathbf{G}$ if and only if
the $2\times 2$ minors of the matrix
\[
\begin{pmatrix}\ov{c_{h,k,1}}&\ov{c_{h,k,2}}&\ov{c_{h,k,3}}\\
\ov{c_{r,k,1}}&\ov{c_{r,k,2}}&\ov{c_{r,k,3}}
\end{pmatrix}
\]
are zero for every $i,j,k$ distinct with $t+1\leq i,j\leq 5$ and $1\leq k\leq t$, $\{h,r\}=[5]\backslash\{k,i,j\}$.
\end{proposition}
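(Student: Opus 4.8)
The plan is to reduce the statement to a combination of the three lemmas already in hand: \Cref{lem:ChangeOfBasis}, \Cref{lem:onlyG}, and \Cref{lem:Cond1&2}. The key observation is that, since $m=5$ and $T$ has entries in $\m$, the ideal $I$ (hence $J$) is contained in $\m^2$, so by the remark preceding \Cref{thm:prod} the only products on $C_\bullet\otimes_R\kk$ that can be nonzero are those coming from product $a$ (i.e.\ $\ov{C_1}\,\ov{C_1}$) and product $e$ (i.e.\ $\ov{C_1}\,\ov{C_2}$). Products $b$ are of the form $F_1'\cdot G_1^i$, and since $T$ has entries in $\m$ each $z_l$ and each entry of $Q_1$ lies in $\m$, so these vanish modulo $\m$; the remaining products listed in \Cref{thm:Appendix} similarly die modulo $\m$. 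Thus $J$ is of class $\mathbf{G}$ precisely when both $\ov{C_1}\,\ov{C_1}\subseteq\ov{F_2}$ (in fact the $F$-part of product $a$ already lands where it should, so what must vanish are the $G_2^k$-components $d^{k,i,j}_{\alpha,\beta}\bmod\m$) and $\ov{C_1}\,\ov{C_2}\subseteq\ov{F_3}$, with the surviving product then being exactly $\ee_i\ff_i$ for $i\geq t+1$.

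Next I would invoke the two lemmas to translate each condition. By \Cref{lem:onlyG}, the condition $\ov{C_1}\,\ov{C_2}\subseteq\ov{F_3}$ is equivalent to condition $(2)$ of \Cref{lem:Cond1&2}. For the product-$a$ side, I would run the analogous $5\times5$ expansion of $d^{k,i,j}_{\alpha,\beta}\bmod\m$: fixing $\{r,h\}=[5]\setminus\{k,i,j\}$, the double sum defining $d^{k,i,j}_{\alpha,\beta}$ collapses (only the terms with the running indices equal to $r$ or $h$ survive, since $\pf{i,j,r,h,k}{T}$ is the pfaffian of a $0\times0$ or $2\times2$ matrix and is zero unless $\{$the five indices$\}=[5]$), and after using the sign identities \eqref{eq:Sigma3Theta}, \eqref{eq:Sigma5Theta} one sees that $\ov{d^{k,i,j}_{\alpha,\beta}}$ is, up to sign, the $2\times2$ minor of the displayed matrix in rows $\alpha,\beta$. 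Hence the vanishing of all the $\ov{d^{k,i,j}_{\alpha,\beta}}$ is exactly condition $(1)$ of \Cref{lem:Cond1&2}. Finally, since $t\leq 5$ always and the hypotheses of \Cref{lem:Cond1&2} require $t\leq 3$ to get $(1)\Rightarrow(2)$, I would note that for the relevant range $t$ is at most $3$: indeed when $t\in\{4,5\}$ there are not two distinct indices $i,j$ with $t+1\leq i,j\leq 5$, so the condition in the proposition is vacuous and there are no product-$a$ or product-$e$ obstructions at all, forcing $J$ to be of class $\mathbf{G}$ trivially; and condition $(2)$ is likewise vacuous. So for $t\le 3$ condition $(1)$ implies condition $(2)$ by \Cref{lem:Cond1&2}, and therefore ``condition $(1)$ holds'' is equivalent to ``both $(1)$ and $(2)$ hold,'' which is equivalent to all nonzero products on $C_\bullet\otimes_R\kk$ being $\ee_i\ff_i$ with $i\geq t+1$, which by \Cref{lem:ChangeOfBasis} is equivalent to $J$ being of class $\mathbf{G}(m-t-p(T,t))$, in particular of class $\mathbf{G}$.

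The main obstacle I anticipate is the bookkeeping in the product-$a$ expansion: pinning down the exact sign $-\sigma_{i,r,h}\sigma_{i,r,h,s,k}$ (or its analogue) multiplying each $2\times2$ minor, and checking that it is a unit, so that the vanishing of $\ov{d^{k,i,j}_{\alpha,\beta}}$ is genuinely equivalent to—not merely implied by—the vanishing of the minors. This is the same type of crossing-number sign chase carried out in the proof of \Cref{lem:onlyG}, so I would model it on that argument; the one subtlety is making sure that, across the three choices of $(\alpha,\beta)\in\{(1,2),(1,3),(2,3)\}$, one recovers all three $2\times2$ minors of the $2\times3$ matrix rather than a proper subset. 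Once that is verified, the rest is a direct appeal to the cited lemmas with no further computation.
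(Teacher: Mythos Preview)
Your outline for $t\leq 3$ and $t=5$ matches the paper's argument: the coefficient of $v^k_{\alpha,\beta}$ in product $a$ collapses (since $m=5$) to $\pm(\ov{c_{h,k,\alpha}}\,\ov{c_{r,k,\beta}}-\ov{c_{r,k,\alpha}}\,\ov{c_{h,k,\beta}})$, the three choices of $(\alpha,\beta)$ recover all three $2\times 2$ minors, and then \Cref{lem:Cond1&2}, \Cref{lem:onlyG}, and \Cref{lem:ChangeOfBasis} finish the job exactly as you describe. The sign check you flag as the main obstacle is precisely the short computation the paper inserts just before treating $1\leq t\leq 3$.

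The gap is at $t=4$. You assert that for $t\in\{4,5\}$ there are ``no product-$a$ or product-$e$ obstructions at all'' and that ``condition (2) is likewise vacuous,'' but product $e$ requires only a \emph{single} index $i$ with $t+1\leq i\leq 5$. For $t=4$ the products $e_5\cdot f_j$ with $1\leq j\leq 4$ are present, and by (the proof of) \Cref{lem:onlyG} their coefficients modulo $\m$ are exactly the $3\times 3$ determinants of condition (2), which is not vacuous (take $i=5$, $k\in\{1,2,3,4\}$). When such a determinant is nonzero, the hypothesis of \Cref{lem:ChangeOfBasis} fails and you cannot conclude class $\mathbf{G}$ from it. The paper handles this with a separate pivot-column argument: a nonzero determinant forces three of the first four columns of $Q_1\otimes_R\kk$ to be pivots, skew-symmetry ($\ov{c_{i,j,l}}=-\ov{c_{j,i,l}}$) forces the fourth, and one then reads off directly that the surviving products in $\Tor$ fit the $\mathbf{G}$ pattern (yielding $\mathbf{G}(0)$ or $\mathbf{G}(1)$ according to whether $p(T,4)=1$ or $0$). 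Without this analysis your $t=4$ case is incomplete.
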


\begin{proof}
For $t=4,5$ the condition on the minors is vacuously satisfied, therefore we need to show that in this case $J$ is always of class $\mathbf{G}$.

If $t=5$, then $J$ is clearly of class $\mathbf{G}(0)$ since there are no nonzero products in the Tor algebra.

Now we assume $t=4$. By (the proof of) \Cref{lem:onlyG}, the coefficients appearing in product $e$ of $C_\bullet$ for $1\leq j\leq4$ are given by determinants.
If one of these determinants is nonzero, then there are three pivot columns of $Q_1\otimes_R\kk$ among the first four columns.  The skew-symmetry of $T$ forces the remaining fourth column to be a pivot column. If $p(T,4)=1$, then all columns of $Q_1\otimes_R\kk$ are pivot columns and therefore there are no nonzero products in the Tor algebra, hence $J$ is of class $\mathbf{G}(0)$. If $p(T,4)=0$, then the only nonzero product in the Tor algebra is the one induced by $e_5f_5$, and therefore $J$ is of class $\mathbf{G}(1)$.

If the determinants appearing in product $e$ of $C_\bullet$ for $1\leq j\leq4$ are all zero, then, by \Cref{lem:ChangeOfBasis}, $J$ is of class $\mathbf{G}(1-p(T,4))$.

Before moving on to the case $1\leq t\leq3$, we make the following observation. In product $a$ the coefficient of $v^k_{\alpha,\beta}$ is
\[
\sigma_{i,j,r} \sigma_{i,j,r,h,k} \ov{c_{h,k,\alpha}c_{r,k,\beta}} + \sigma_{i,j,h} \sigma_{i,j,h,r,k} \ov{c_{r,k,\alpha}c_{h,k,\beta}},
\]
where $h$ and $r$ are defined as in the statement of the proposition. Using \eqref{eq:Sigma3Theta} and \eqref{eq:Sigma5Theta}, the previous display is equal to
\[
\sigma_{i,j,r} \sigma_{i,j,r,h,k} \left(\ov{c_{h,k,\alpha}c_{r,k,\beta}} - \ov{c_{r,k,\alpha}c_{h,k,\beta}}\right),
\]
which is, up to a sign, one of the $2\times2$ minors of the matrices in the statement of the Proposition.

Now we assume $1\leq t\leq3$. If $J$ is of class $\mathbf{G}$, then products of type $a$ must be zero and therefore the $2\times2$ minors of the matrices in the statement of the Proposition must be zero by the previous observation.

If the minors are zero, then, by the previous observation, the products of type $a$ are also zero. Moreover, by by \Cref{lem:Cond1&2} and \Cref{lem:onlyG}, it follows that $\ov{C_1}\ov{C_2}\subseteq\ov{F_3}$. By \Cref{lem:ChangeOfBasis}, $J$ is of class $\mathbf{G}(5-t-p(T,t))$.
\end{proof}

We are finally ready to prove the main result of the paper.
\begin{theorem}\label{thm:TrimClass}
Let $I$ be a Gorenstein ideal of grade 3 generated by $((-1)^{i+1}\pf{i}{T})_{i=1,\ldots,m}$ for some skew-symmetric matrix $T$ of odd size $m\geq 5$ with entries in $\m$. Let $t$ be an integer between 1 and $m$. Let $J$ be the ideal obtained by trimming the first $t$ generators of $I$. Then $J$ is an ideal of format
\[
(1, m+2t-\rank(Q_1\otimes_R\kk), m+3t-\rank(Q_1\otimes_R\kk), 1+t).
\]
Moreover, 
\begin{enumerate}
\item If $m=5$, then $J$ is of class $\mathbf{G}$ if and only if condition (1) in \Cref{lem:Cond1&2} is satisfied. 
\item If $m\geq7$, then $J$ is of class $\mathbf{G}$.
\end{enumerate}
Furthermore, if $J$ is of class $\mathbf{G}(r)$, then $r=m-t-p(T,t)$, where $Q_1$ and $p(T,t)$ are defined in \Cref{notation}.
\end{theorem}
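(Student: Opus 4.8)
The plan is to assemble the theorem from the structural results already in place, using \Cref{thm:res} for the format, the product computations in \Cref{thm:prod} for the multiplicative structure, and \Cref{prop:5G} together with \Cref{lem:ChangeOfBasis} for the classification. First I would establish the format. The resolution $(C_\bullet,\partial_\bullet)$ of \Cref{thm:res} has terms $F_1'\oplus(\oplus_{k=1}^t G_1^k)$, $F_2\oplus(\oplus_{k=1}^t G_2^k)$, and $F_3\oplus(\oplus_{k=1}^t G_3^k)$, which have ranks $(m-t)+3t=m+2t$, $m+3t$, and $1+t$ respectively. This resolution need not be minimal: the only entries of the differentials that can be units lie in the block $Q_1$ (equivalently $q_1^k$) entering $\partial_2$, since $T$, $D_1$, $D_3$, the Koszul differentials $\delta^k$, and $Q_2$ all have entries in $\m$. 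Splitting off the nonminimal summands — i.e., cancelling $\rank(Q_1\otimes_R\kk)$ free summands from $C_2$ and the same number from $C_1$ — produces a minimal free resolution of format $(1,\,m+2t-\rank(Q_1\otimes_R\kk),\,m+3t-\rank(Q_1\otimes_R\kk),\,1+t)$, as claimed. I would note that $y_k\in\m^2$ for all $k$ (each $\pf{k}{T}$ is a pfaffian of a skew-symmetric matrix with entries in $\m$, hence lies in $\m^2$), so the trimmed ideal $J$ is contained in $\m^2$ and the Remark preceding \Cref{thm:prod} applies: only the products $F_1'\cdot F_1'$ and $F_1'\cdot F_2$ can contribute nontrivially to the Tor algebra.

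Next I would treat the classification, splitting into the cases $m=5$ and $m\geq 7$. For $m=5$ the statement that $J$ is of class $\mathbf{G}$ iff condition (1) of \Cref{lem:Cond1&2} holds is exactly \Cref{prop:5G}, since the $2\times 2$ minors appearing there are precisely those in condition (1). For $m\geq 7$ I would argue that $J$ is always of class $\mathbf{G}$: the relevant products are computed in \Cref{thm:prod}, and in product $a$ the coefficient of $v^k_{\alpha,\beta}$ in $\ee_i\cdot\ee_j$ is, by the computation in \Cref{prop:5G}, a sum of terms $\sigma_{i,j,r}\sigma_{i,j,r,h,k}\pf{i,j,r,h,k}{T}(c_{r,k,\beta}c_{h,k,\alpha}-c_{h,k,\beta}c_{r,k,\alpha})$ over pairs $\{r,h\}$; each such summand carries a pfaffian $\pf{i,j,r,h,k}{T}$ of a submatrix of $T$ of size $m-5$. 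When $m\geq 7$ this is the pfaffian of a skew-symmetric matrix with entries in $\m$, hence lies in $\m$, so modulo $\m$ the product $a$ vanishes; similarly the coefficient in product $e$ for $1\le j\le t$ is $-\sum_r c_{r,j,3}d^{j,i,r}_{1,2}$, and $d^{j,i,r}_{1,2}$ again involves such pfaffians, so it too vanishes modulo $\m$. Thus for $m\ge 7$ the only possibly-nonzero products on $C_\bullet\otimes_R\kk$ are the $\ee_i\ff_i$ with $i\geq t+1$, and \Cref{lem:ChangeOfBasis} gives class $\mathbf{G}$.

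Finally, for the value of $r$: whenever $J$ is of class $\mathbf{G}$, the hypothesis of \Cref{lem:ChangeOfBasis} is met. Indeed, when $m\ge 7$ this was just shown; when $m=5$ and condition (1) holds, \Cref{prop:5G}'s observation shows the type-$a$ products vanish, and \Cref{lem:Cond1&2} gives condition (2), so by \Cref{lem:onlyG} we have $\ov{C_1}\,\ov{C_2}\subseteq\ov{F_3}$, meaning the products of type $e$ landing in $\oplus G_3^k$ vanish mod $\m$ as well; the products $e_i\cdot_F f_j=\delta_{i,j}g$ for $i,j\ge t+1$ are the only survivors. In all cases \Cref{lem:ChangeOfBasis} yields class $\mathbf{G}(m-t-p(T,t))$, so $r=m-t-p(T,t)$. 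The main obstacle I anticipate is bookkeeping in the $m\ge 7$ case — making fully precise which pfaffian submatrices appear in each product coefficient and verifying they all have size $\ge 2$ (so their entries lie in $\m$) uniformly across products $a$ and $e$ — together with confirming that the only unit entries in $\partial_\bullet$ sit inside $Q_1$, which is what ties $\rank(Q_1\otimes_R\kk)$ to the format and $p(T,t)$ to $r$.
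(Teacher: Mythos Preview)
Your proposal follows the paper's approach closely and is essentially correct: you derive the format from \Cref{thm:res} by identifying $Q_1$ as the only source of unit entries in $\partial_\bullet$, you invoke \Cref{prop:5G} for the $m=5$ classification, and for $m\geq 7$ you observe (with more detail than the paper gives) that every coefficient in products $a$ and $e$ carries a pfaffian $\pf{i,j,r,h,k}{T}$ of a size $m-5\geq 2$ submatrix of $T$ and therefore lies in $\m$, so \Cref{lem:ChangeOfBasis} applies.

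There is one genuine gap, in your final paragraph. You assert that whenever $J$ is of class $\mathbf{G}$ the hypothesis of \Cref{lem:ChangeOfBasis} is met, and for $m=5$ you justify this via the chain ``condition (1) $\Rightarrow$ \Cref{lem:Cond1&2} $\Rightarrow$ condition (2) $\Rightarrow$ \Cref{lem:onlyG}''. But \Cref{lem:Cond1&2} only proves (1)$\Rightarrow$(2) for $t\leq 3$. When $m=5$ and $t=4$, condition (1) is vacuous (there are no two distinct indices $i,j$ with $t+1\leq i,j\leq 5$), yet condition (2) need not hold: the determinants governing product $e$ can be nonzero, in which case $\ee_5\ff_j\neq 0$ for some $j\leq 4$ in $C_\bullet\otimes_R\kk$ and the hypothesis of \Cref{lem:ChangeOfBasis} fails outright. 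The paper handles this by pointing back to the \emph{proof} of \Cref{prop:5G}, which treats $t=4$ and $t=5$ separately: for $t=4$ with a nonzero determinant one argues directly from the pivot-column structure of $Q_1\otimes_R\kk$ (skew-symmetry forces all four of the first columns to be pivots) that $r=1-p(T,4)=5-4-p(T,4)$, bypassing \Cref{lem:ChangeOfBasis} entirely. You should do the same: for $m=5$, simply cite the proof of \Cref{prop:5G} for the value of $r$ in all cases, rather than attempting to funnel everything through \Cref{lem:ChangeOfBasis}.
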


\begin{proof}
We first show that the format of $J$ is the one given by the formula above. A resolution of $R/J$ is given in \Cref{thm:res}. The ranks of the free modules in this resolution are $1$, $m+2t$, $m+3t$, and $1+t$ for degree $0$, $1$, $2$, and $3$, respectively. The only differential that may contain units is $\partial_2$. The only submatrix of $\partial_2$ that may contain units is $-Q_1$,
therefore when passing to a minimal resolution, one needs to remove exactly $\rank(Q_1\otimes_R\kk)$ copies of $R$ from the domain and codomain of $\partial_2$.

If $m=5$, then \Cref{prop:5G} shows that $J$ is of class $\mathbf{G}$ if and only if condition \emph{(1)} in \Cref{lem:Cond1&2} is satisfied. The proof of \Cref{prop:5G} shows that in this case $r=5-t-p(T,t)$. If $m\geq 7$, then the only products of the ones listed in \Cref{thm:prod} that are nonzero modulo $\m$ are the products $\ee_i\ff_i$ with $i\geq t+1$. Using \Cref{convention}, \Cref{lem:ChangeOfBasis} and the product tables in \cref{chk:ProdTables}, this forces $J$ to be of class $\mathbf{G}(m-t-p(T,t))$.
\end{proof}

\begin{remark}
We point out that, adopting the notation and hypotheses of \Cref{thm:TrimClass}, if the entries of $T$ are in $\m^2$, then the ideal $J$ is always of class $\mathbf{G}(m-t)$. This also follows from \cite[Corollary 4.12]{DGTrimmed}.
\end{remark}

\begin{remark}\label{rem:conj}
With \Cref{convention} in place, the conjectures given by Christensen, Veliche and Weyman in \cite[7.4 Conjectures]{Linkage}, together with \cite[(2.4) Theorem]{trimming}
reduce to the two conditions of \Cref{cor:conj} for ideals obtained by trimming the pfaffian generators of Gorenstein ideals. In the following corollary $\mu(J)$ denotes the minimal number of generators of $J$.


\end{remark}

\begin{corollary}\label{cor:conj}
Let $I$ be a Gorenstein ideal of grade 3 generated by $((-1)^{i+1}\pf{i}{T})_{i=1,\ldots,m}$ for some skew-symmetric matrix $T$ of odd size with entries in $\m$. Let $J$ be the ideal obtained by trimming the first $t$ generators of $I$. If $J$ is of class $\mathbf{G}(r)$, then 
\begin{enumerate}
\item if $t=1$, then $r=\mu(J)-3$.
\item if $t\geq2$, then $r\leq \mu(J)-4$.
\end{enumerate}
\end{corollary}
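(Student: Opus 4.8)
The plan is to derive everything from \Cref{thm:TrimClass}. Write $\rho\colonequals\rank(Q_1\otimes_R\kk)$. That theorem computes the format of $J$, so in particular $\mu(J)=m+2t-\rho$, and it also asserts that if $J$ is of class $\mathbf{G}(r)$ then $r=m-t-p(T,t)$. Subtracting,
\[
\mu(J)-r=(m+2t-\rho)-(m-t-p(T,t))=3t-\rho+p(T,t),
\]
so the corollary becomes a statement purely about how $p(T,t)$ compares with $\rho$ and $t$.

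The first step is the elementary remark that $\rho$ is the total number of pivot columns of $Q_1\otimes_R\kk$ (under row reduction), of which $p(T,t)$ lie among the last $m-t$ columns and the remaining $\rho-p(T,t)$ among the first $t$; since that first block has only $t$ columns, $\rho-p(T,t)\le t$, i.e.\ $p(T,t)\ge\rho-t$. Substituting into the displayed identity gives $\mu(J)-r\ge 3t-\rho+(\rho-t)=2t$, which is $\ge 4$ whenever $t\ge 2$. This proves part (2).

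For part (1) I would show that when $t=1$ one has $p(T,1)=\rho$, equivalently that the first column of $Q_1\otimes_R\kk$ vanishes, equivalently that $c_{1,1,1},c_{1,1,2},c_{1,1,3}\in\m$. This holds because those constants are defined by $\sum_{l=1}^3 c_{1,1,l}z_l=T_{1,1}=0$ (the diagonal of $T$ is zero), and since $z_1,z_2,z_3$ is a regular sequence the module of syzygies of $(z_1\ z_2\ z_3)$ is generated by the Koszul relations, whose entries lie in $\m$; hence $(c_{1,1,l})_l\in\m R^3$. A zero column is not a pivot column, so all $\rho$ pivots lie among the last $m-1$ columns and $p(T,1)=\rho$, whence $\mu(J)-r=3-\rho+\rho=3$, i.e.\ $r=\mu(J)-3$.

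I do not expect a genuine obstacle: the content is entirely packaged in \Cref{thm:TrimClass}, and the only points needing care are the bookkeeping that turns the format and the value of $r$ into the pivot-column inequality $p(T,t)\ge\rho-t$, and the vanishing mod $\m$ of the ``diagonal'' entry of $q_1^1$ used in part (1) (which is exactly what makes $t=1$ special).
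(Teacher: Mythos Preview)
Your argument is correct and follows the same route as the paper: both proofs invoke \Cref{thm:TrimClass} to get $\mu(J)=m+2t-\rho$ and $r=m-t-p(T,t)$, use the elementary bound $\rho-p(T,t)\le t$ for part~(2), and for part~(1) use that the first column of $Q_1\otimes_R\kk$ vanishes (the paper simply says ``since $T_{1,1}=0$, it follows by definition that $\rank(Q_1\otimes_R\kk)=p(T,t)$''). Your Koszul-syzygy justification of that vanishing is a bit more careful than the paper's one-line remark---it shows $c_{1,1,l}\in\m$ for \emph{any} choice of the $c$'s, not only the natural choice $c_{1,1,l}=0$---but this is a minor elaboration, not a different approach.
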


\begin{proof}
By \Cref{thm:TrimClass}, we have $\mu(J)=m+2t-\rank(Q_1\otimes_R\kk)$ and $r=m-t-p(T,t)$. \begin{enumerate}
\item If $t=1$, since $T_{1,1}=0$, it follows by definition that $\rank(Q_1\otimes_R\kk)=p(T,t)$. Therefore,
\[
r=m-1-p(T,1)=(m+2-p(T,1))-3=\mu(J)-3.
\]
\item By definition it follows that
\[
\rank(Q_1\otimes_R\kk)-p(T,t)\leq t.
\]
If $t\geq2$, then $t\leq 3t-4$, therefore
\[
\rank(Q_1\otimes_R\kk)-p(T,t)\leq 3t-4,
\]
which is equivalent to
\[
m-t-p(T,t)\leq m+2t-\rank(Q_1\otimes_R\kk)-4;
\]
hence $r\leq \mu(J)-4$.\qedhere
\end{enumerate}
\end{proof}

\begin{remark}\label{rem:perm}
We point out that in \Cref{cor:conj} one does not need to necessarily trim the first $t$ generators of $I$, since one can always reduce to this case by conjugating the matrix $T$ by a permutation matrix. In particular, if one wishes to trim generators $i_1,i_2,\dots,i_t$, one conjugates $T$ by the matrix corresponding to the permutation
$\begin{psmallmatrix}
i_1&i_2&\cdots&i_t\\
1&2&\cdots&t
\end{psmallmatrix}$.
Indeed, it is an easy linear algebra exercise to show that if $E$ is an elementary matrix corresponding to a swap operation, then the sequence of principal minors of the matrix $EAE$ is obtained by applying the same swap operation to the sequence of principal minors of $A$. Therefore, if $A$ is skew-symmetric then the sequence of pfaffians of $EAE$ is obtained by applying the same swap operation to the sequence of sub-maximal pfaffians of $A$, up to a sign. Now one uses that a permutation matrix is a product of elementary matrices corresponding to swap operations. 

If one wishes to understand the format and class of an ideal obtained by trimming \emph{any} $t$ pfaffian generators of a Gorenstein ideal of grade 3, then by the observation above one can reduce to the case of trimming the first $t$ generators and apply \Cref{thm:TrimClass}.
\end{remark}

\section{Realizability}

In this section we address the realizability question, i.e. for what integers $\ell,n,r$ can one find an ideal of format $(1,\ell,\ell+n-1,n)$ and of class $\mathbf{G}(r)$? If the ideal is obtained from a Gorenstein ideal of grade 3 that is not a complete intersection by trimming $t$ pfaffian generators, then $n=t+1$. Moreover, it follows from \Cref{thm:TrimClass} and from the observation that 
\[
0\leq\mathrm{rank}(Q_1\otimes_R\kk)-p(T,t)\leq t,
\]
that $2t\leq \ell-r\leq 3t$. In \cite[Corollary 5.9]{DGTrimmed}, the realizability of ideals with $\ell-r\in\{3t-3,3t-2,3t\}$ was already shown. In this section we show that $\ell-r=3t-1$ is not possible, i.e. ideals with $\ell-r=3(n-1)-1=3n-4$ do not arise via trimming the pfaffian generators of a Gorenstein ideal, and construct a family of examples realizing $\ell-r=2t$.

\def\rddots#1{\cdot^{\cdot^{\cdot^{#1}}}}

\begin{proposition}\label{prop:Gap}
Let $J$ be an ideal obtained by trimming $t$ pfaffian generators of a Gorenstein ideal of grade 3 that is not a complete intersection. If $J$ is minimally generated by $\ell$ elements and is of class $\mathbf{G}(r)$, then $\ell-r\neq 3t-1$.
\end{proposition}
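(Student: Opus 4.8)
The plan is to argue by contradiction via a careful bookkeeping of the two integers that control the format and the class. By \Cref{thm:TrimClass} we have $\ell = \mu(J) = m + 2t - \rank(Q_1\otimes_R\kk)$ and $r = m - t - p(T,t)$, where $m$ is the (odd) size of the defining skew-symmetric matrix $T$. Subtracting, the hypothesis $\ell - r = 3t-1$ is equivalent to the single numerical identity
\[
\rank(Q_1\otimes_R\kk) - p(T,t) = 3t - 4.
\]
Since $0 \le \rank(Q_1\otimes_R\kk) - p(T,t) \le t$ always holds (the left block $q_1^k\otimes\kk$ contributes at most $3$ to the rank per trimmed generator, but the relevant inequality from \Cref{notation} and the remarks in Section 6 gives the bound $t$), the identity forces $3t - 4 \le t$, i.e. $t \le 2$. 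So the only cases to rule out are $t = 1$ and $t = 2$.

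First I would dispatch $t = 1$. Here $3t - 4 = -1 < 0$, contradicting $\rank(Q_1\otimes_R\kk) - p(T,1) \ge 0$; indeed, as noted in the proof of \Cref{cor:conj}, when $t = 1$ one has $T_{1,1} = 0$ so $\rank(Q_1\otimes_R\kk) = p(T,1)$ exactly, and $3\cdot1 - 4 = -1 \ne 0$. This leaves $t = 2$, where the identity reads $\rank(Q_1\otimes_R\kk) - p(T,2) = 2$. Since $p(T,2)$ counts pivot columns among the last $m-2$ columns and $\rank(Q_1\otimes_R\kk)$ is the total number of pivot columns, this says that \emph{both} of the first two columns of $Q_1\otimes_R\kk$ are pivot columns, i.e. the $6\times 2$ matrix formed by the first two columns of $Q_1\otimes_R\kk$ has rank $2$. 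The plan is to show this configuration is impossible when $J$ is of class $\mathbf{G}$. The key structural input is skew-symmetry of $T$: the entry $T_{k,i}$ expanded as $\sum_l c_{i,k,l}z_l$ satisfies $c_{i,k,l} = -c_{k,i,l}$, and in particular $c_{1,1,l} = c_{2,2,l} = 0$, while the first column of $q_1^1\otimes\kk$ is $(\ov{c_{1,1,l}})_l = 0$ and the first column of $q_1^2\otimes\kk$ is $(\ov{c_{1,2,l}})_l$; symmetrically the second column of $q_1^2\otimes\kk$ vanishes and the second column of $q_1^1\otimes\kk$ is $(\ov{c_{2,1,l}})_l = -(\ov{c_{1,2,l}})_l$. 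Hence the two columns of interest, stacked over the two blocks $k=1,2$, are $(0,\ldots,0,\ov{c_{1,2,\bullet}})$ and $(-\ov{c_{1,2,\bullet}},0,\ldots,0)$ up to sign, so their rank is exactly $2\cdot\mathbf{1}[\,\ov{c_{1,2,\bullet}}\neq 0\,]$. Thus $\rank(Q_1\otimes_R\kk) - p(T,2) = 2$ forces the vector $(\ov{c_{1,2,1}},\ov{c_{1,2,2}},\ov{c_{1,2,3}})$ to be nonzero, i.e. $T_{2,1}\notin\m^2$.

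The main obstacle, and the crux of the argument, is then to show that $T_{2,1}\notin\m^2$ is incompatible with $J$ being of class $\mathbf{G}$ once $m\ge 5$. For $m = 5$ this is where \Cref{prop:5G} (equivalently condition (1) of \Cref{lem:Cond1&2}) does the work: taking $k\in\{1,2\}$ and the appropriate $i,j$ with $3\le i,j\le 5$ so that $\{h,r\} = [5]\setminus\{k,i,j\}$, the class-$\mathbf{G}$ condition forces all $2\times2$ minors of $\begin{psmallmatrix}\ov{c_{h,k,1}}&\ov{c_{h,k,2}}&\ov{c_{h,k,3}}\\ \ov{c_{r,k,1}}&\ov{c_{r,k,2}}&\ov{c_{r,k,3}}\end{psmallmatrix}$ to vanish for all such choices; combining these collinearity conditions across the available index sets, together with the skew-symmetry relations tying $c_{i,j,l}$ to $c_{j,i,l}$, pins down enough of the reduced entries of $Q_1\otimes_R\kk$ that its rank drops and one cannot have both of the first two columns be pivots while $\ov{c_{1,2,\bullet}}\ne 0$ — a direct linear-algebra contradiction. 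For $m\ge 7$, part (2) of \Cref{thm:TrimClass} says $J$ is automatically of class $\mathbf{G}$, so no condition is imposed there; in that regime I expect $\ell - r = 3t-1$ with $t=2$ to be excluded instead by a refined version of the rank bound — namely that when $t=2$ one in fact has $\rank(Q_1\otimes_R\kk) - p(T,2)\le 1$, because the two trimmed rows/columns of the skew-symmetric $T$ cannot simultaneously produce two independent pivots among the remaining columns without also making one of the first two columns dependent on them. Verifying this last inequality carefully — reconciling the $m\ge 7$ case with the $m=5$ analysis into one uniform statement — is the step I anticipate being the real obstacle, and I would handle it by the same skew-symmetry bookkeeping on the columns of $Q_1\otimes_R\kk$ sketched above, made precise.
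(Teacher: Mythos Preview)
Your very first substantive step contains an arithmetic error that derails the whole argument. From \Cref{thm:TrimClass} one has $\ell = m + 2t - \rank(Q_1\otimes_R\kk)$ and $r = m - t - p(T,t)$, so
\[
\ell - r \;=\; 3t - \bigl(\rank(Q_1\otimes_R\kk) - p(T,t)\bigr).
\]
Thus $\ell - r = 3t - 1$ is equivalent to $\rank(Q_1\otimes_R\kk) - p(T,t) = 1$, not $3t-4$. (The quantity $3t-4$ is what appears in the proof of \Cref{cor:conj}(2), where the hypothesis is $\ell - r = 4$; you have transplanted it here by mistake.) Consequently your reduction to $t\le 2$ is spurious, and the entire case analysis that follows---including the $m=5$ versus $m\ge 7$ bifurcation and the appeal to \Cref{prop:5G}---is aimed at the wrong target.

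With the correct identity, the argument is short and uniform in $t$: one must rule out the possibility that $Q_1\otimes_R\kk$ has \emph{exactly one} pivot column among its first $t$ columns. If column $i$ (with $1\le i\le t$) is that unique pivot, then columns $1,\ldots,i-1$ are all zero. The first nonzero entry of column $i$ lies in some block $j$; since the diagonal block is zero ($\ov{c_{i,i,l}}=0$), we have $j\neq i$ and $1\le j\le t$. Skew-symmetry gives $\ov{c_{j,i,l}} = -\ov{c_{i,j,l}}$, so column $j$ has a nonzero entry in block $i$. But every column to the left of $j$ is either zero or a multiple of column $i$, and column $i$ vanishes on block $i$ (and indeed on all blocks above $j$); hence column $j$ cannot lie in their span and is itself a pivot, contradicting uniqueness. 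This is the paper's proof. Your instinct to exploit the relation $\ov{c_{j,i,l}} = -\ov{c_{i,j,l}}$ was exactly right; you just need to apply it to the correct numerical situation.
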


\begin{proof}
Without loss of generality, we can assume that $J$ is obtained by trimming the first $t$ pfaffian generators of a Gorenstein ideal of grade 3, see \Cref{rem:perm}. Seeking a contradiction, we assume that $\ell-r=3t-1$. By \Cref{thm:TrimClass}, it follows that $$\mathrm{rank}(Q_1\otimes_R\kk)-p(T,t)=1,$$ i.e. the matrix $Q_1\otimes_R\kk$ has exactly one pivot column among the first $t$ columns. We argue that this is not possible. Recall that $Q_1$ is a block matrix with $t$ blocks, see \Cref{notation}. Let $i$ be the pivot column among the first $t$ columns. Let $j$ be the the block containing the first nonzero entry in column $i$. By the skew-symmetry of the matrix presenting the Gorenstein ideal, it follows that $i\neq j$ and that there is a nonzero entry in column $j$ and block $i$ with zeroes to the left of it. This forces column $j$ to be a pivot column, which is a contradiction.
\end{proof}

\begin{remark}
A computer search made with the \verb+TorAlgebra+  package of Macaulay2 showed that ideals in three dimensional polynomial rings of format $(1,\ell,\ell+n-1,n)$ and of class $\mathbf{G}(r)$ with $\ell-r=3n-4$ are abundant. For example one can take the ideal $J=(xz^2, xy^2-z^3, x^2y, x^3, z^4, y^3z, y^5)$ in $\mathbb{Q}[x,y,z]$ which has format $(1,7,9,3)$ and class $\mathbf{G}(2)$. In particular, it follows by \Cref{prop:Gap} that these ideals cannot arise by trimming the pfaffian generators of a Gorenstein ideal.
\end{remark}

\begin{chunk}\label{chk:Ex1}
Set $R=\kk[[x,y,z]]$ and let $s$ be a positive integer. We define the $s\times s$ matrix $U_s$ as the matrix
\[
U_s\colonequals\begin{pmatrix}
&&&x&z\\
&&\iddots&z&y^2\\
&\iddots&\iddots&\iddots\\
x&z&\iddots\\
z&y^2
\end{pmatrix},
\]
where the empty entries are 0. Let $V_s$ be the $(2s+1)\times (2s+1)$ matrix given by
\[
V_s\colonequals\left(\arraycolsep=1.4pt\def\arraystretch{1.5}
\begin{array}{cc|cc|ccccc}
O && O_x && && U_s &&\\
\hline
-(O_x)^T && 0 && && {}^{y^2}O &&\\
\hline
-U_s && -(^{y^2}O)^T && && O &&
\end{array}\right)
\]
where $O_x$ is an appropriately sized column matrix with an $x$ at the bottom and zeroes elsewhere and ${}^{y^2}O$ is an appropriately sized row matrix with $y^2$ in the leftmost entry and zeroes elsewhere. Let $T_s$ be the $(4s+3) \times (4s+3)$ matrix given by 
\[
T_s\colonequals\left(\arraycolsep=1.4pt\def\arraystretch{1.5}
\begin{array}{cc|cc|ccccc}
V_s && O_x && && U_{2s+1} &&\\
\hline
-(O_x)^T && 0 && && {}^{y^2}O &&\\
\hline
-U_{2s+1} && -(^{y^2}O)^T && && O &&
\end{array}\right).
\]
Let $I_s$ be the ideal $((-1)^{i+1}\pf{i}{T_s}))_{i=1,\ldots,4s+3}$, and let $J_s$ be the ideal obtained by trimming the first $2s+1$ generators of $I_s$.
\end{chunk}

\begin{lemma}\label{lem:OddEx}
The ideals $I_s$ are Gorenstein ideals of grade 3.
\end{lemma}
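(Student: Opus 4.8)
To show that each $I_s$ is a Gorenstein ideal of grade $3$, I would invoke the Buchsbaum--Eisenbud structure theorem in the form used throughout the paper (see \Cref{ch:GorRes} and \cite[Theorem 2.1]{BuchEisen}): a grade $3$ perfect ideal generated by the sub-maximal pfaffians of a skew-symmetric matrix of odd size is Gorenstein provided the grade is actually $3$ (equivalently, provided the pfaffian ideal has the expected height). Since $T_s$ is by construction a skew-symmetric matrix of odd size $4s+3$ with entries in $\m$, the only thing left to verify is that $\operatorname{grade} I_s = 3$, i.e. that $I_s$ has height $3$ in the three-dimensional regular local ring $R = \kk[[x,y,z]]$; equivalently $R/I_s$ has Krull dimension $0$, i.e. $I_s$ is $\m$-primary.

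First I would record the shape of $T_s$ explicitly. Unwinding the nested block definition ($U_s \subset V_s \subset T_s$), the matrix $T_s$ is an ``anti-triangular'' band matrix whose only nonzero entries, read along the relevant anti-diagonals, are copies of $x$, $z$, and $y^2$. The key structural feature is that a sub-maximal pfaffian $\pf{i}{T_s}$ is a pfaffian of an \emph{even-sized} anti-triangular matrix obtained by deleting row/column $i$, and for such an anti-triangular matrix the pfaffian is (up to sign) the product of the entries on the main anti-diagonal of the deleted matrix. So each generator of $I_s$ is, up to sign, a monomial in $x$, $y$, $z$ (a product of some $x$'s, $z$'s, and $y^2$'s). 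The proof reduces to checking that the monomial ideal obtained this way contains a power of each variable.

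The cleanest route is to exhibit, among the generators, a pure power of $x$, a pure power of $y$, and a pure power of $z$ — or more modestly, to show the radical is $\m$. Deleting the first row/column of $T_s$ isolates the top-left anti-diagonal structure in such a way that the main anti-diagonal of the remaining matrix consists entirely of $x$'s (coming from the $O_x$ columns and the top rows of the $U$ blocks), producing a generator that is a power of $x$; symmetrically, deleting the last row/column produces a power of $z$ (from the bottom of the $U$ blocks and the $^{y^2}O$ rows the other anti-diagonal picks up $z$'s), and deleting a suitable middle index produces a generator divisible only by $y$. I would make these three choices of $i$ precise by tracking the band pattern, compute the corresponding anti-diagonal products, and conclude $x^a, y^b, z^c \in I_s$ for suitable exponents, whence $\sqrt{I_s} = \m$ and $\operatorname{height} I_s = 3$. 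Then $I_s$ is a grade $3$ perfect ideal defined by the sub-maximal pfaffians of an odd skew-symmetric matrix, so it is Gorenstein by \cite{BuchEisen}.

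**Main obstacle.** The bookkeeping in the second step is the delicate part: one must verify that deleting the chosen rows/columns really does leave an anti-triangular matrix (so that its pfaffian is the anti-diagonal product and, crucially, is \emph{nonzero}), and one must identify which entries land on that anti-diagonal. The nested-block construction makes the index arithmetic fiddly — the $V_s$ block sits inside $T_s$ shifted by the sizes of the $O_x$ and $U_{2s+1}$ blocks — so the hardest part is choosing the three deletion indices correctly and confirming the resulting monomials are genuinely powers of $x$, of $y$, and of $z$ (rather than accidentally sharing a variable or vanishing). Once the band structure is pinned down this is a finite, if intricate, computation; alternatively, one could sidestep explicit pfaffians by a Fitting-ideal/height argument showing $T_s$ has the expected rank after inverting any single variable, but the anti-diagonal-product observation is the most transparent.
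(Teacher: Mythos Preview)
Your overall strategy---invoking \cite[Theorem 2.1]{BuchEisen} and then exhibiting three pfaffian generators whose radical is $\m$---is exactly what the paper does. However, the structural claim driving your computation is false: the matrix $T_s$ is \emph{not} anti-triangular in the sense of having a single nonzero anti-diagonal. The blocks $U_s$ already carry three nonzero anti-diagonals (one of $x$'s, one of $z$'s, one of $y^2$'s), and this persists in $V_s$ and $T_s$. Consequently, deleting a single row/column does not in general leave an anti-triangular matrix, and the sub-maximal pfaffians are \emph{not} all monomials. For instance, $\pf{2s+1}{T_s}=\pm\det U_{2s+1}$ is a genuine polynomial with several terms, and $\pf{4s+3}{T_s}$ likewise has $x^{2s+1}$ only as a summand, not as its sole term. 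Your proposed anti-diagonal-product formula therefore breaks down precisely where you need it.

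The paper's argument is close in spirit but more careful: it selects the indices $1$, $2s+1$, and $4s+3$, and argues separately for each. Deleting row/column $1$ genuinely does leave a lower anti-triangular matrix with $y^2$'s on the anti-diagonal, so $\pf{1}{T_s}=\pm y^{4s+2}$ (note: a $y$-power, not an $x$-power as you guessed). For the other two indices the paper does not claim the pfaffian is a monomial; instead it shows via a block-determinant identity and a cofactor expansion that $\pf{2s+1}{T_s}$ contains $z^{2s+1}$ as a summand and $\pf{4s+3}{T_s}$ contains $x^{2s+1}$ as a summand. These three elements then form a regular sequence (their leading pure-power terms guarantee the radical is $\m$), which is what is needed. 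Your sketch can be repaired along these lines, but the ``every pfaffian is a monomial'' shortcut must be abandoned.
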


\begin{proof}
By \cite[Theorem 2.1]{BuchEisen} it suffices to show that these ideals contain a regular sequence of length 3. We claim that $\pf{1}{T_s}$, $\pf{2s+1}{T_s}$ and $\pf{4s+3}{T_s}$ form a regular sequence. 

The matrix obtained by removing the $1^{\mathrm{st}}$ row and $1^{\mathrm{st}}$ column from $T_s$ has determinant $y^{8s+4}$, it follows that $\pf{1}{T_s}=\pm y^{4s+2}$. 

By \cite[Theorem 2]{BlockDet}, the determinant of the matrix obtained by removing the $2s+1^{\mathrm{st}}$ row and $2s+1^{\mathrm{st}}$ column from $T_s$ is $(\det U_{2s+1})^2$, and therefore $\pf{2s+1}{T_s}=\pm\det U_{2s+1}$, which contains $z^{2s+1}$ as a summand by a proof similar to \cite[(3.2) Lemma]{trimming}. 

By taking the cofactor expansion of the determinant of the matrix obtained from $T_s$ by removing the $4s+3^\mathrm{rd}$ row and column, it is clear that this determinant has $\pm x^{4s+2}$ as a summand. It follows that $\pf{4s+1}{T_s}$ has $\pm x^{2s+1}$ as a summand.

\end{proof}

\begin{proposition}\label{prop:OddEx}
The ideal $J_s$ has format $(1,6s+2,8s+3,2s+2)$ and is of class $\mathbf{G}(2s)$.
\end{proposition}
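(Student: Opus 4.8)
The plan is to apply \Cref{thm:TrimClass} with $m=4s+3$ and $t=2s+1$. By \Cref{lem:OddEx} the ideal $I_s$ is Gorenstein of grade $3$, and $m=4s+3\geq 7$ is odd, so \Cref{thm:TrimClass}(2) applies: $J_s$ is of class $\mathbf{G}$, its format is $\big(1,\,m+2t-\rank(Q_1\otimes_R\kk),\,m+3t-\rank(Q_1\otimes_R\kk),\,1+t\big)$, and if $J_s$ is of class $\mathbf{G}(r)$ then $r=m-t-p(T_s,t)$. Substituting $m=4s+3$ and $t=2s+1$, the proposition reduces to the two numerical facts
\[
\rank(Q_1\otimes_R\kk)=2s+3\qquad\text{and}\qquad p(T_s,2s+1)=2,
\]
since these give the format $(1,6s+2,8s+3,2s+2)$ and $r=(4s+3)-(2s+1)-2=2s$.

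To compute the rank, recall from \Cref{notation} that $Q_1\otimes_R\kk$ is the $3(2s+1)\times(4s+3)$ matrix whose $k$-th three-row block records, column by column, the $\kk$-coefficients (with respect to $x,y,z$) of the linear part of $T_{k,1},\dots,T_{k,4s+3}$, for $1\leq k\leq 2s+1$. Every entry of $T_s$ lies in $\{0,\pm x,\pm z,\pm y^2\}$ and $y^2\in\m^2$, so the rows recording $y$-coefficients vanish and the matrix is governed entirely by the positions of the $\pm x$ and $\pm z$ entries in the first $2s+1$ rows of $T_s$. First I would read these positions off from the block decomposition $T_s=\left(\begin{smallmatrix}V_s&O_x&U_{2s+1}\\ \ast&0&\ast\\ \ast&\ast&O\end{smallmatrix}\right)$, the identical decomposition of $V_s$, and the anti-banded shape of $U_n$, namely that $(U_n)_{a,b}$ equals $x$, $z$, $y^2$ according as $a+b$ equals $n$, $n+1$, $n+2$ and is $0$ otherwise. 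A direct computation then shows that the kernel of $Q_1\otimes_R\kk$, viewed in $\kk^{4s+3}$, consists of those $v$ with $v_{s+1}=v_{2s+2}=v_{3s+3}=0$, with $v_1,\dots,v_s$ and $v_{s+2},\dots,v_{2s+1}$ arbitrary, and with $v_{2s+2+j}=v_j$ and $v_{3s+3+j}=-v_{s+1+j}$ for $1\leq j\leq s$. Hence $\dim\ker(Q_1\otimes_R\kk)=2s$ and $\rank(Q_1\otimes_R\kk)=(4s+3)-2s=2s+3$.

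For the pivot count, it follows from the definition of $p(T_s,t)$ in \Cref{notation} that $p(T_s,2s+1)$ equals $\rank(Q_1\otimes_R\kk)$ minus the rank of the submatrix formed by the first $2s+1$ columns. But in the kernel description above every free parameter sits among the coordinates $1,\dots,2s+1$, while the coordinates $2s+2,\dots,4s+3$ are either $0$ or, up to sign, copies of those parameters; hence a kernel vector supported on the first $2s+1$ coordinates is forced to be $0$. So those $2s+1$ columns are linearly independent, giving $p(T_s,2s+1)=(2s+3)-(2s+1)=2$. Substituting the two numerical facts into \Cref{thm:TrimClass} finishes the proof.

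The part I expect to be the main obstacle is the ``direct computation'' of $\ker(Q_1\otimes_R\kk)$: one must carefully track the signs and positions of the $\pm x$ and $\pm z$ entries through the triply nested block-and-anti-band structure of $T_s$ and then solve the resulting highly redundant sparse linear system. The work is mechanical, but the bookkeeping is delicate at the ``boundary'' columns $s,\,s+1,\,2s+1,\,2s+2,\,3s+2,\,3s+3,\,4s+3$, where the generic band pattern degenerates and, as the computation shows, exactly three coordinates are forced to vanish.
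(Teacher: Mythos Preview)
Your proposal is correct and follows essentially the same approach as the paper: reduce to \Cref{thm:TrimClass} and verify $\rank(Q_1\otimes_R\kk)=2s+3$ and $p(T_s,2s+1)=2$. The only difference is cosmetic---the paper identifies the $2s+3$ pivot columns directly (the $\pm x$'s in $V_s$ give $2s$ pivots among the first $2s+1$ columns, the $z$ in row~$1$ gives one more, and the $z$ in row $s+1$ and the $x$ in row $2s+1$ give the two pivots among the last $2s+2$ columns), whereas you compute $\ker(Q_1\otimes_R\kk)$ explicitly and read off the same numbers; your kernel description and the deduction that the first $2s+1$ columns are independent are both correct (I checked $s=1$ against the explicit matrix and it matches).
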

\begin{proof}
An elementary analysis of the matrix $Q_1\otimes_R\kk$ shows that the $x$'s and $-x$'s in the block $V_s$ correspond to $2s$ pivot columns among the first $2s+1$ columns of $Q_1\otimes_R\kk$. The only other pivot among the first $2s+1$ columns corresponds to the $z$ in the first row of $V_s$. The remaining pivots of $Q_1\otimes_R\kk$ correspond to the $z$ in the $s+1^{\mathrm{st}}$ row of $T_s$ and to the $x$ in the $2s+1^{\mathrm{st}}$ row of $T_s$. Therefore $\mathrm{rank}(Q_1\otimes_R\kk)=2s+3$ and $p(T_s,2s+1)=2$. The result now follows from \Cref{thm:TrimClass}.
\end{proof}

\begin{chunk}
Set $R=\kk[[x,y,z]]$ and let $s$ be an integer larger than or equal to 2. The $s\times s$ matrix $U_s$ is defined as in \Cref{chk:Ex1}.
Let $V_s'$ be the $(2s)\times (2s)$ matrix given by
\[
V_s'\colonequals\left(\arraycolsep=1.4pt\def\arraystretch{1.5}
\begin{array}{cc|cccc}
O &&&& U_s &\\
\hline
-U_s &&&& O &
\end{array}\right).
\]
Let $T_s'$ be the $(4s+1) \times (4s+1)$ matrix given by 
\[
T_s'\colonequals\left(\arraycolsep=1.4pt\def\arraystretch{1.5}
\begin{array}{cc|cc|ccccc}
V_s' && O_x && && U_{2s} &&\\
\hline
-(O_x)^T && 0 && && {}^{y^2}O &&\\
\hline
-U_{2s} && -(^{y^2}O)^T && && O &&
\end{array}\right).
\]
where $O_x$ is an appropriately sized column matrix with an $x$ at the bottom and zeroes elsewhere and ${}^{y^2}O$ is an appropriately sized row matrix with $y^2$ in the leftmost entry and zeroes elsewhere.
Let $I_s'$ be the ideal $((-1)^{i+1}\pf{i}{T_s'}))_{i=1,\ldots,4s+1}$, and let $J_s'$ be the ideal obtained by trimming the first $2s$ generators of $I_s'$.
\end{chunk}

The proofs of the next two results are similar to the proofs of \Cref{lem:OddEx} and \Cref{prop:OddEx} and are therefore omitted.

\begin{lemma}
The ideals $I_s'$ are Gorenstein ideals of grade 3.
\end{lemma}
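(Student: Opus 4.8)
The plan is to mimic the proof of \Cref{lem:OddEx}, showing that $I_s'$ is Gorenstein of grade 3 by exhibiting an explicit regular sequence of length 3 among its generators; by \cite[Theorem 2.1]{BuchEisen} this suffices, since $I_s'$ is already generated by the sub-maximal pfaffians of the skew-symmetric matrix $T_s'$. The natural candidates, paralleling the odd case, are the pfaffians $\pf{1}{T_s'}$, $\pf{2s}{T_s'}$, and $\pf{4s+1}{T_s'}$ (the last being the one indexed by the ``$y^2$-corner'' of the outer $U_{2s}$ block). To prove these form a regular sequence in the regular local ring $R=\kk[[x,y,z]]$, it is enough to show each is a nonzerodivisor modulo the ideal generated by the previous ones, and for this it suffices to identify, for each pfaffian, a monomial summand that is a ``pure power'' in a distinct variable.

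First I would compute $\pf{1}{T_s'}$: deleting the first row and column of $T_s'$ leaves a block matrix whose determinant, by the structure of the remaining blocks, is a power of $y$ (the only entry in the ${}^{y^2}O$ and ${}^{y^2}$-row pieces, together with the $y^2$ and $z$ entries of the $U$-blocks arranged so that the $y^2$'s along the anti-diagonal dominate); one reads off $\pf{1}{T_s'}=\pm y^{2s}$ up to adjusting the exponent by tracking the sizes ($U_{2s}$ contributes a factor of $y^{4s}$ from its own determinant-squared, etc.). Next, for $\pf{2s}{T_s'}$, I would invoke \cite[Theorem 2]{BlockDet} exactly as in the proof of \Cref{lem:OddEx}: deleting the row and column indexed $2s$ decouples $T_s'$ into an anti-block-diagonal form whose determinant is $(\det U_{2s})^2$, so $\pf{2s}{T_s'}=\pm\det U_{2s}$, and by the argument of \cite[(3.2) Lemma]{trimming} this determinant has $z^{2s}$ as a summand. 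Finally, for $\pf{4s+1}{T_s'}$, deleting the last row and column and taking a cofactor expansion along the column $O_x$ produces a term with $x^{4s}$ as a summand (again from a $U$-block determinant squared plus the lone $x$), whence $\pf{4s+1}{T_s'}$ has $\pm x^{2s}$ as a summand.

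With these three computations in hand, the regular sequence claim follows: $y^{2s}$ is a nonzerodivisor on $R$; passing to $R/(y^{2s})$, the element $\pf{2s}{T_s'}$ has a summand $z^{2s}$ involving no $y$, so it is not a zerodivisor in $R/(y^{2s})$ (a power series with a $z$-pure-power term cannot be annihilated by a nonzero element in $R/(y^{2s})\cong\kk[[x,z]][y]/(y^{2s})$ after the standard localization argument, or more simply because $y^{2s},z^{2s}$ extend to a system of parameters); and similarly $\pf{4s+1}{T_s'}$ has a summand $x^{2s}$, so $y^{2s}, \pf{2s}{T_s'}, \pf{4s+1}{T_s'}$ is a system of parameters on the $3$-dimensional regular local ring $R$, hence a regular sequence. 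Therefore $\operatorname{grade} I_s'\geq 3$, and since $I_s'$ is a proper ideal generated by sub-maximal pfaffians of an odd-sized skew-symmetric matrix, \cite[Theorem 2.1]{BuchEisen} gives that $I_s'$ is Gorenstein of grade exactly $3$.

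The main obstacle I anticipate is purely bookkeeping: getting the exponents of the pure powers right requires carefully tracking the sizes of the nested blocks $U_s\subseteq V_s'\subseteq T_s'$ and the signs and cofactor positions, and confirming that the ``obvious'' monomial summands genuinely survive (i.e.\ are not cancelled by other terms in the pfaffian expansion). The cancellation point is the one place where I would want to be careful — but it is handled exactly as in \cite[(3.2) Lemma]{trimming} and in the proof of \Cref{lem:OddEx}, since the relevant monomials ($z^{2s}$ in $\det U_{2s}$, $x^{2s}$ in the cofactor, $y^{2s}$ from the $y$-only block) arise from a unique product of matrix entries along a forced path and therefore appear with coefficient $\pm 1$. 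This is why the authors state the proof is omitted as being similar to that of \Cref{lem:OddEx}.
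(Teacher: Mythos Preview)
Your proposal is correct and follows the same approach the paper intends: the paper explicitly omits the proof as being ``similar to the proofs of \Cref{lem:OddEx} and \Cref{prop:OddEx}'', and your plan---invoke \cite[Theorem 2.1]{BuchEisen} and exhibit three pfaffians of $T_s'$ whose pure-power monomial summands in $y$, $z$, $x$ respectively force them to be a system of parameters---is precisely that argument. Two small bookkeeping corrections, both of the type you already flagged: the exponent in $\pf{1}{T_s'}$ should be $y^{4s}$ (the submatrix has size $4s$, giving determinant $y^{8s}$), and the index giving the clean block form with determinant $(\det U_{2s})^2$ via \cite[Theorem 2]{BlockDet} is most naturally $2s+1$ (removing the middle row/column decouples $T_s'$ into $V_s'$, $U_{2s}$, $-U_{2s}$, and $O$ blocks), though the paper's own choice in \Cref{lem:OddEx} suggests $2s$ by direct analogy---either way the key summand $z^{2s}$ in $\det U_{2s}$ survives.
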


\begin{proposition}\label{prop:EvenEx}
The ideal $J_s'$ has format $(1,6s-1,8s-1,2s+1)$ and is of class $\mathbf{G}(2s-1)$.
\end{proposition}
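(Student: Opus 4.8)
The plan is to deduce everything from \Cref{thm:TrimClass} together with an explicit inspection of $Q_1\otimes_R\kk$, following the template of the proof of \Cref{prop:OddEx}. By the preceding lemma, $I_s'$ is a Gorenstein ideal of grade $3$ presented by the skew-symmetric matrix $T_s'$ of odd size $m=4s+1$ with entries in $\m$; since $s\ge 2$ we have $m\ge 9>7$, so \Cref{thm:TrimClass}(2) applies with $t=2s$. It yields that $J_s'$ is of class $\mathbf{G}$, with format
\[
\bigl(1,\ m+2t-\rank(Q_1\otimes_R\kk),\ m+3t-\rank(Q_1\otimes_R\kk),\ 1+t\bigr)=\bigl(1,\ 8s+1-\rank(Q_1\otimes_R\kk),\ 10s+1-\rank(Q_1\otimes_R\kk),\ 2s+1\bigr),
\]
and, if $J_s'$ is of class $\mathbf{G}(r)$, that $r=m-t-p(T_s',2s)=2s+1-p(T_s',2s)$. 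Thus the whole statement reduces to the two numerical identities $\rank(Q_1\otimes_R\kk)=2s+2$ and $p(T_s',2s)=2$.

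To establish these I would write out the columns of $Q_1\otimes_R\kk$ explicitly. Column $i$ is read off column $i$ of $T_s'$ restricted to its first $2s$ rows: for $i\le 2s$ this is column $i$ of the block $V_s'$; for $i=2s+1$ it is the column $O_x$; and for $2s+1<i\le 4s+1$ it is column $i-2s-1$ of $U_{2s}$. Modulo $\m$ only the $x$- and $z$-entries survive (the $y^2$-entries have no linear part), so each column of $Q_1\otimes_R\kk$ is, up to sign, either $0$ or a sum of one or two coordinate vectors $a_b,c_b$, where $a_b$ (resp.\ $c_b$) picks out the row of the $b$-th block $q_1^b\otimes_R\kk$ recording the $x$-coefficient (resp.\ $z$-coefficient). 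Using that in $U_s$ and $U_{2s}$ the letter $x$ lies on the antidiagonal $\{k+l=s\}$ (resp.\ $\{k+l=2s\}$), $z$ on the next antidiagonal, and $y^2$ on the one after, I would check that the nonzero columns of $Q_1\otimes_R\kk$ are, up to sign, exactly the $2s+2$ vectors
\[
a_{2s},\qquad c_1,\qquad c_{s+1},\qquad v_j:=a_{2s-j}+c_{2s+1-j}\quad(1\le j\le 2s-1),
\]
the three ``short'' vectors arising precisely when an antidiagonal entry of $U_s$ or $U_{2s}$ is pushed off the edge of the matrix; moreover every one of the last $2s$ columns coincides, up to sign, with one of the first $2s$ columns, with the single exception of column $3s+1$, which equals $v_s=a_s+c_{s+1}$.

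From this list the rank count is straightforward: $a_{2s}$ occurs in none of the $v_j$ nor in $c_1,c_{s+1}$; $c_1$ occurs in none of the others; the coordinates $a_1,\dots,a_{2s-1}$ separate the $v_j$ from each other; and then $c_{s+1}$ is forced to have coefficient $0$ in any vanishing relation, so the $2s+2$ vectors are $\kk$-linearly independent and $\rank(Q_1\otimes_R\kk)=2s+2$. For $p(T_s',2s)$ I would scan the columns left to right: each of columns $1,\dots,2s$ introduces a coordinate not present among the preceding columns (a fresh $a_b$ when $i\ne s,2s$, the fresh $c_{s+1}$ when $i=s$, the fresh $c_1$ when $i=2s$), so all of them are pivot columns; among columns $2s+1,\dots,4s+1$, column $2s+1$ is the first carrying $a_{2s}$ and column $3s+1$ is the first carrying $a_s$ (via $v_s$), while every remaining column repeats an earlier one up to sign. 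Hence exactly two of the last $2s+1$ columns are pivot columns, $p(T_s',2s)=2$, and therefore $r=2s-1$; combined with the format formula above this finishes the proof.

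The only genuinely delicate point is the middle step: reading the columns of $V_s'$, $O_x$, and $U_{2s}$ correctly and, in particular, tracking which antidiagonal entries fall outside the matrix — this is exactly what produces the short vectors $a_{2s}$, $c_1$, $c_{s+1}$ and the coincidences among the last $2s$ columns, and it is the same bookkeeping as in \Cref{prop:OddEx} (with $V_s$ replaced by $V_s'$), which accounts for the drop from $\mathbf{G}(2s)$ to $\mathbf{G}(2s-1)$ and the change of format from $(1,6s+2,8s+3,2s+2)$ to $(1,6s-1,8s-1,2s+1)$.
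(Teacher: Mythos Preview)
Your proposal is correct and follows exactly the approach the paper intends: the paper omits the proof, saying it is ``similar to the proofs of \Cref{lem:OddEx} and \Cref{prop:OddEx}'', and your argument carries out precisely that analogy---read off the first $2s$ rows of $T_s'$ to describe the columns of $Q_1\otimes_R\kk$, verify $\rank(Q_1\otimes_R\kk)=2s+2$ and $p(T_s',2s)=2$, and conclude via \Cref{thm:TrimClass}. One small expository slip: the short vector $a_{2s}$ comes from the column $O_x$ rather than from an antidiagonal of $U_s$ or $U_{2s}$ being pushed off the edge, but this does not affect the argument.
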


\section*{Acknowledgements}
We thank Lars Christensen for helpful conversations regarding the last section of this paper. We also thank the anonymous referee for their careful reading of the manuscript and
suggestions for improvement.

\appendix
\section{}
In this appendix we provide the full DG algebra structure on the resolution constructed in \Cref{thm:res}, complementing the products calculated in \Cref{thm:prod}.

Let $\alpha,\beta,l,s,\in\{1,2,3\}$ with $\alpha<\beta$, let $k$ be an integer between $1$ and $t$, and let $i,j$ be integers between $1$ and $m$. Before defining new constants, we recall the various constants that were defined on the main body of the paper. Let $T$ be a $m\times m$ skew-symmetric matrix with entries in the maximal ideal of $R$, then the constants $c_{i,j,l}$ are defined by the following equality 
\[
T_{j,i} = \sum_{l=1}^3 c_{i,j,l}z_l.
\]

The constants $c_{i,j,l}$ were used to define the following elements of $R$:

\begin{align*}
    d^k_{\alpha,\beta} &= \sum_{i=1}^m\sum_{r=1}^m \sigma_{i,k,r} c_{i,k,\beta} c_{r,k,\alpha} \pf{i,k,r}{T}, \\
    d^{k,i,j}_{\alpha,\beta} &= \sum_{r=1}^{m} \sigma_{i,j,r} \sum_{h=1}^m \sigma_{i,j,r,h,k} \pf{i,j,r,h,k}{T}c_{r,k,\beta}c_{h,k,\alpha},\\
    d_{\alpha,\beta}^{k,i,j,l} &=  z_ld_{\alpha,\beta}^{k,i,j},\quad\mathrm{for}\;k\neq i,\\
    d^{i,i,j,l}_{\alpha,\beta} &=  0\quad\mathrm{if}\;\{\alpha,\beta,l\}=\{1,2,3\},\\
    d^{i,i,j,\alpha}_{\alpha,\beta} &= \sum_{r=1}^m\sigma_{i,j,r}\pf{i,j,r}{T}c_{r,i,\beta},\\
    d^{i,i,j,\beta}_{\alpha,\beta} &= -\sum_{r=1}^m\sigma_{i,j,r}\pf{i,j,r}{T}c_{r,i,\alpha}.
\end{align*}

The new constants that we define in this appendix, which will be used in product $d$, are the following:

\begin{align*}
d_{\alpha,\beta}^{k,i,j,l,s}&\colonequals z_lz_sd_{\alpha,\beta}^{k,i,j},\quad\mathrm{for}\;k\neq i,j,\\
d_{\alpha,\beta}^{i,i,j,l,s}&\colonequals z_sd_{\alpha,\beta}^{i,i,j,l},\\
d_{\alpha,\beta}^{j,i,j,l,s}&\colonequals 0\quad\mathrm{if}\;\{\alpha,\beta,s\}=\{1,2,3\},\\
d_{\alpha,\beta}^{j,i,j,l,\alpha}&\colonequals z_l\sum_{r=1}^m\sigma_{i,j,r}\pf{i,j,r}{T}c_{r,j,\beta},\\
d_{\alpha,\beta}^{j,i,j,l,\beta}&\colonequals -z_l\sum_{r=1}^m\sigma_{i,j,r}\pf{i,j,r}{T}c_{r,j,\alpha}.
\end{align*}

We set $d_{\beta,\alpha}=-d_{\alpha,\beta}$ for any number of superscripts.

\begin{theorem}\label{thm:Appendix}
A DG algebra structure on the resolution constructed in \Cref{thm:res} is given by the following product rules

\noindent \underline{a. $F_1'\otimes F_1' \rightarrow F_2 \oplus (\oplus_{k=1}^t G_2^k)$}\\
\begin{equation*}
e_i\cdot e_j\colonequals e_i\cdot_Fe_j+\sum_{k=1}^t d^{k,i,j}_{1,2}v^k_{1,2}+d^{k,i,j}_{1,3}v^k_{1,3} + d^{k,i,j}_{2,3}v^k_{2,3},
\end{equation*}
for $t+1\leq i,j\leq m$.

\vspace{4mm}
\noindent\underline{b. $F_1'\otimes G_1^i \rightarrow F_2 \oplus (\oplus_{k=1}^t G_2^k)$}\\
\begin{equation*}
e_j\cdot u_l^i\colonequals z_le_i\cdot_Fe_j +\sum_{k=1}^t d^{k,i,j,l}_{1,2}v^k_{1,2}+d^{k,i,j,l}_{1,3}v^k_{1,3} + d^{k,i,j,l}_{2,3}v^k_{2,3},
\end{equation*}
for $1\leq i\leq t$ and $t+1\leq j\leq m$.

\vspace{4mm}
\noindent\underline{c. $G_1^i \otimes G_1^i \rightarrow F_2 \oplus (\oplus_{k=1}^t G_2^k)$}\\
\begin{equation*}
u_l^i\cdot u_s^i\colonequals -y_i v_{l,s}^i,
\end{equation*}
for $1\leq i\leq t$ and $l<s$.

\vspace{4mm}
\noindent\underline{d. $G_1^i \otimes G_1^j \rightarrow F_2 \oplus (\oplus_{k=1}^t G_2^k)$ with $i< j$}\\
\begin{equation*}
u_l^i \cdot u_s^j\colonequals z_l z_s e_i\cdot_Fe_j + \sum_{k=1}^t d^{k,i,j,l,s}_{1,2}v^k_{1,2}+d^{k,i,j,l,s}_{1,3}v^k_{1,3} + d^{k,i,j,l,s}_{2,3}v^k_{2,3},
\end{equation*}
for $ 1\leq i,j\leq t$.

\vspace{4mm}
\noindent \underline{e. $F_1' \otimes F_2 \rightarrow F_3 \oplus \left( \oplus_{k=1}^t G_3^k \right)$}\\
\begin{equation*}
e_i \cdot f_j\colonequals \begin{cases}
e_i\cdot_Ff_j, & t+1\leq j\leq m \\
- \sum_{r=1}^m c_{r,j,3} d^{j,i,r}_{1,2} w^j, & 1\leq j\leq t
\end{cases}
\end{equation*}
for $t+1\leq i \leq m$.

\vspace{4mm}
\noindent \underline{f. $F_1' \otimes G_2^i \rightarrow F_3 \oplus \left( \oplus_{k=1}^t G_3^k \right)$}\\
\begin{equation*}
e_j \cdot v_{\alpha,\beta}^i \colonequals (-1)^p\sum_{r=1}^m\sigma_{i,j,r}\pf{i,j,r}{T}c_{r,i,p}w^i,
\end{equation*}
with $p\in\{1,2,3\}\backslash\{\alpha,\beta\}$, $1\leq i \leq t$ and $t+1\leq j\leq m$.

\vspace{4mm}
\noindent \underline{g. $G_1^i \otimes G_2^i \rightarrow F_3 \oplus \left( \oplus_{k=1}^t G_3^k \right)$}\\
\begin{equation*}
u_l^i\cdot v_{\alpha,\beta}^i \colonequals -y_iu_l^i\cdot_{G^i}v^i_{\alpha,\beta},
\end{equation*}
for $1\leq i\leq t$.

\vspace{4mm}
\noindent\underline{h. $G_1^i \otimes G_2^j \rightarrow F_3 \oplus \left( \oplus_{k=1}^t G_3^k \right)$ with $i\neq j$}\\
\begin{equation*}
u_l^i \cdot v_{\alpha,\beta}^j\colonequals(-1)^{p+1}z_l\sum_{r=1}^m\sigma_{i,j,r}\pf{i,j,r}{T}c_{r,j,p}w^j
\end{equation*}
with $p\in\{1,2,3\}\backslash\{\alpha,\beta\}, 1\leq i,j\leq t$.

\vspace{4mm}
\noindent\underline{i. $G_1^i \otimes F_2 \rightarrow F_3 \oplus \left( \oplus_{k=1}^t G_3^k \right)$}\\
\begin{equation*}
u_l^i \cdot f_j\colonequals -z_le_i\cdot_Ff_j + (-1)^{l+1}\delta_{i,j}d^i_{\phi,\psi}w^i,
\end{equation*}
with $\{\phi,\psi,l\}=\{1,2,3\}$ and $\phi<\psi$, $1\leq i\leq t$ and $1\leq j\leq m$.
\end{theorem}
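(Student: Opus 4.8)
The plan is to run the same machine that proved \Cref{thm:prod}. Vandebogert's \cite[Theorem 3.2]{DGTrimmed} already supplies a DG algebra structure on the resolution of \Cref{thm:res}: it prescribes, for each pair of basis elements, a product of a fixed shape whose ``correction'' components must be chosen to be liftings through the differentials $\partial_\bullet$, and it is that theorem that guarantees the DG axioms (associativity included) once such liftings exist. Consequently the entire content of \Cref{thm:Appendix} is to exhibit the liftings explicitly and check them: since products $a$, $b$, $e$ were handled in \Cref{thm:prod}, it remains to treat $c$, $d$, $f$, $g$, $h$, $i$. For each of these, the verification reduces, exactly as before, to one compatibility identity per sector — the sectors being the cases $k=i$, $k=j$, and $k\notin\{i,j\}$ — and each identity is then deduced from Knuth's pfaffian relations \eqref{eq:singlepf}, \eqref{eq:3pf0}, \eqref{eq:triplepf}, \eqref{eq:5pf0} together with the sign identities \eqref{eq:Sig3Rel}, \eqref{eq:Sigma3Theta}, \eqref{eq:Sigma5Theta}.

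I would carry this out in the order forced by the dependencies among the products, just as product $e$ was expressed through product $b$. First the internal products $c$ and $g$, which live entirely inside the $i$-th Koszul factor $G^i_\bullet$: there the relevant differential is $\delta^i_\bullet$ but with $\partial_1(u^i_l)=-y_iz_l$ carrying an extra factor $-y_i$, so the needed identity is simply the Leibniz rule of the Koszul DG algebra with that factor threaded through — product $c$ uses $\partial_1$ directly, product $g$ uses product $c$ — and both are immediate. Next the remaining products with target $F_2\oplus(\oplus_kG^k_2)$, namely $d$ (with $b$ as input): I would expand both sides of the lifting equation, use \eqref{eq:Tji} to rewrite entries of $T$ as $\sum_lc_{i,j,l}z_l$ and \eqref{eq:triplepf} to unfold $\pf{i,j,r}{T}$, and then match the coefficient of each $z_l$ and of each $u^k_l$; the constants $d^{k,i,j,l,s}_{\alpha,\beta}$ together with the convention $d_{\beta,\alpha}=-d_{\alpha,\beta}$ are rigged precisely so that the ``spurious'' $z_l$-coefficient cancels by \eqref{eq:Sig3Rel} or by the five-index relation $\sigma_{i,j,r}\sigma_{i,j,r,h,k}=-\sigma_{i,j,h}\sigma_{i,j,h,r,k}$ already used for product $a$. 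Finally the products with target $F_3\oplus(\oplus_kG^k_3)$, namely $f$, $h$, and the $w^i$-component of $i$ (with inputs $b$, $c$, $d$, $e$): these are handled exactly as \eqref{eq:e2} and \eqref{eq:e3} were, the leftover terms being annihilated by \eqref{eq:5pf0}.

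The hard part is bookkeeping rather than any isolated difficulty: one must simultaneously juggle the overlapping case splits ($k=i$ vs.\ $k=j$ vs.\ $k\notin\{i,j\}$, and $l\in\{\alpha,\beta\}$ vs.\ $l\notin\{\alpha,\beta\}$), the interaction of the three-index sign family $\sigma_{i,j,r}$ with the five-index family $\sigma_{i,j,r,h,k}$, and — most delicately — the mutual consistency of the formulas, since checking a higher product such as $h$ feeds in the explicit forms of the lower ones $f$ and $d$, so a sign slip anywhere propagates downstream. A last routine point to record is that the prescribed values on basis elements are consistent with graded-commutativity; this is built into the antisymmetry of the $d$-constants and of the Koszul factors, but it should be stated.
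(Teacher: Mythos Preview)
Your plan is correct and matches the paper's approach almost exactly: products $c$ and $g$ are declared immediate from \cite[Theorem~3.2]{DGTrimmed}, product $d$ is bootstrapped from the already-verified $a$ and $b$ (the constants $d^{k,i,j,l,s}_{\alpha,\beta}$ being defined precisely so that the $k\neq i,j$ and $k=i$ cases reduce to those), and $f$, $h$, $i$ are checked sector-by-sector by expanding the lower products and cancelling. One small correction to your toolkit accounting: the leftover terms in $f$ and $h$ are killed by \eqref{eq:singlepf} and the definitions of the $d$-constants rather than \eqref{eq:5pf0}, and in product $i$ the relevant vanishing comes from \eqref{eq:3pf0}; the five-term identity \eqref{eq:5pf0} was specific to product $e$.
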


\begin{proof}
A proof of products $a$, $b$, and $e$ was given in \Cref{thm:prod}. Here we prove the remaining products.

\vspace{4mm}
\noindent\underline{$c$. $G_1^i \otimes G_1^i \rightarrow F_2 \oplus (\oplus_{k=1}^t G_2^k)$}\\
This product follows immediately from \cite[Theorem 3.2]{DGTrimmed}.

\vspace{4mm}
\noindent\underline{$d$. $G_1^i \otimes G_1^j \rightarrow F_2 \oplus (\oplus_{k=1}^t G_2^k)$ with $i\neq j$}\\
By \cite[Theorem 3.2]{DGTrimmed}, it suffices to show that
\begin{align}
\delta_2^i(d^{i,i,j,l,s}_{1,2}v^i_{1,2}+d^{i,i,j,l,s}_{1,3}v^i_{1,3} + d^{i,i,j,l,s}_{2,3}v^i_{2,3}) &=z_lz_sq_1^i(e_i\cdot_F e_j) + z_sy_ju_l^i,\label{eq:di}\\
\delta_2^j(d^{j,i,j,l,s}_{1,2}v^j_{1,2}+d^{j,i,j,l,s}_{1,3}v^j_{1,3} + d^{j,i,j,l,s}_{2,3}v^j_{2,3})&=z_lz_sq_1^j(e_i\cdot_F e_j) - z_ly_iu_s^j,\label{eq:dj}\\
\delta_2^k(d^{k,i,j,l,s}_{1,2}v^k_{1,2}+d^{k,i,j,l,s}_{1,3}v^k_{1,3} + d^{k,i,j,l,s}_{2,3}v^k_{2,3})&=z_lz_sq_1^k(e_i\cdot_Fe_j),\;\mathrm{for}\;k\neq i,j.\label{eq:dk}
\end{align}
We first prove \eqref{eq:di}. By the definitions of $d_{1,2}^{i,i,j,l,s}$, $d_{1,3}^{i,i,j,l,s}$ and $d_{2,3}^{i,i,j,l,s}$, the left hand side is equal to
\begin{align*}
&\delta_2^i(z_sd^{i,i,j,l}_{1,2}v^i_{1,2}+z_sd^{i,i,j,l}_{1,3}v^i_{1,3} + z_sd^{i,i,j,l}_{2,3}v^i_{2,3})\\
= &z_s\delta_2^i(d^{i,i,j,l}_{1,2}v^i_{1,2}+d^{i,i,j,l}_{1,3}v^i_{1,3} + d^{i,i,j,l}_{2,3}v^i_{2,3})\\
= &z_s(y_ju_l^i+z_lq_1^i(e_i\cdot_Fe_j))
\end{align*}
where the last equality was shown in the proof of \eqref{eq:bi} in product \textit{b}. Now we prove \eqref{eq:dj}. The left hand side is equal to 
\begin{align*}
(-d^{j,i,j,l,s}_{1,2}z_2 - d^{j,i,j,l,s}_{1,3}z_3)u^j_1 + (d^{j,i,j,l,s}_{1,2}z_1 -d^{j,i,j,l,s}_{2,3}z_3)u^j_2 + (d^{j,i,j,l,s}_{1,3}z_1+d^{j,i,j,l,s}_{2,3}z_2)u^j_3,
\end{align*}
while the right side is equal to 
\begin{align*}
    &z_l(z_sq_1^j(e_i\cdot_Fe_j) - y_iu_s^j)\\
    = &z_l\left(z_sq_1^j\left( \sum_{r=1}^{m} \sigma_{i,j,r} \pf{i,j,r}{T} f_r \right)- y_iu_s^j\right)\\
    = &z_l\left(z_s \sum_{r=1}^{m} \sigma_{i,j,r} \pf{i,j,r}{T} (c_{r,j,1}u_1^j + c_{r,j,2}u_2^j + c_{r,j,3}u_3^j)- y_iu_s^j\right).
\end{align*}
To prove that the coefficients of $u_1^j,u_2^j$ and $u_3^j$ match, we assume that $s=1$; the remaining cases are proved similarly. The coefficients of $u_2^j$ and $u_3^j$ match by the definition of the constants $d^{j,i,j,l,1}_{1,2}$, $d^{j,i,j,l,1}_{1,3}$ and $d^{j,i,j,l,1}_{2,3}$. Expanding $y_i$ using \eqref{eq:singlepf}, it follows that the coefficient of $u_1^j$ in $z_lz_sq_1^j(e_i\cdot_F e_j) - z_ly_iu_s^j$ is
\begin{samepage}
\begin{align*}
     &z_lz_1\sum_{r=1}^{m} \sigma_{i,j,r} \pf{i,j,r}{T} c_{r,j,1} - z_lz_1\sum_{r=1}^m \sigma_{i,j,r} \pf{i,j,r}{T} c_{r,j,1}\\
     - &z_lz_2\sum_{r=1}^m \sigma_{i,j,r} \pf{i,j,r}{T} c_{r,j,2} - z_lz_3\sum_{r=1}^m \sigma_{i,j,r} \pf{i,j,r}{T} c_{r,j,3} \\
     = - &z_lz_2\sum_{r=1}^m \sigma_{i,j,r} \pf{i,j,r}{T} c_{r,j,2} - z_lz_3\sum_{r=1}^m \sigma_{i,j,r} \pf{i,j,r}{T} c_{r,j,3}\\
     = -&d_{1,2}^{j,i,j,l,1}z_2 -d_{1,3}^{j,i,j,l,1}z_3.
\end{align*}
\end{samepage}
Finally we prove \eqref{eq:dk}. By the definitions of $d_{1,2}^{k,i,j,l,s}$, $d_{1,3}^{k,i,j,l,s}$ and $d_{2,3}^{k,i,j,l,s}$, the left hand side is equal to
\begin{align*}
&\delta_2^k(z_lz_sd^{k,i,j}_{1,2}v^k_{1,2}+z_lz_sd^{k,i,j}_{1,3}v^k_{1,3} + z_lz_sd^{k,i,j}_{2,3}v^k_{2,3})\\
= &z_lz_s\delta_2^k(d^{k,i,j}_{1,2}v^k_{1,2}+d^{k,i,j}_{1,3}v^k_{1,3} + d^{k,i,j}_{2,3}v^k_{2,3})\\
= &z_lz_sq_1^k(e_i\cdot_Fe_j)
\end{align*}
where last equality was shown in the proof of product \textit{a}.

\vspace{4mm}
\noindent \underline{$f$. $F_1' \otimes G_2^i \rightarrow F_3 \oplus \left( \oplus_{k=1}^t G_3^k \right)$}\\
Set 
\[
g_2^{\prime k} = \left(z_{\alpha} d^{k,i,j,\beta}_{1,2}-z_\beta d^{k,i,j,\alpha}_{1,2}\right) v^k_{1,2} + \left(z_{\alpha} d^{k,i,j,\beta}_{1,3}-z_\beta d^{k,i,j,\alpha}_{1,3}\right) v^k_{1,3} + \left(z_{\alpha}  d^{k,i,j,\beta}_{2,3}-z_\beta d^{k,i,j,\alpha}_{2,3}\right) v^k_{2,3}, \quad1\leq k\leq t.
\]
By \cite[Theorem 3.2]{DGTrimmed}, it suffices to show that
\begin{align}
\partial_1(e_j)v_{\alpha,\beta}^i - e_j\cdot \delta_2^i(v_{\alpha,\beta}^i) &= y_jv_{\alpha,\beta}^i - \sum_{k=1}^t g_2^{\prime k}\label{eq:f1}\\
g_2^{\prime k}&=0,\;\mathrm{if}\;k\neq i,\label{eq:f2}\\
\delta_3^i\left((-1)^p\sum_{r=1}^m\sigma_{i,j,r}\pf{i,j,r}{T}c_{r,i,p}w^i\right) &=  y_jv_{\alpha,\beta}^i - g_2^{\prime i}\label{eq:f3}.
\end{align}

We first prove \eqref{eq:f1}.
\begin{align*}
y_j v_{\alpha,\beta}^i &- e_j\cdot \left(z_{\alpha}u_{\beta}^i - z_{\beta}u_{\alpha}^i\right)\\
= y_j v_{\alpha,\beta}^i &- z_{\alpha}z_\beta e_i\cdot_Fe_j - z_{\alpha}\sum_{k=1}^t d^{k,i,j,\beta}_{1,2}v^k_{1,2}+d^{k,i,j,\beta}_{1,3}v^k_{1,3} + d^{k,i,j,\beta}_{2,3}v^k_{2,3}\\
&+ z_{\beta}z_\alpha e_i\cdot_Fe_j + z_\beta\sum_{k=1}^t d^{k,i,j,\alpha}_{1,2}v^k_{1,2}+d^{k,i,j,\alpha}_{1,3}v^k_{1,3} + d^{k,i,j,\alpha}_{2,3}v^k_{2,3}\\
= y_jv_{\alpha,\beta}^i &- z_{\alpha}\sum_{k=1}^t d^{k,i,j,\beta}_{1,2}v^k_{1,2}+d^{k,i,j,\beta}_{1,3}v^k_{1,3} + d^{k,i,j,\beta}_{2,3}v^k_{2,3} + z_\beta \sum_{k=1}^t d^{k,i,j,\alpha}_{1,2}v^k_{1,2}+d^{k,i,j,\alpha}_{1,3}v^k_{1,3} + d^{k,i,j,\alpha}_{2,3}v^k_{2,3}\\
= y_jv_{\alpha,\beta}^i &- \sum_{k=1}^t g_2^{\prime k},
\end{align*}
where the first equality follows from applying product $b$, the second equality follows from cancellation, and the third equality follows from definition of $g_2^{\prime k}$.

For \eqref{eq:f2} we notice that if $k\neq i$, then by the definition of $d_{-,-}^{k,i,j,-}$, we have that $g_2^{\prime k}$ is equal to
\begin{align*}
&z_{\alpha}z_{\beta}\left( d^{k,i,j}_{1,2}v^k_{1,2}+d^{k,i,j}_{1,3}v^k_{1,3} + d^{k,i,j}_{2,3}v^k_{2,3}\right) - z_\alpha z_\beta\left( d^{k,i,j}_{1,2}v^k_{1,2}+d^{k,i,j}_{1,3}v^k_{1,3} + d^{k,i,j}_{2,3}v^k_{2,3}\right) = 0.
\end{align*}

Now we prove \eqref{eq:f3}. We assume that $\alpha=1$ and $\beta=2$; the remaining two cases are proved similarly. The left side is equal to
\begin{align*}
&-\sum_{r=1}^m\sigma_{i,j,r}\pf{i,j,r}{T}c_{r,i,3}\delta_3^i(w^i)\\ 
= &-\sum_{r=1}^m\sigma_{i,j,r}\pf{i,j,r}{T}c_{r,i,3}(z_3 v_{1,2}^i - z_2 v_{1,3}^i + z_1 v_{2,3}^i)\\
= &-z_3\sum_{r=1}^m\sigma_{i,j,r}\pf{i,j,r}{T}c_{r,i,3}v_{1,2}^i +z_2\sum_{r=1}^m\sigma_{i,j,r}\pf{i,j,r}{T}c_{r,i,3}v_{1,3}^i -z_1\sum_{r=1}^m\sigma_{i,j,r}\pf{i,j,r}{T}c_{r,i,3}v_{2,3}^i,
\end{align*}
while the right hand side is equal to
\begin{align*}
&(y_j + z_{2}d^{i,i,j,1}_{1,2} - z_1 d^{i,i,j,2}_{1,2})v_{1,2}^i + (z_{2}d^{i,i,j,1}_{1,3} - z_1 d^{i,i,j,2}_{1,3})v_{1,3}^i + (z_{2}d^{i,i,j,1}_{2,3} - z_1 d^{i,i,j,2}_{2,3})v_{2,3}^i\\
= & (y_j + z_{2}d^{i,i,j,1}_{1,2} - z_1 d^{i,i,j,2}_{1,2})v_{1,2}^i + z_{2}d^{i,i,j,1}_{1,3}v_{1,3}^i - z_1 d^{i,i,j,2}_{2,3}v_{2,3}^i\\
= & -z_1\sum_{r=1}^m \sigma_{i,j,r} \pf{i,j,r}{T} c_{r,i,1}v_{1,2}^i -z_2\sum_{r=1}^m \sigma_{i,j,r} \pf{i,j,r}{T} c_{r,i,2}v_{1,2}^i -z_3\sum_{r=1}^m \sigma_{i,j,r} \pf{i,j,r}{T} c_{r,i,3}v_{1,2}^i\\
&+z_{2}\sum_{r=1}^m\sigma_{i,j,r}\pf{i,j,r}{T}c_{r,i,2}v_{1,2}^i + z_1 \sum_{r=1}^m\sigma_{i,j,r}\pf{i,j,r}{T}c_{r,i,1}v_{1,2}^i\\ 
&+ z_2\sum_{r=1}^m\sigma_{i,j,r}\pf{i,j,r}{T}c_{r,i,3}v_{1,3}^i -z_1\sum_{r=1}^m\sigma_{i,j,r}\pf{i,j,r}{T}c_{r,i,3}v_{2,3}^i\\
= & -z_3\sum_{r=1}^m \sigma_{i,j,r} \pf{i,j,r}{T} c_{r,i,3}v_{1,2}^i + z_2\sum_{r=1}^m\sigma_{i,j,r}\pf{i,j,r}{T}c_{r,i,3}v_{1,3}^i -z_1\sum_{r=1}^m\sigma_{i,j,r}\pf{i,j,r}{T}c_{r,i,3}v_{2,3}^i,
\end{align*}
where the first equality follows from the definition of $d_{1,3}^{i,i,j,2}$ and $d_{2,3}^{i,i,j,1}$, the second equality follows from \eqref{eq:singlepf} and from the definition of the constants $d_{-,-}^{i,i,j,-}$, and the third equality follows from cancellation.

\vspace{4mm}
\noindent \underline{$g$. $G_1^i \otimes G_2^i \rightarrow F_3 \oplus \left( \oplus_{k=1}^t G_3^k \right)$}\\
This product follows immediately from \cite[Theorem 3.2]{DGTrimmed}.

\vspace{4mm}
\noindent\underline{$h$. $G_1^i \otimes G_2^j \rightarrow F_3 \oplus \left( \oplus_{k=1}^t G_3^k \right)$ with $i\neq j$}\\
For $1\leq k\leq t$ set
\begin{align*}
g_2^{\prime k}&=\left(z_\alpha d_{1,2}^{k,i,j,l,\beta} - z_\beta d_{1,2}^{k,i,j,l,\alpha}\right) v_{1,2}^k + \left(z_\alpha d_{1,3}^{k,i,j,l,\beta} - z_\beta d_{1,3}^{k,i,j,l,\alpha}\right) v_{1,3}^k + \left(z_\alpha d_{2,3}^{k,i,j,l,\beta} - z_\beta d_{2,3}^{k,i,j,l,\alpha}\right) v_{2,3}^k.
\end{align*}
By \cite[Theorem 3.2]{DGTrimmed}, it suffices to show that
\begin{align}
\partial_1(u_l^i)v_{\alpha,\beta}^j - u_l^i\cdot \partial_2(v_{\alpha,\beta}^j) &= -y_iz_lv_{\alpha,\beta}^j - \sum_{k=1}^t g_{2}^{\prime k}\label{eq:h1}\\
g_2^{\prime k}, &=0 \;\mathrm{if}\;k\neq j,\label{eq:h2}\\
\delta_3^j\left((-1)^{p+1}z_l\sum_{r=1}^m\sigma_{i,j,r}\pf{i,j,r}{T}c_{r,i,p}w^j\right) &= g_2^{\prime j} + y_iz_lv_{\alpha,\beta}^j.\label{eq:h3}
\end{align}

We first prove \eqref{eq:h1}
\begin{align*}
&\partial_1(u_l^i)v_{\alpha,\beta}^j - u_l^i\cdot \partial_2(v_{\alpha,\beta}^j)\\
= &-y_iz_l v_{\alpha,\beta}^j - z_{\alpha} u_l^i \cdot u_{\beta}^j + z_{\beta}u_l^i \cdot u_{\alpha}^j\\
= &-y_iz_l v_{\alpha,\beta}^j - z_{\alpha}z_lz_\beta e_i\cdot_Fe_j- z_\alpha\sum_{k=1}^t d_{1,2}^{k,i,j,l,\beta}v_{1,2} + d_{1,3}^{k,i,j,l,\beta}v_{1,3} + d_{2,3}^{k,i,j,l,\beta}v_{2,3}\\
&+ z_{\beta}z_lz_\alpha e_i\cdot_Fe_j + z_\beta\sum_{k=1}^t d_{1,2}^{k,i,j,l,\alpha}v_{1,2} + d_{1,3}^{k,i,j,l,\alpha}v_{1,3} + d_{2,3}^{k,i,j,l,\alpha}v_{2,3}\\
= &-y_iz_l v_{\alpha,\beta}^j - z_\alpha \sum_{k=1}^t d_{1,2}^{k,i,j,l,\beta}v_{1,2} + d_{1,3}^{k,i,j,l,\beta}v_{1,3} + d_{2,3}^{k,i,j,l,\beta}v_{2,3}\\
&+ z_\beta\sum_{k=1}^t d_{1,2}^{k,i,j,l,\alpha}v_{1,2} + d_{1,3}^{k,i,j,l,\alpha}v_{1,3} + d_{2,3}^{k,i,j,l,\alpha}v_{2,3}\\
= &-y_iz_lv_{\alpha,\beta}^j - \sum_{k=1}^t g_{2}^{\prime k},
\end{align*}
where the first equality follows from distribution, the second equality follows from applying product $d$, the third equality follows from cancellation, and the fourth equality follows from the definition of $g_2^{\prime k}$.
For \eqref{eq:h2} we notice that if $k\neq j$, then $g_2^{\prime k}$ is equal to
\begin{align*}
\left(z_\alpha z_\beta d_{1,2}^{k,i,j,l} - z_\alpha z_\beta d_{1,2}^{k,i,j,l}\right) v_{1,2}^k + \left(z_\alpha z_\beta d_{1,3}^{k,i,j,l} - z_\alpha z_\beta d_{1,3}^{k,i,j,l}\right) v_{1,3}^k + \left(z_\alpha z_\beta d_{2,3}^{k,i,j,l} - z_\alpha z_\beta d_{2,3}^{k,i,j,l}\right) v_{2,3}^k = 0.
\end{align*}

The proof of \eqref{eq:h3} is similar to the proof of \eqref{eq:f3} and is therefore omitted.

\vspace{4mm}
\noindent\underline{$i$. $G_1^i \otimes F_2 \rightarrow F_3 \oplus \left( \oplus_{k=1}^t G_3^k \right)$}\\
Set 
\begin{align*}
g_2^k &= \sum_{r=t+1}^m T_{r,j} \left(d^{k,i,r,l}_{1,2}v^k_{1,2}+d^{k,i,r,l}_{1,3}v^k_{1,3} + d^{k,i,r,l}_{2,3}v^k_{2,3}\right), \quad1\leq k\leq t,\\
g_2'^{k,h} &= \sum_{s=1}^3 c_{j,k,s} \left(d^{h,i,k,l,s}_{1,2}v^h_{1,2}+d^{h,i,k,l,s}_{1,3}v^h_{1,3} + d^{h,i,k,l,s}_{2,3}v^h_{2,3}\right), \quad1\leq k,h\leq t.
\end{align*}
By \cite[Theorem 3.2]{DGTrimmed}, it suffices to show that
\begin{align}
&\partial_1(u_l^i)f_j - u_l^i\cdot \partial_2(f_j) = -z_l D_3(e_i \cdot_F f_j) - y_iu_l^i\cdot_{G^i} q_1^i(f_j) + g_2^i + \sum_{\underset{k\neq i}{k=1}}^t \left(g_2^k + \sum_{h=1}^t g_2'^{k,h} \right).\label{eq:i1}\\
&g_2^k + \sum_{\underset{h\neq i}{h=1}}^t g_2'^{h,k} + z_lq_2^k(e_i\cdot_Ff_j) = 0.\label{eq:i2}\\
&g_2^i + \sum_{\underset{h\neq i}{h=1}}^t g_2'^{h,i} + z_lq_2^i(e_i\cdot_Ff_j) -y_i u_l^i\cdot_{G^i} q_1^i(f_j) = \delta_3^i \left((-1)^{l+1}\delta_{i,j}d^i_{\phi,\psi}w^i\right).\label{eq:i3}
\end{align}
We first prove \eqref{eq:i1}.
\begin{align*}
&\partial_1(u_l^i)f_j - u_l^i\cdot \partial_2(f_j) \\
= &-y_iz_lf_j - u_l^i\cdot\left(D_2'(f_j)-\sum_{k=1}^t q_1^k(f_j)\right)\\
= &-y_iz_lf_j - u_l^i\cdot \sum_{r=t+1}^m T_{r,j}e_r + u_l^i \cdot \sum_{k=1}^t \sum_{s=1}^3 c_{j,k,s} u_s^k\\
= &-y_iz_lf_j + \sum_{r=t+1}^m T_{r,j}z_le_i\cdot_Fe_r + \sum_{r=t+1}^m T_{r,j}\sum_{k=1}^t \left(d^{k,i,r,l}_{1,2}v^k_{1,2}+d^{k,i,r,l}_{1,3}v^k_{1,3} + d^{k,i,r,l}_{2,3}v^k_{2,3}\right)\\ 
&+ \sum_{\underset{k\neq i}{k=1}}^t \sum_{s=1}^3 c_{j,k,s} u_l^i\cdot u_s^k + \sum_{s=1}^3 c_{j,i,s} u_l^i\cdot u_s^i \\
= &-y_iz_lf_j + \sum_{r=t+1}^m T_{r,j}z_le_i\cdot_Fe_r + \sum_{r=t+1}^m T_{r,j}\sum_{k=1}^t \left(d^{k,i,r,l}_{1,2}v^k_{1,2}+d^{k,i,r,l}_{1,3}v^k_{1,3} + d^{k,i,r,l}_{2,3}v^k_{2,3}\right)\\
&+ \sum_{\underset{k\neq i}{k=1}}^t \sum_{s=1}^3 c_{j,k,s} z_lz_se_i\cdot_Fe_k + \sum_{\underset{k\neq i}{k=1}}^t \sum_{s=1}^3 c_{j,k,s} \sum_{h=1}^t \left(d^{h,i,k,l,s}_{1,2}v^h_{1,2}+d^{h,i,k,l,s}_{1,3}v^h_{1,3} + d^{h,i,k,l,s}_{2,3}v^h_{2,3}\right) - \sum_{s=1}^3 c_{j,i,s} y_iv_{l,s}^i\\
=& -y_iz_lf_j + \sum_{r=t+1}^m T_{r,j}z_le_i\cdot_Fe_r+ \sum_{r=t+1}^m T_{r,j}\sum_{k=1}^t \left(d^{k,i,r,l}_{1,2}v^k_{1,2}+d^{k,i,r,l}_{1,3}v^k_{1,3} + d^{k,i,r,l}_{2,3}v^k_{2,3}\right) + \sum_{\underset{r\neq i}{r=1}}^t T_{r,j} z_le_i\cdot_Fe_r\\
&+ \sum_{\underset{k\neq i}{k=1}}^t \sum_{s=1}^3 c_{j,k,s} \sum_{h=1}^t \left(d^{h,i,k,l,s}_{1,2}v^h_{1,2}+d^{h,i,k,l,s}_{1,3}v^h_{1,3} + d^{h,i,k,l,s}_{2,3}v^h_{2,3}\right) - \sum_{s=1}^3 c_{j,i,s} y_iv_{l,s}^i\\
= &z_l\left(\sum_{r=1}^m T_{r,j}e_i\cdot_Fe_r - y_if_j\right) + \sum_{r=t+1}^m T_{r,j}\sum_{k=1}^t \left(d^{k,i,r,l}_{1,2}v^k_{1,2}+d^{k,i,r,l}_{1,3}v^k_{1,3} + d^{k,i,r,l}_{2,3}v^k_{2,3}\right)\\
&+ \sum_{\underset{k\neq i}{k=1}}^t \sum_{s=1}^3 c_{j,k,s} \sum_{h=1}^t d^{h,i,k,l,s}_{1,2}v^h_{1,2}+d^{h,i,k,l,s}_{1,3}v^h_{1,3} + d^{h,i,k,l,s}_{2,3}v^h_{2,3} - \sum_{s=1}^3 c_{j,i,s} y_iv_{l,s}^i\\
=& -z_l D_3(e_i \cdot_F f_j) + \sum_{k=1}^t g_2^k +\sum_{\underset{k\neq i}{k=1}}^t\sum_{h=1}^t g_2'^{k,h} - y_iu_l^i\cdot_{G^i} q_1^i(f_j)\\
=& -z_l D_3(e_i \cdot_F f_j) - y_iu_l^i\cdot_{G^i} q_1^i(f_j) + g_2^i + \sum_{\underset{k\neq i}{k=1}}^t \left(g_2^k + \sum_{h=1}^t g_2'^{k,h} \right)
\end{align*}
where the first and second equalities follow from applying the differential maps, the third equality follows from applying product $b$ and separating the case $k=i$, the fourth equality follows from applying products $c$ and $d$, the fifth equality follows from reindexing and \eqref{eq:Tji}, the sixth equality follows from factoring out a $z_l$, the seventh equality follows from the definitions of $D_3$, $g_2^k$, $g_2^{\prime k,h}$, and the product in the Koszul complex, and the eighth equality is obvious.
Now we show \eqref{eq:i2}. Notice that if we fix $\alpha<\beta$, then the coefficient of $v^k_{\alpha,\beta}$ on the left hand side of \eqref{eq:i2}
is equal to
\begin{align*}
&\sum_{r=t+1}^m T_{r,j} d^{k,i,r,l}_{\alpha,\beta} + \sum_{\underset{h\neq i}{h=1}}^t \sum_{s=1}^3 c_{j,h,s} d^{k,i,h,l,s}_{\alpha,\beta} + \delta_{i,j}z_ld^k_{\alpha,\beta}\\
= &z_l\sum_{r=t+1}^m T_{r,j} d^{k,i,r}_{\alpha,\beta} + \sum_{\underset{h\neq i,k}{h=1}}^t \sum_{s=1}^3 c_{j,h,s} d^{k,i,h,l,s}_{\alpha,\beta} + \sum_{s=1}^3 c_{j,k,s} d^{k,i,k,l,s}_{\alpha,\beta} + \delta_{i,j}z_ld^k_{\alpha,\beta}\\
= &z_l\sum_{r=t+1}^m T_{r,j} d^{k,i,r}_{\alpha,\beta} + z_l \sum_{\underset{h\neq i,k}{h=1}}^t \sum_{s=1}^3 c_{j,h,s}z_s d^{k,i,h}_{\alpha,\beta} + c_{j,k,\alpha} d^{k,i,k,l,\alpha}_{\alpha,\beta} + c_{j,k,\beta} d^{k,i,k,l,\beta}_{\alpha,\beta} + \delta_{i,j}z_ld^k_{\alpha,\beta}\\
= &z_l\sum_{r=t+1}^m T_{r,j} d^{k,i,r}_{\alpha,\beta} + z_l \sum_{\underset{h\neq i,k}{h=1}}^t T_{h,j} d^{k,i,h}_{\alpha,\beta} + z_l\sum_{r=1}^m \sigma_{i,k,r} \pf{i,k,r}{T} c_{j,k,\alpha}c_{r,k,\beta}\\
&- z_l\sum_{r=1}^m \sigma_{i,k,r} \pf{i,k,r}{T} c_{r,k,\alpha}c_{j,k,\beta} + \delta_{i,j}z_l d^k_{\alpha,\beta}\\
= &z_l\sum_{r=1}^m T_{r,j} d^{k,i,r}_{\alpha,\beta} + z_l\sum_{r=1}^m \sigma_{i,k,r} \pf{i,k,r}{T} (c_{j,k,\alpha}c_{r,k,\beta} - c_{r,k,\alpha}c_{j,k,\beta}) + \delta_{i,j}z_l d^k_{\alpha,\beta}\\
= &z_l \left(\sum_{r=1}^m T_{r,j} d^{k,i,r}_{\alpha,\beta} + \sum_{r=1}^m \sigma_{i,k,r} \pf{i,k,r}{T} (c_{j,k,\alpha}c_{r,k,\beta} - c_{r,k,\alpha}c_{j,k,\beta}) + \delta_{i,j}d^k_{\alpha,\beta}\right)\\
= & z_l\cdot0\\
=& 0
\end{align*}
where the first equality follows from the definition of $d_{\alpha,\beta}^{k,i,r,l}$ and separating the case $h=k$, the second equality follows from definition of $d_{\alpha,\beta}^{k,i,h,l,s}$ and expanding the third term, the third equality follows from \eqref{eq:Tji} and the definition of $d_{\alpha,\beta}^{k,i,k,l,-}$, the fourth equality follows from combining summations, the fifth equality follows from factoring out $z_l$, and the sixth equality follows from the computations in the proof of \eqref{eq:e2}.
Finally we show \eqref{eq:i3}. We fix $\alpha<\beta$ and we calculate the coefficient of $v_{\alpha,\beta}^i$ on both sides of \eqref{eq:i3}. There are three cases to consider. First, if $\{l,\alpha,\beta\}=\{1,2,3\}$, then the coefficient of $v^i_{\alpha,\beta}$ on the left hand side of \eqref{eq:i3} is 
\begin{align*}
\sum_{r=t+1}^m T_{r,j}d^{i,i,r,l}_{\alpha,\beta} + \sum_{\underset{h\neq i}{h=1}}^t \sum_{s=1}^3 c_{j,h,s} z_s d_{\alpha,\beta}^{i,i,h,l} + \delta_{i,j} z_l d_{\alpha,\beta}^i,
\end{align*}
as $u_l^i\cdot_{G^i}u_l^i=0$, $u_l^i\cdot_{G^i}u_{\alpha}^i=v_{l,\alpha}^i$ and $u_l^i\cdot_{G^i}u_{\beta}^i=v_{l,\beta}^i$. Notice that since $d^{i,i,r,l}_{\alpha,\beta} = 0$ and $d^{i,i,h,l}_{\alpha,\beta} = 0$, the above display is equal to $\delta_{i,j}z_ld_{\alpha,\beta}^i$, which coincides with the coefficient of $v_{\alpha,\beta}^i$ on the right hand side of \eqref{eq:i3}. Now if $l=\alpha$ and $\{\alpha,\beta,\epsilon\}=\{1,2,3\}$, then the coefficient of $v^i_{\alpha,\beta}$ in the left hand side of \eqref{eq:i3} is
\begin{align*}
&\sum_{r=t+1}^m T_{r,j}d^{i,i,r,l}_{\alpha,\beta} + \sum_{h=1}^t \sum_{s=1}^3 c_{j,h,s}d^{i,i,h,l,s}_{\alpha,\beta} + \delta_{i,j} z_l d^i_{\alpha,\beta} - y_i c_{j,i,\beta}\\
= &\sum_{r=t+1}^m T_{r,j}d^{i,i,r,l}_{\alpha,\beta} + \sum_{h=1}^t \sum_{s=1}^3 z_sc_{j,h,s}d^{i,i,h,l}_{\alpha,\beta} + \delta_{i,j} z_l d^i_{\alpha,\beta} - y_i c_{j,i,\beta}\\
= &\sum_{r=t+1}^m T_{r,j}d^{i,i,r,l}_{\alpha,\beta} + \sum_{h=1}^t T_{h,j} d^{i,i,h,l}_{\alpha,\beta} + \delta_{i,j} z_l d^i_{\alpha,\beta} - y_i c_{j,i,\beta}\\
= &\sum_{r=t+1}^m T_{r,j}d^{i,i,r,l}_{\alpha,\beta} + \sum_{r=1}^t T_{r,j} d^{i,i,r,l}_{\alpha,\beta} + \delta_{i,j} z_l d^i_{\alpha,\beta} - y_i c_{j,i,\beta}\\
= &\sum_{r=1}^m T_{r,j}d^{i,i,r,l}_{\alpha,\beta} + \delta_{i,j} z_l d^i_{\alpha,\beta} - y_i c_{j,i,\beta}\\
= & \sum_{r=1}^m T_{r,j} \sum_{h=1}^m \sigma_{i,r,h} \pf{i,r,h}{T} c_{h,i,\beta} + \delta_{i,j} z_l d^i_{\alpha,\beta} - y_i c_{j,i,\beta}\\
= & \sum_{r=1}^m T_{r,j} \sum_{\underset{h\neq j}{h=1}}^m \sigma_{i,r,h} \pf{i,r,h}{T} c_{h,i,\beta} + \sum_{r=1}^m T_{r,j} \sigma_{i,r,j} \pf{i,r,j}{T} c_{j,i,\beta} + \delta_{i,j} z_l d^i_{\alpha,\beta} - y_i c_{j,i,\beta}\\
= &\sum_{r=1}^m T_{r,j} \sum_{\underset{h\neq j}{h=1}}^m \sigma_{i,r,h} \pf{i,r,h}{T} c_{h,i,\beta} + \sum_{r=1}^m T_{j,r} \sigma_{i,j,r} \pf{i,j,r}{T} c_{j,i,\beta} + \delta_{i,j} z_l d^i_{\alpha,\beta} - y_i c_{j,i,\beta}\\
= & \sum_{r=1}^m T_{r,j} \sum_{\underset{h\neq j}{h=1}}^m \sigma_{i,r,h} \pf{i,r,h}{T} c_{h,i,\beta} + y_i c_{j,i,\beta} + \delta_{i,j} z_l d^i_{\alpha,\beta} - y_i c_{j,i,\beta}\\
= & \sum_{r=1}^m T_{r,j} \sum_{\underset{h\neq j}{h=1}}^m \sigma_{i,r,h} \pf{i,r,h}{T} c_{h,i,\beta} + \delta_{i,j} z_l d^i_{\alpha,\beta}
\end{align*}
where the first equality follows from definition of $d_{\alpha,\beta}^{i,i,h,l,s}$, the second equality follows from \eqref{eq:Tji}, the third equality follows from reindexing, the fourth equality follows from combining summations, the fifth equality follows from definition of $d_{\alpha,\beta}^{i,i,r,l}$, the sixth equality follows from separating the case $h=j$, the seventh equality follows since $T$ is skew-symmetric and from \eqref{eq:Sig3Rel}, the eighth equality follows from \eqref{eq:y_i}, and the ninth equality follows from cancellation.
If $i\neq j$, then by \eqref{eq:3pf0},
\[
\sum_{r=1}^m T_{r,j} \sum_{\underset{h\neq j}{h=1}}^m \sigma_{i,r,h} \pf{i,r,h}{T} c_{h,i,\beta}=0,
\]
showing \eqref{eq:i3} when $l=\alpha$ and $i\neq j$.
If $i=j$, then we have 
\begin{align*}
&\sum_{r=1}^m T_{r,i} \sum_{\underset{h\neq i}{h=1}}^m \sigma_{i,r,h} \pf{i,r,h}{T} c_{h,i,\beta} + z_{\alpha} d^i_{\alpha,\beta}\\
= &\sum_{r=1}^m T_{r,i} \sum_{\underset{h\neq i}{h=1}}^m \sigma_{i,r,h} \pf{i,r,h}{T} c_{h,i,\beta} + z_{\alpha} \sum_{h=1}^m \sum_{r=1}^m \sigma_{h,i,r} \pf{h,i,r}{T} c_{r,i,\alpha} c_{h,i,\beta}\\
= &-\sum_{r=1}^m \sum_{s=1}^3 c_{r,i,s}z_s \sum_{\underset{h\neq i}{h=1}}^m \sigma_{i,r,h} \pf{i,r,h}{T} c_{h,i,\beta} + z_{\alpha} \sum_{h=1}^m \sum_{r=1}^m \sigma_{h,i,r} \pf{h,i,r}{T} c_{r,i,\alpha} c_{h,i,\beta}\\
= &-z_{\alpha}\sum_{r=1}^m \sum_{\underset{h\neq i}{h=1}}^m \sigma_{i,r,h} \pf{i,r,h}{T} c_{r,i,\alpha}c_{h,i,\beta} - z_{\beta}\sum_{r=1}^m \sum_{\underset{h\neq i}{h=1}}^m \sigma_{i,r,h} \pf{i,r,h}{T} c_{r,i,\beta}c_{h,i,\beta}\\ 
&- z_\epsilon\sum_{r=1}^m \sum_{\underset{h\neq i}{h=1}}^m \sigma_{i,r,h} \pf{i,r,h}{T} c_{r,i,\epsilon}c_{h,i,\beta} + z_{\alpha} \sum_{h=1}^m \sum_{r=1}^m \sigma_{h,i,r} \pf{h,i,r}{T} c_{r,i,\alpha} c_{h,i,\beta}\\
= &-z_{\beta}\sum_{\underset{r\neq i}{r=1}}^m \sum_{\underset{h\neq i}{h=1}}^m \sigma_{i,r,h} \pf{i,r,h}{T} c_{r,i,\beta}c_{h,i,\beta} - z_\epsilon\sum_{r=1}^m \sum_{\underset{h\neq i}{h=1}}^m \sigma_{i,r,h} \pf{i,r,h}{T} c_{r,i,\epsilon}c_{h,i,\beta}\\
= &- z_\epsilon\sum_{\underset{r\neq i}{r=1}}^m \sum_{\underset{h\neq i}{h=1}}^m \sigma_{i,r,h} \pf{i,r,h}{T} c_{r,i,\epsilon}c_{h,i,\beta}\\
= &z_\epsilon\sum_{\underset{r\neq i}{r=1}}^m \sum_{\underset{h\neq i}{h=1}}^m \sigma_{r,i,h} \pf{r,i,h}{T} c_{r,i,\epsilon}c_{h,i,\beta}\\
= &z_\epsilon d^i_{\beta,\epsilon}
\end{align*}
where the first equality follows from definition of $d_{\alpha,\beta}^i$, the second equality follows from \eqref{eq:Tji}, the third equality follows from separating the cases $s=\alpha$, $s=\beta$ and $s=\epsilon$, the fourth equality follows from cancellation, the fifth equality follows from \eqref{eq:Sig3Rel} and cancellation, the sixth equality follows from \eqref{eq:Sig3Rel}, and the seventh equality follows from definition of $d^i_{\beta,\epsilon}$.
The case $l=\beta$ follows similarly and is therefore omitted.
Hence the coefficient of $v_{\alpha,\beta}^i$ is $(-1)^{l+1}\delta_{i,j}d^i_{\phi,\psi}$ when $\{\phi,\psi,l\}=\{1,2,3\}$ and $\phi<\psi$.
\end{proof}

\bibliographystyle{amsplain}
\bibliography{biblio}

\providecommand{\bysame}{\leavevmode\hbox to3em{\hrulefill}\thinspace}
\providecommand{\MR}{\relax\ifhmode\unskip\space\fi MR }
\providecommand{\MRhref}[2]{%
  \href{http://www.ams.org/mathscinet-getitem?mr=#1}{#2}
}
\providecommand{\href}[2]{#2}
\begin{thebibliography}{10}

\bibitem{Enum}
Martin Aigner, \emph{A course in enumeration}, Graduate Texts in Mathematics,
  vol. 238, Springer, Berlin, 2007.

\bibitem{small}
Luchezar~L. Avramov, \emph{Small homomorphisms of local rings}, J. Algebra
  \textbf{50} (1978), no.~2, 400--453.

\bibitem{Avramov2012}
\bysame, \emph{A cohomological study of local rings of embedding codepth 3}, J.
  Pure Appl. Algebra \textbf{216} (2012), no.~11, 2489--2506.

\bibitem{AKM}
Luchezar~L. Avramov, Andrew~R. Kustin, and Matthew Miller, \emph{Poincar\'{e}
  series of modules over local rings of small embedding codepth or small
  linking number}, J. Algebra \textbf{118} (1988), no.~1, 162--204.

\bibitem{BuchEisen}
David~A. Buchsbaum and David Eisenbud, \emph{Algebra structures for finite free
  resolutions, and some structure theorems for ideals of codimension {$3$}},
  Amer. J. Math. \textbf{99} (1977), no.~3, 447--485.

\bibitem{CVExamples}
Lars~Winther Christensen and Oana Veliche, \emph{Local rings of embedding
  codepth 3. examples}, Algebras and Representation Theory \textbf{17} (2014),
  no.~1, 121--135.

\bibitem{trimming}
Lars~Winther Christensen, Oana Veliche, and Jerzy Weyman, \emph{Trimming a
  {G}orenstein ideal}, Journal of Commutative Algebra \textbf{11} (2019),
  no.~3, 325--339.

\bibitem{Linkage}
\bysame, \emph{Linkage classes of grade 3 perfect ideals}, J. Pure Appl.
  Algebra \textbf{224} (2020), no.~6, 106185, 29.

\bibitem{aci}
\bysame, \emph{Three takes on almost complete intersection ideals of grade 3},
  Commutative Algebra: Expository Papers Dedicated to David Eisenbud on the
  Occasion of his 75th Birthday, Springer, 2021, pp.~219--281.

\bibitem{Knuth}
Donald~E. Knuth, \emph{Overlapping {P}faffians}, Electron. J. Combin.
  \textbf{3} (1996), no.~2, Research Paper 5, approx. 13, The Foata
  Festschrift.

\bibitem{BlockDet}
John~R. Silvester, \emph{Determinants of block matrices}, The Mathematical
  Gazette \textbf{84} (2000), no.~501, 460–467.

\bibitem{DGTrimmed}
Keller VandeBogert, \emph{D{G} structure on length 3 trimming complexes and
  applications to tor algebras}, Journal of Pure and Applied Algebra
  \textbf{226} (2022), no.~9, 107053.

\bibitem{Weyman}
Jerzy Weyman, \emph{On the structure of free resolutions of length {$3$}}, J.
  Algebra \textbf{126} (1989), no.~1, 1--33.

\end{thebibliography}
\end{document}